\theoremstyle{plain}
\newtheorem{thm}{Theorem}[section]
\newtheorem{proposition}[thm]{Proposition}
\newtheorem{corollary}[thm]{Corollary}
\newtheorem{remark}[thm]{Remark}
\newtheorem{example}[thm]{Example}
\theoremstyle{definition}
\newtheorem{definition}[thm]{Definition}
\newcommand{\ind}{\mathbbm{1}}
\newcommand{\var}{\mathrm{Var}}
\newcommand{\re}{\mathrm{RE}}
\providecommand{\keywords}[1]{\textbf{\textbf{Keywords  }} #1}
\DeclarePairedDelimiter\floor{\lfloor}{\rfloor}
\begin{document}

\title{Improving control based importance sampling strategies \\for metastable diffusions via adapted metadynamics}
\date{September 2023}

\setcounter{Maxaffil}{0}
\renewcommand\Affilfont{\itshape\small}

\author[1,2]{Enric Ribera Borrell}
\author[2]{Jannes Quer}
\author[1,3,4]{Lorenz Richter}
\author[1,2]{Christof Sch\"utte}
\renewcommand\Authands{ and }

\affil[1]{Zuse Institute Berlin, 14195 Berlin, Germany}
\affil[2]{Institute of Mathematics, Freie Universität Berlin, 14195 Berlin, Germany}
\affil[3]{Institute of Mathematics, BTU Cottbus-Senftenberg, 03046 Cottbus, Germany}
\affil[4]{dida Datenschmiede GmbH, 10827 Berlin, Germany}

\maketitle

\begin{abstract}
Sampling rare events in metastable dynamical systems is often a computationally expensive task and one needs to resort to enhanced sampling methods such as importance sampling. Since we can formulate the problem of finding optimal importance sampling controls as a stochastic optimization problem, this then brings additional numerical challenges and the convergence of corresponding algorithms might suffer from metastabilty. In this article, we address this issue by combining systematic control approaches with the heuristic adaptive metadynamics method. Crucially, we approximate the importance sampling control by a neural network, which makes the algorithm in principle feasible for high-dimensional applications. We can numerically demonstrate in relevant metastable problems that our algorithm is more effective than previous attempts and that only the combination of the two approaches leads to a satisfying convergence and therefore to an efficient sampling in certain metastable settings.
\end{abstract}

\keywords{importance sampling, stochastic optimal control, rare event simulation, metastability, neural networks, metadynamics}

\setlength\parindent{0pt}

\section{Introduction}
\label{sec: introduction}
The accurate computation of rare events is of great importance in multiple applications, relating to fields such as molecular dynamics, epidemiology, engineering, and finance, to name just a few. One is typically interested in events that happen only very rarely, but are still relevant for certain phenomena of interest. Since analytical computations are mostly infeasible in practice, one usually relies on Monte Carlo approximations for the desired quantities. The related sampling problem, however, can be very challenging mainly for two reasons: a potentially high dimension of the problem at hand as well as large statistical errors of corresponding estimators, which are rooted in the characteristic of the events being rare. Loosely speaking, the difficulty of sampling rare events is based on its very definition: it is hard to observe an event (frequently) if it almost never appears (at least in relation to the typical timescales for which a simulation is feasible). In fact, the characteristic exponential divergence of the relative error with the parameter that controls the rarity of the quantity of interest poses great computational challenges. \\

In this article we shall focus on rare events in stochastic processes, where one is interested in sampling regions of the state space which are unlikely to be visited. In particular, we are interested in processes that exhibit some sort of metastability, where particles that follow the dynamics stay in certain regions of the space for a very long time. In fact, the average waiting of switching between metastable events is orders of magnitude longer than the timescale of the process itself. This is, for instance, typical in molecular simulations with particles following the Langevin dynamics in which a potential function governs the evolution of the stochastic process; see, e.g., \cite{Hartmann2014characterization}. Here, metastable regions correspond to local minima of the potential, which are separated by so-called energy barriers, and transitions between those regions are of interest since they correspond to macroscopic properties of corresponding molecules. These are, for instance, reaction rates or conformation changes, such as the folding of a protein or a phase transition. However, those transitions happen only very rarely so that a simulation of transition trajectories can be extremely difficult from a computational point of view. On the one hand, the time to overcome energy barriers might be extremely large (in fact, it scales exponentially with the height of the energy barrier \cite{Berglund2013}); on the other hand, variances of estimators related to those rare transitions might be large\footnote{Note that those two aspects usually interact.}.\\

One idea to overcome those challenges is to apply importance sampling. Abstractly speaking, the basic idea is to sample from another probability distribution and weight the resulting random variables back in order to still get an unbiased estimator for the quantity of interest. Since we are interested in path dependent quantities we consider importance sampling in the space of continuous trajectories. This corresponds to adding a function to the drift of original dynamics. One can think of the additional function as a control function or force that pushes trajectories into desired regions of the state space and thereby allows for overcoming possible energy barriers. Equivalently, one can think of modifying the original physical potential such that it appears less ``rugged'' and particles are no longer trapped in local minima. In principle, it is possible to design modifications of the potential rather freely. However, one has to keep in mind that these modifications influence the quality of the importance sampling estimator significantly; see, e.g., \cite{Hartmann2021nonasymptotic}. A systematic approach for finding good control functions that aim to minimize the variance of the estimator is related to a stochastic optimal control problem \cite{Hartmann2012} (for further variational perspectives we refer to \cite{Nusken2021solving}). This perspective then allows for numerical strategies such as iterative stochastic optimization methods that aim to find efficient controls in practice. At the same time, especially in metastable situations, those approaches hold two additional challenges that might make corresponding algorithms infeasible in applications:
\begin{itemize}
    \item In order to compute a first iteration in the stochastic optimization procedure the rare event of interest must be simulated at least once. If this does not happen, one can usually not proceed.
    \item Even if one manages to simulate rare events with great computational effort, the estimated objectives in the optimization routines as well as their gradients might suffer from high variances, which might make convergence of the method very slow. 
\end{itemize}

In this article we develop an algorithm that addresses these two aspects and improves importance sampling based estimation in metastable scenarios. In particular, we will combine systematic control based approaches with heuristic adaptive methods that are related to the so-called metadynamics algorithm.

\subsection{Previous work}
We have mentioned before that rare event sampling occurs in multiple different fields of application, where each field adds a different perspective. In what follows, let us review some of those perspectives and relate them to works that are relevant for our endeavor.

\paragraph{Adaptive biasing techniques}
Methods that aim to modify the potential on the fly in order to remove metastable features of the dynamics depending on the particles in the simulation are often subsumed under the term \textit{adaptive biasing techniques}. A well-known method is called metadynamics \cite{Laio2002}, which was developed in order to improve the sampling related to stationary distributions of complex molecular systems. Many extensions and applications have been published throughout the last few years. For a good review on recent developments we refer to \cite{Chipot2015,Valsson2016} and the references therein. Convergence results and high-dimensional adaptations can be found in \cite{Galvelis2017, Jourdain2021}. An extension to importance sampling for path dependent properties of interest has been proposed in \cite{Quer2018} and similar ideas based on the adaptive biasing force technique have been suggested by \cite{Wang2001}. This method has been used in many applications and different extensions have, for instance, been proposed in \cite{Bussi2006, Henin2004}. To our knowledge, for the adaptive biasing force methods no extension for path dependent quantities has been considered yet. Let us also note that related nonequilibrium methods have been addressed (see, e.g., \cite{Kumar1992} or \cite{Zwanzig1954}), but as before, an extension to path dependent problems is usually not covered.

\paragraph{Rare event sampling in an asymptotic regime}
Many methods for rare event estimation have been developed in an asymptotic regime, relying usually on large deviation arguments. Those strategies are often connected to the associated Hamilton--Jacobi--Bellman equations and can, for instance, be found in \cite{Dupuis2004, Dupuis2007, Dupuis2012}. For variance reduction strategies in a zero noise limit we refer to \cite{Weare2012}, which relies on optimal control strategies of the corresponding deterministic problem. The special situation of attractors with resting points has been addressed in \cite{Dupuis2015}, and in \cite{Spiliopoulos15} the variance of importance sampling based on asymptotic arguments applied in a nonasymptotic regime has been analyzed. Even though all of these methods can be applied to path dependent quantities, an application to high-dimensional applications is usually not addressed.

\paragraph{Nonasymptotic importance sampling}
Importance sampling in a nonasymptotic regime, which targets sampling path dependent quantities, corresponds to a controlled stochastic process; see, e.g., \cite{Milstein1995}. A strategy that aims to identify optimal importance sampling controls has been suggested in \cite{Hartmann2012}. Numerically, the approach rests on the approximation of the control by a linear combination of ansatz functions. In \cite{Hartmann2017} the corresponding method is analyzed from the perspective of path space measures and variational formulations of the problem are considered. Further variational perspectives have been suggested in \cite{Nusken2021solving}, putting additional emphasis on certain numerical robustness properties and allowing for high-dimensional applications by modeling the control with neural networks. For strategies that are based on backward stochastic differential equations we refer, for instance, to \cite{Hartmann2019variational}. The optimal control attempt has also been combined with model reduction techniques in \cite{Hartmann2018, Hartmann2016} and \cite{Zhang2016}, noting that one of the main drawbacks of this approach is the placing of ansatz functions over the domain of interest. For a statistical analysis of importance sampling in path space that highlights its nonrobustness in particular in high dimensions we refer to \cite{Hartmann2021nonasymptotic}. We also refer to \cite{Richter2021phd} for a comprehensive introduction to nonasymptotic importance sampling for path functionals.

\paragraph{Optimal control problems}
Due to the connections of (optimal) importance sampling and optimal control theory we refer to numerical strategies that allow us to solve (high-dimensional) control problems. One strategy is to solve the related Hamilton--Jacobi--Bellman equation, which can, for instance, be tackled with deep learning based strategies in high dimensions; see, e.g., \cite{Han2017deep, Nusken2021solving, Weinan2021algorithms, Zhou2021actor}. Let us in particular refer to \cite{Nusken2021interpolating}, where elliptic partial differential differential equations (PDEs) are considered, which are relevant for the problems we focus on in this article. Let us further highlight \cite{Nusken2021solving}, where robustness properties of loss functions have been analyzed, leading in particular to the novel log-variance divergence, which exhibits favorable numerical properties. For approximating control functions with tensor trains we refer, for instance, to \cite{Fackeldey2022approximative}.

\subsection{Outline of the article}
The article is structured as follows. In \Cref{sec: sampling metastable dynamics} we state the rare event problem and discuss issues appearing in naive Monte Carlo estimations. In \Cref{sec: importance sampling} we introduce importance sampling as a strategy to overcome those issues and in \Cref{sec: bvp} we subsequently show how one can aim for optimal importance sampling strategies by deriving an equivalent optimal control problem via PDE arguments. In \Cref{sec: numerical strategies} we then address computational aspects of solving this control problem via an optimization approach. In particular, \Cref{sec: gradient computations} is devoted to the computation of gradients that are needed in iterative optimization methods, \Cref{sec: metadynamics} introduces the metadynamics based initialization method and \Cref{sec: control function approximation} discusses the approximation of the control functions via neural networks, which then allows us to pose our final algorithm. In \Cref{sec: numerical examples} we subsequently demonstrate in different numerical examples that the algorithm can significantly improve sampling performance in high-dimensional metastable scenarios. Finally, \Cref{sec: summary and outlook} provides a conclusion and an outlook for further research questions. For the proofs and additional statements we refer to \Cref{app: proofs,app: alternative gradient computations,app: girsanov}.

\section{Sampling metastable dynamics}
\label{sec: sampling metastable dynamics}
In this article we focus on stochastic dynamical systems which exhibit metastable features. To be precise, we consider the overdamped Langevin equation
\begin{equation}
\label{eq: langevin sde}
\mathrm d X_s = -\nabla V(X_s) \mathrm ds + \sigma(X_s)\mathrm dW_s, \qquad X_0 = x \in \mathbb{R}^d, 
\end{equation}
on a bounded domain $\mathcal{D} \subset \mathbb{R}^d$, where $(W_s)_{s \geq 0}$ is a $d$-dimensional Brownian motion. The function $V:\mathbb{R}^d \to \mathbb{R}$ shall be understood as a potential that, for instance, governs the dynamics of multiple atoms in a physical system, and for the diffusion coefficient we usually choose $\sigma(x) = \sqrt{2 \beta^{-1}} \operatorname{Id}$, where $\beta > 0$ denotes the inverse temperature\footnote{Note that in principle both the potential and the diffusion coefficient could be made time-dependent as well.}. We assume that there exists a unique strong solution to SDE \eqref{eq: langevin sde} and that the resulting process $X$ is ergodic such that we can guarantee convergence to a unique equilibrium distribution; see, e.g.,  \cite{Lelievre2016} or \cite{Pavliotis2014} for details on these assumptions.

Given a target set $\mathcal{T} \subset \mathcal{D}$, let us define the first hitting time 
of the process $X$ as
\[
\tau \coloneqq \inf \{s > 0 \mid X_s \in \mathcal{T}\},
\]
and note that it is a.s. finite\footnote{Let $\widetilde{\rho}(y, t | x)$ be the probability distribution of the particles which have not arrived in $\mathcal{T}$ yet, given the position $x$ and assuming the time evolution $t$. Then, the homogeneous (absorbing) boundary condition for the mean first hitting time implies that $\lim\limits_{t \rightarrow \infty} \widetilde{\rho}(y, t | x) = 0$ for all $y \in \mathbb{R}^d$, which means that the particles will eventually leave the domain and therefore $\tau < \infty$ holds almost surely \cite{Pavliotis2014}.}. We can now define our quantity of interest $I: C([0, \infty), \mathbb{R}^d) \rightarrow \mathbb{R}$ as
\begin{equation}
\label{eq: quantity of interest}
    I(X) \coloneqq \exp{(- \mathcal{W}(X))}, 
\end{equation}
with the path functional $\mathcal{W}: C([0, \infty), \mathbb{R}^d) \rightarrow \mathbb{R}$ given as
\begin{equation}
 \label{eq: work functional}
\mathcal{W}(X) \coloneqq \int_0^\tau f(X_s) \mathrm ds + g(X_\tau),
\end{equation}
where $f: \mathbb{R}^d  \to \mathbb{R}$ and $g: \mathbb{R}^d \to \mathbb{R}$ are such that $\mathcal{W}$ is integrable. Our goal is to compute the expectation value of the quantity of interest $I$,
\begin{equation}
\label{eq: psi}
\Psi(x) \coloneqq \mathbb{E}^{x}[I(X)],
\end{equation}
which we can view as a function of the initial value $x$, where we introduce the shorthand notation \sloppy $\mathbb{E}^{x}[I(X)] \coloneqq \mathbb{E}[I(X) \mid X_0=x]$. Let us recall that a stochastic process, such as the one defined in \eqref{eq: langevin sde}, is metastable if its dynamic behavior is characterized by unlikely transition events between the so-called metastable regions. In the particular case of an overdamped Langevin process, one can distinguish between two types of metastability, coming from either energetic or entropic barriers \cite{Lelievre2016}. In this article we focus on the former. Let us recall that in this case both the temperature $\beta^{-1}$ and the height of the energetic barriers determine the strength of the metastability \cite{Berglund2013}. In particular, by Kramer's law the mean hitting time satisfies the large deviations asymptotics 
\begin{equation}
\label{eq: exponential dependence hitting time}
    \mathbb{E}[\tau]\asymp\exp\left( \frac{2\Delta V}{\beta}\right) \qquad \text{as} \qquad \beta \to 0,
\end{equation}
where $\Delta V$ is the energy barrier that the dynamics has to overcome in order to reach the target set $\mathcal{T}$. An illustration of this exponential dependency is provided in \Cref{ex: double well}.

\begin{example}[Double well potential]
\label{ex: double well}
For an illustration, let us consider the one-dimensional double well potential
\begin{equation}
    V_\alpha(x) = \alpha (x^2-1)^2,
\end{equation}
where $\alpha > 0$ modulates the height of the energetic barrier and thereby influences the strength of the metastability; see \Cref{fig: 1a}. Let us consider the initial value $x=-1$ in the left well of the potential. We choose the target set $\mathcal{T} = [1, 3]$ to be supported in the right well so that the particles need to cross the potential barrier. In \Cref{fig: 1b} we plot the expected hitting time of reaching $\mathcal{T}$ for different values of $\alpha$ and $\beta$ when using naive Monte Carlo estimations. Indeed we observe the exponential dependence as indicated by \eqref{eq: exponential dependence hitting time}. As mentioned above, one can aim to speed up sampling by reducing the trajectory lengths when applying an importance sampling based sampling scheme. It turns out that there is an optimal way to design such a scheme, leading to substantially reduced mean hitting times which do not scale exponentially with the energy barrier anymore. We will show how to design this optimal scheme in the upcoming sections.

\begin{figure}[tbhp]
\centering
\subfloat[]{\label{fig: 1a}\includegraphics[width=60mm]{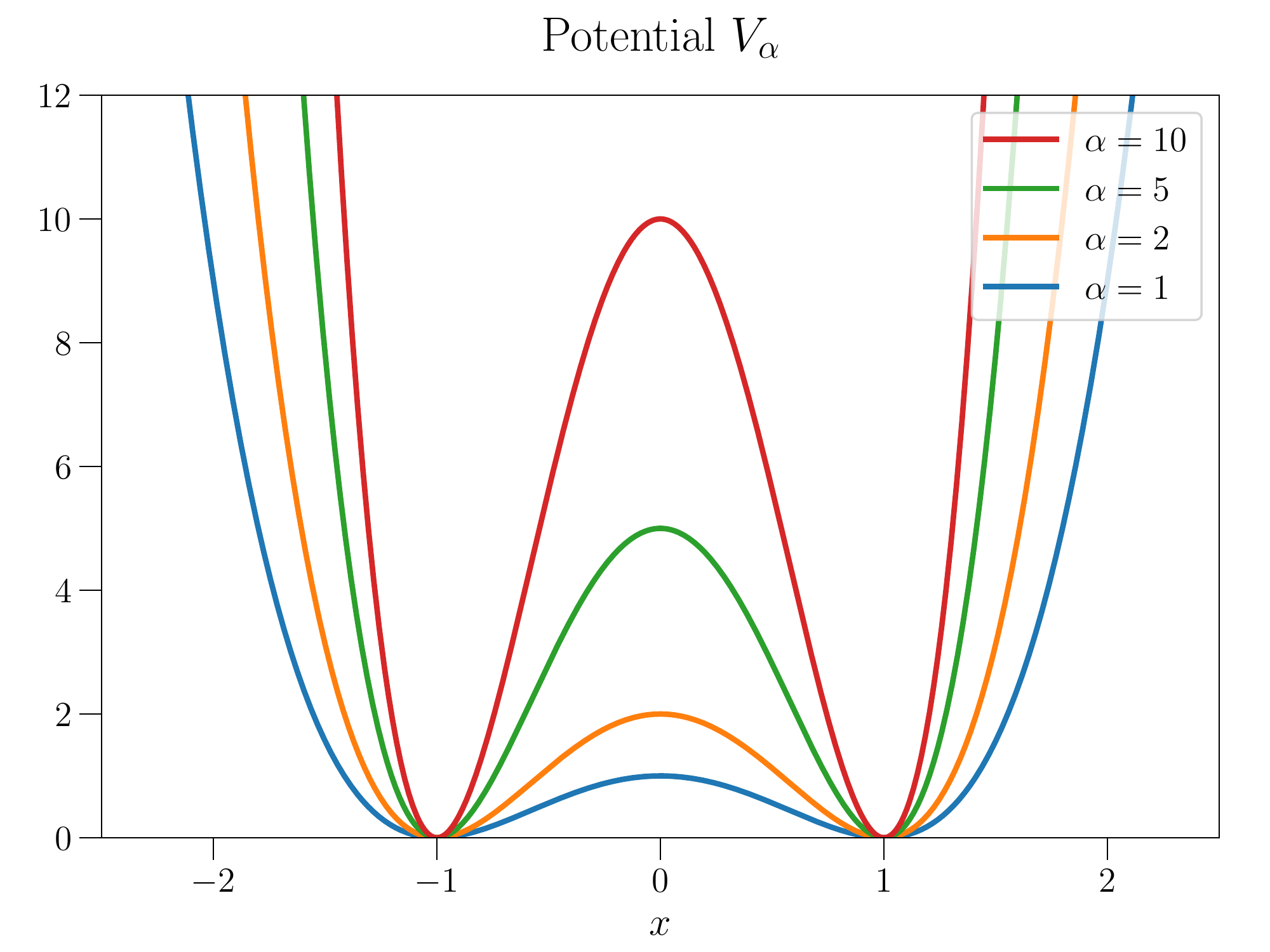}}
\subfloat[]{\label{fig: 1b}\includegraphics[width=60mm]{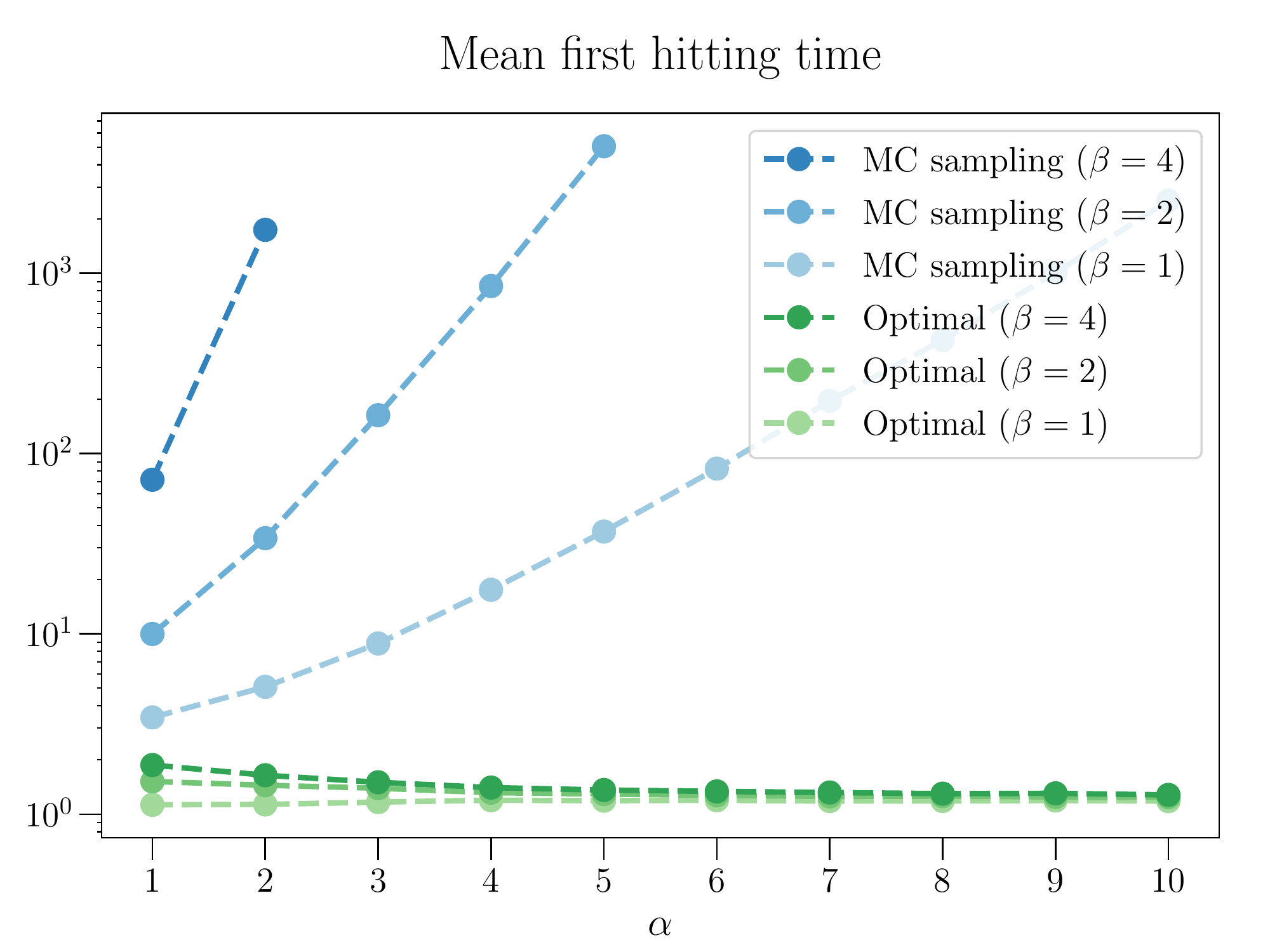}}
\caption{\textbf{(a)} The double well potential for different values of $\alpha$. \textbf{(b)} Mean first hitting times for different values of $\alpha$ and $\beta$.}
\label{fig: double well example hitting times}
\end{figure}
\end{example}

\subsection{Monte Carlo approximations and importance sampling}
\label{sec: importance sampling}
Since there is no closed-form formula available for the computation of the expectation value \eqref{eq: psi}, we must rely on its Monte Carlo estimator
\begin{equation}
\label{eq: mc estimator}
\widehat{\Psi}(x) \coloneqq \frac{1}{K} \sum_{k=1}^{K} I(X^{(k)}) , 
\end{equation}
where $\{ X^{(k)} \}_{k=1}^K$ are independent realizations of the process $X$, all starting at $x$. For a finite sample size $K$ the estimator is unbiased and the usual behaviors of the variance and the relative error hold, i.e.,
\begin{equation}
\var\left(\widehat{\Psi}(x)\right) = \frac{\var(I(X))}{K}, \qquad 
\re(\widehat{\Psi}) \coloneqq \frac{\sqrt{\var\left(\widehat{\Psi}(x)\right)}}{\mathbb{E}^x\left[I(X)\right]}
= \frac{\re(I(X))}{\sqrt{K}}
\end{equation}
for any $x \in \mathbb{R}^d$. In a metastable system the intrinsic relative error of the quantity of interest, $\re(I(X))$, can be very large. As a consequence, reducing the relative error of the Monte Carlo estimator below a prescribed positive value $\varepsilon > 0$, i.e., $\re(\widehat{\Psi}) \leq \varepsilon$, might imply that one needs a very large number of trajectories, namely $K \geq (\re(I(X)) / \varepsilon)^2$. Thus, in order to make numerical estimations feasible, one often needs to rely on methods that reduce the inherent variance of the corresponding stochastic quantities. One such method is importance sampling, on which we shall focus in what follows.

The general idea of importance sampling is to draw random variables from another probability measure and subsequently weight them back in order to still have an unbiased estimator of the desired quantity of interest \cite{Owen2013}. In the case of stochastic processes this change of measure corresponds to adding a control to the original process \eqref{eq: langevin sde}, yielding the controlled dynamics 
\begin{equation}
\label{eq: controlled langevin sde}
\mathrm dX_s^u = (-\nabla V(X_s^u) + \sigma(X_s^u)  \,  u(X_s^u))\mathrm ds + \sigma(X_s^u) \mathrm dW_s, \qquad X_0^u = x, 
\end{equation}
where the control $u$ is an It\^o integrable function that satisfies a linear growth condition, i.e., $u \in \mathcal{U}$ with
\[
\mathcal{U} = \{ u \in C^1(\mathbb{R}^d, \mathbb{R}^d) : u \text{ grows at least linearly in $x$} \}. 
\]
Further details can be found in, e.g., \cite{Hartmann2021nonasymptotic, Hartmann2017, Hartmann2012, Nusken2021solving}. The controlled dynamics \eqref{eq: controlled langevin sde} can now be related to the original one \eqref{eq: langevin sde} via a change of measure in path space, which can be made explicit via Girsanov's formula (see \Cref{app: girsanov} for details). To be precise, it holds that
\begin{equation}
\label{eq: expectation IS}
    \mathbb{E}^x\left[I(X)\right] = \mathbb{E}^x\left[I(X^u) M^u\right],
\end{equation}
where the exponential martingale
\begin{equation}
\label{eq: girsanov martingale}
M^u \coloneqq \exp{\left(-  \int_0^{\tau^u} u(X_s^u) \cdot \mathrm dW_s - \frac{1}{2} \int_0^{\tau^u} |u(X_s^u)|^2 \mathrm ds \right)}
\end{equation}
corrects for the induced bias.

Relating to \Cref{ex: double well}, the control $u \in \mathcal{U}$ can intuitively be understood as an external force aiming to push particles over the energy barrier such that they can escape from metastable regions and reach desired target sets. In principle, the importance sampling relation \eqref{eq: expectation IS} stays intact for any $u\in \mathcal{U}$; however, it turns out that the variance of corresponding estimators significantly depends on an appropriate choice of $u$; see \cite{Hartmann2021nonasymptotic}. In particular, it does not suffice to somehow push particles over existing barriers -- instead, the specific control protocol needs to be chosen very carefully. Clearly, a natural goal for designing an optimal control $u^* \in \mathcal{U}$ is to aim for minimizing the variance of the importance sampling estimator, i.e.,
\begin{equation}
\label{eq: variance minimization}
\var \left( I(X^{u^*}) M^{u^*} \right) = \inf_{u \in \mathcal{U}} \left\{ \var(I(X^u) M^u) \right\}.
\end{equation}
In the next section we shall discuss how this objective can in fact be linked to a classical optimal control problem, which will subsequently lead to feasible numerical strategies.
\pagebreak

\subsection{Optimal control characterizations and associated boundary value problems}
\label{sec: bvp}

In order to derive the connection between variance minimization as stated in \eqref{eq: variance minimization} and a classical optimal control problem, we will essentially argue via PDEs that are associated to our estimation problem\footnote{Note that an alternative derivation can be achieved via certain divergences between path space measures; see \cite{Nusken2021solving}.}. Let us first recall via the Feynman--Kac theorem \cite[Proposition 6.1]{Lelievre2016} that the expectation $\Psi$ (considered as a function of the initial value), as defined in \eqref{eq: psi}, fulfills the elliptic boundary value problem
\begin{subequations}
\label{eq: bvp psi}
\begin{align}
 (L  - f(x)) \Psi(x) &= 0, & x \in \mathcal{S}, \\ 
\Psi(x) &= \exp{(- g(x))}, & x \in \partial \mathcal{S},
\end{align}
\end{subequations}
where $L$ is the infinitesimal generator of the process $X$, defined as
\begin{equation*}
L = \frac{1}{2} \sum_{i,j=1}^d (\sigma \sigma^\top)_{ij}(x) \frac{\partial^2}{\partial x_i \partial x_j} - \sum_{i=1}^d \frac{\partial}{\partial x_i}V(x) \frac{\partial}{\partial x_i}.
\end{equation*}
The domain $\mathcal{S} \coloneqq \mathcal{D} \cap \mathcal{T}^c$ is assumed to be bounded and the functions $f \in C(\mathbb{R}^d, \mathbb{R})$, $g \in C^2(\mathbb{R}^d, \mathbb{R})$ are the same as in \eqref{eq: work functional}.

The connection of our estimation problem to an optimal control problem can be revealed when applying the Hopf--Cole transformation (see, e.g., section 4.4.1 in \cite{Evans2010}, and cf. \cite{Hartmann2012}) to the solution of the PDE \eqref{eq: bvp psi}, namely
\begin{equation}
\label{eq:cole:hopf}
\Phi(x) = - \log{ \Psi(x)}.
\end{equation}
One can readily show that $\Phi$ now fulfills the nonlinear boundary value problem
\begin{subequations}
\label{eq: bvp value function}
\begin{align}
L \Phi(x) - \frac{1}{2} |\sigma^\top \nabla \Phi(x)|^2 + f(x) &= 0,  & x \in \mathcal{S}, \\ 
\Phi(x) &= g(x), & x \in \partial \mathcal{S}.
\end{align}
\end{subequations}

The PDE \eqref{eq: bvp value function} is known as the Hamilton--Jacobi--Bellman (HJB) equation, which is a key equation in optimal control theory allowing for a characterization of optimal control strategies. In fact, we can now identify the control problem that corresponds to the above PDE and thus to our estimation problem by stating the cost functional
\begin{equation}
\label{eq: cost functional}
J(u; x) \coloneqq \mathbb{E}^x\left[\mathcal{W}(X^u) + \frac{1}{2} \int_0^{\tau^u} |u(X_s^u)|^2 \mathrm ds \right],
\end{equation}
where $X^u$ follows the controlled dynamics as defined in \eqref{eq: controlled langevin sde} and $f$ and $g$ can be interpreted as running and terminal costs, respectively. The solution to PDE \eqref{eq: bvp value function} is sometimes called the value function in the sense that it offers the optimal \textit{costs-to-go}, depending on the initial value $x$, i.e.,
\begin{equation}
    \Phi(x) = \inf_{u \in \mathcal{U}} J(u; x).
\end{equation}
Let us make the above observations precise.

\begin{proposition}[Variance minimization as control problem]
\label{prop: variance control correspondence}
Let us assume there exist solutions \sloppy ${\Psi \in C^2_b(\mathbb{R}^d, \mathbb{R})}$ and $\Phi \in C^2_b(\mathbb{R}^d, \mathbb{R})$ to the elliptic boundary value problems \eqref{eq: bvp psi} and \eqref{eq: bvp value function}, respectively, and set
\begin{equation}
\label{eq: control characterization}
u^* = -\sigma^\top \nabla \Phi  = \sigma^\top \nabla \log \Psi.
\end{equation}
Then the following are equivalent:
\begin{enumerate}[(i)]
    \item $u^* \in \mathcal{U}$ minimizes the control costs as defined in \eqref{eq: cost functional}.
    \item $u^* \in \mathcal{U}$ minimizes the variance of the importance sampling estimator as defined in \eqref{eq: variance minimization}. 
\end{enumerate}
In fact it holds that
\begin{equation}
    \var \left( I(X^{u^*}) M^{u^*} \right) = 0.
\end{equation}
\end{proposition}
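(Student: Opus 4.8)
The plan is to establish the strong statement that the importance sampling estimator $I(X^{u^*})M^{u^*}$ equals the deterministic number $\Psi(x)$ almost surely; the zero-variance claim and the minimality~(ii) then follow for free, while the control optimality~(i) is obtained by a closely related but slightly more delicate It\^o argument.

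Concretely, for an arbitrary admissible control $u \in \mathcal{U}$ I would introduce the process
\begin{equation*}
Z^u_s \coloneqq \Psi(X^u_s)\exp\!\left(-\int_0^s f(X^u_r)\,\mathrm dr - \int_0^s u(X^u_r)\cdot\mathrm dW_r - \tfrac12\int_0^s |u(X^u_r)|^2\,\mathrm dr\right),
\end{equation*}
for which $Z^u_0 = \Psi(x)$ and, by the boundary condition $\Psi = \exp(-g)$ on $\partial\mathcal{S}$ in~\eqref{eq: bvp psi}, $Z^u_{\tau^u} = I(X^u)M^u$. Applying It\^o's formula to $\log Z^u_s = -\Phi(X^u_s) - \int_0^s f(X^u_r)\,\mathrm dr - \int_0^s u(X^u_r)\cdot\mathrm dW_r - \tfrac12\int_0^s|u(X^u_r)|^2\,\mathrm dr$ along the controlled dynamics~\eqref{eq: controlled langevin sde} (using $L\Phi = \tfrac12\sum_{ij}(\sigma\sigma^\top)_{ij}\partial_i\partial_j\Phi - \nabla V\cdot\nabla\Phi$) and re-exponentiating, a short computation shows that the drift of $Z^u$ equals $-Z^u_s\big(L\Phi - \tfrac12|\sigma^\top\nabla\Phi|^2 + f\big)(X^u_s)$, which vanishes by the HJB equation~\eqref{eq: bvp value function}, while the martingale part is $-Z^u_s\,(\sigma^\top\nabla\Phi(X^u_s) + u(X^u_s))\cdot\mathrm dW_s$. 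Hence $Z^u$ is a local martingale for every admissible $u$, and for the specific choice $u = u^* = -\sigma^\top\nabla\Phi$ both the drift and the diffusion coefficient vanish identically, so $Z^{u^*}$ is constant along every path; evaluating at $s=0$ and $s=\tau^{u^*}$ gives $I(X^{u^*})M^{u^*} = \Psi(x)$ almost surely. This is the zero-variance statement, and since variances are nonnegative it shows that $u^*$ attains the infimum in~\eqref{eq: variance minimization}, i.e.~(ii).

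For~(i) and the equivalence I would instead apply It\^o's formula to $\Phi(X^u_s)$ directly, integrate over $[0,\tau^u]$, take expectations (the stochastic integral has vanishing expectation, see below), and use the terminal condition $\Phi = g$ on $\partial\mathcal{S}$ together with $L\Phi = \tfrac12|\sigma^\top\nabla\Phi|^2 - f$ from~\eqref{eq: bvp value function} to obtain, after comparing with~\eqref{eq: cost functional},
\begin{equation*}
J(u;x) - \Phi(x) = \tfrac12\,\mathbb{E}^x\!\left[\int_0^{\tau^u} \big|u(X^u_s) + \sigma^\top\nabla\Phi(X^u_s)\big|^2\,\mathrm ds\right] \geq 0 ,
\end{equation*}
with equality exactly for $u = u^*$. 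Thus $\Phi(x) = J(u^*;x) = \inf_{u\in\mathcal{U}} J(u;x)$, so $u^*$ minimizes the control costs~\eqref{eq: cost functional}; together with the previous paragraph both~(i) and~(ii) hold for this $u^*$ and are therefore equivalent.

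The main obstacle is the integrability/localization bookkeeping in the second computation: a priori $s\mapsto\int_0^s\sigma^\top\nabla\Phi(X^u_r)\cdot\mathrm dW_r$ is only a local martingale, so to kill its expectation I would stop at the exit times of large balls and pass to the limit, invoking the standing hypotheses $\Psi,\Phi\in C^2_b(\mathbb{R}^d,\mathbb{R})$, boundedness of $\mathcal{S}$ and of $\sigma$ on it, integrability of $\mathcal{W}$, and a.s.\ finiteness of $\tau^u$ (and of $\tau^{u^*}$, which uses $u^*\in\mathcal{U}$) to justify the passage via dominated/monotone convergence. Note that the first step needs no such care, since a continuous semimartingale whose drift and quadratic variation both vanish is constant along every path.
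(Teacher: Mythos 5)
Your argument is correct and is essentially the standard verification/It\^o computation; the paper itself does not prove this statement but simply cites \cite[Theorem~2]{Hartmann2017} and \cite[Theorem~2.2]{Nusken2021solving}, which establish it by the same route (It\^o's formula applied to $\Phi(X^u_s)$ and to the reweighted observable, combined with the HJB identity). The one point worth making fully explicit is the integrability needed to kill the stochastic-integral expectation, namely $\mathbb{E}^x[\tau^u]<\infty$ together with boundedness of $\sigma^\top\nabla\Phi$ on the bounded domain $\mathcal{S}$ (the former follows for uniformly elliptic diffusions on bounded domains and is implicit in $J(u;x)<\infty$); you flag this and your proposed localization is the right fix, so there is no gap.
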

\begin{proof}
See, e.g., \cite[Theorem 2]{Hartmann2017} or \cite[Theorem 2.2]{Nusken2021solving}.
\end{proof}

\Cref{prop: variance control correspondence} shows that $u^*$, which minimizes either \eqref{eq: variance minimization} or \eqref{eq: cost functional}, can be recovered from the solution of the HJB equation. This reveals that the optimal control is in fact of gradient form, just as the drift of our original stochastic process \eqref{eq: langevin sde}. We can therefore express the overall drift of the optimally controlled process as
\begin{equation}
\label{eq: optimal potential}
    -\nabla (V + V_\text{bias}^*),
\end{equation}
where $V_\text{bias}^* = \sigma \sigma^\top \Phi$ is sometimes called the optimal bias potential, which can be interpreted as being the optimal correction of the original potential $V$ in terms of variance reduction. As stated in \Cref{prop: variance control correspondence}, one can show that it is optimal in the sense that it drives the variance of the importance sample estimator to zero, thereby yielding a perfect sampling scheme. \\
Let us illustrate this by referring again to \Cref{ex: double well}, where we have considered a stereotypical double well potential with different energetic barriers depending on the parameter $\alpha > 0 $. In \Cref{fig: optimal control and bias potentials} we display the optimal control functions and optimal bias potentials, respectively, that allow the trajectories to cross the barrier -- note that the control is particularly large in regions where the particles get trapped when not applying the control. The optimal solutions are calculated via a finite difference discretization of the corresponding PDE \eqref{eq: bvp psi}. In \Cref{fig: statistics with optimal control} we display the resulting Monte Carlo estimators and corresponding relative errors when using either naive Monte Carlo or the optimal importance sampling estimator. Note that the estimators are indeed much more accurate when relying on the optimal importance sampling control. We do not observe a zero relative error\footnote{The observation that the relative error for the naive Monte Carlo estimator seems to increase with decreasing $\Delta t$ is misleading and seems to be due to the fact that hitting times can be simulated more accurately, which leads to smaller values of $\widehat{\Psi}$ and therefore larger relative errors; see also \Cref{fig: 3a}.} due to the discretization of the process with different step sizes $\Delta t >0$; see also \Cref{sec: numerical examples}.

\begin{figure}[tbhp]
\centering
\subfloat[]{\label{fig: 2a}\includegraphics[width=60mm]{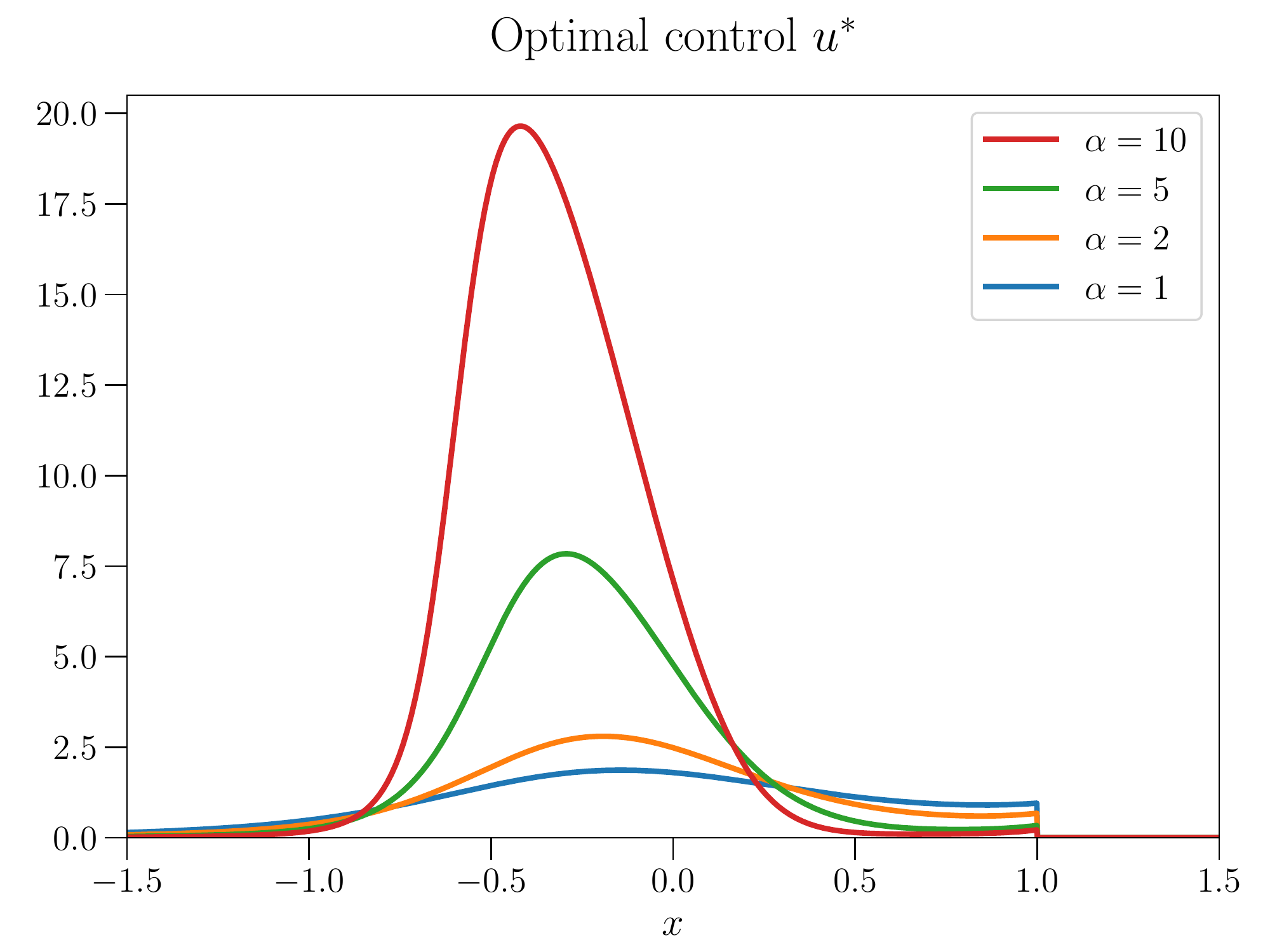}}
\subfloat[]{\label{fig: 2b}\includegraphics[width=60mm]{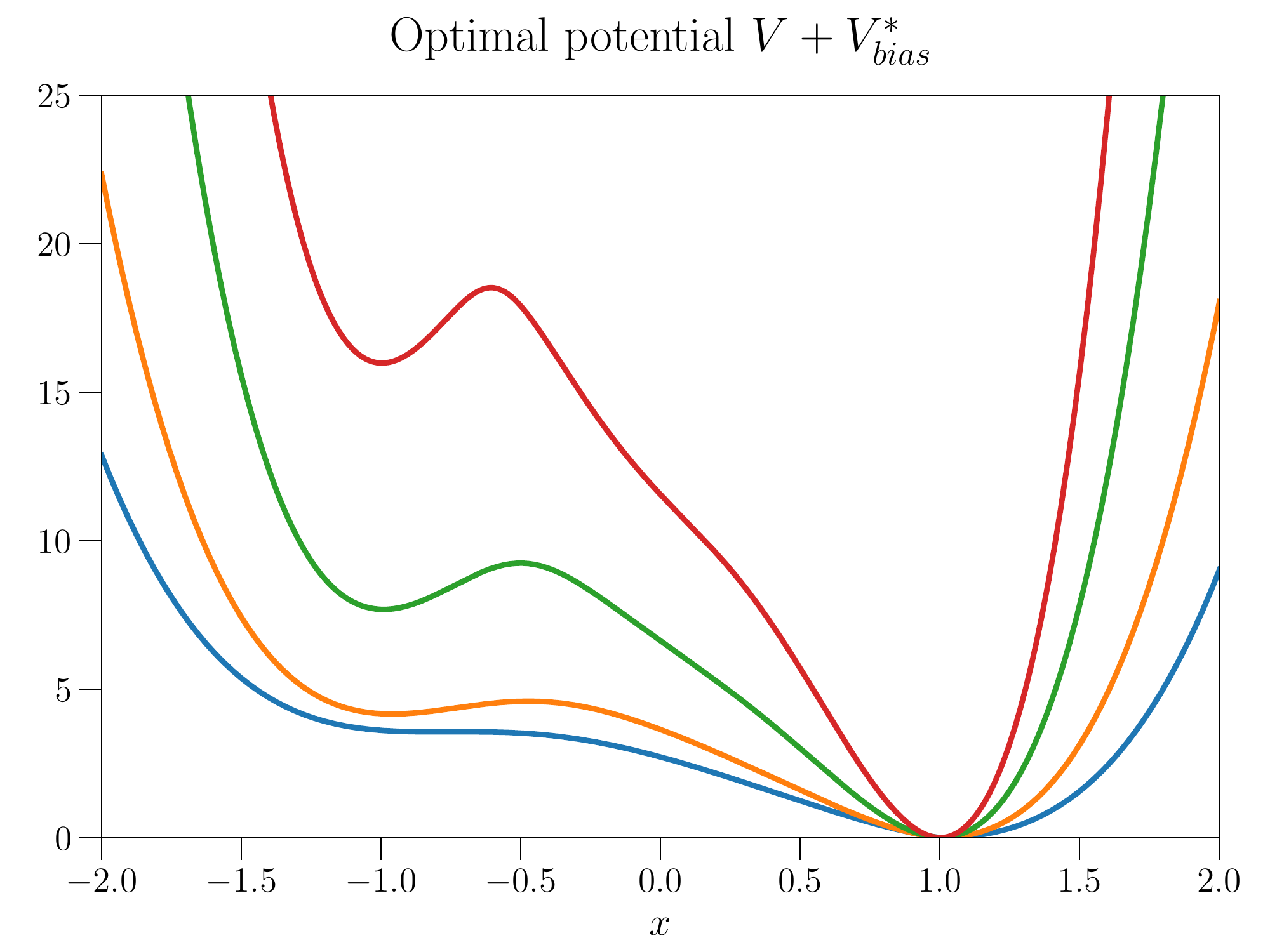}}
\caption{\textbf{(a)} Optimal control functions and \textbf{(b)} optimal potentials for different values of $\alpha$ and inverse temperature $\beta=1$; see also \Cref{ex: double well}.}
\label{fig: optimal control and bias potentials}
\end{figure}

\begin{figure}[tbhp]
\centering
\subfloat[]{\label{fig: 3a}\includegraphics[width=60mm]{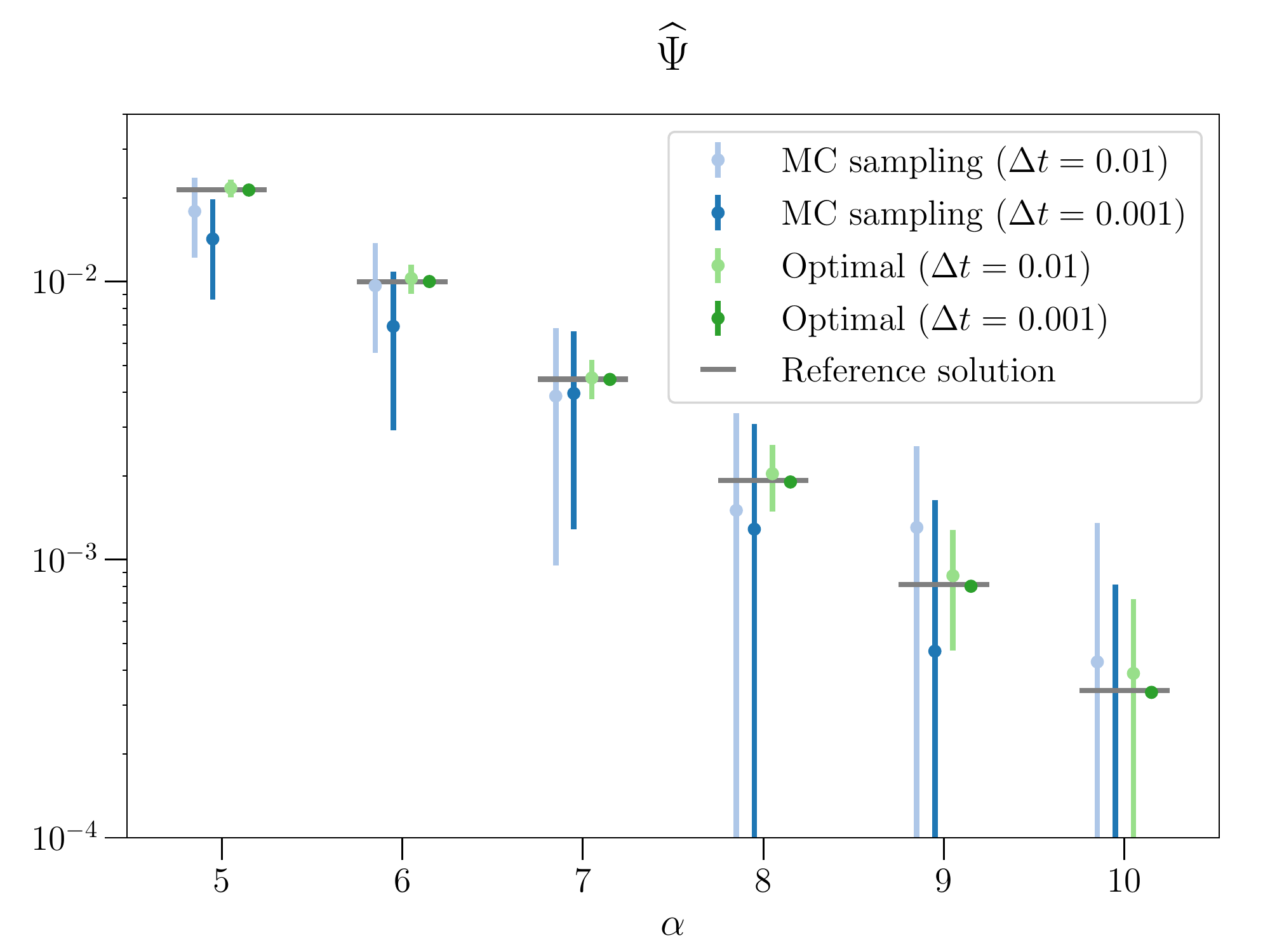}}
\subfloat[]{\label{fig: 3b}\includegraphics[width=60mm]{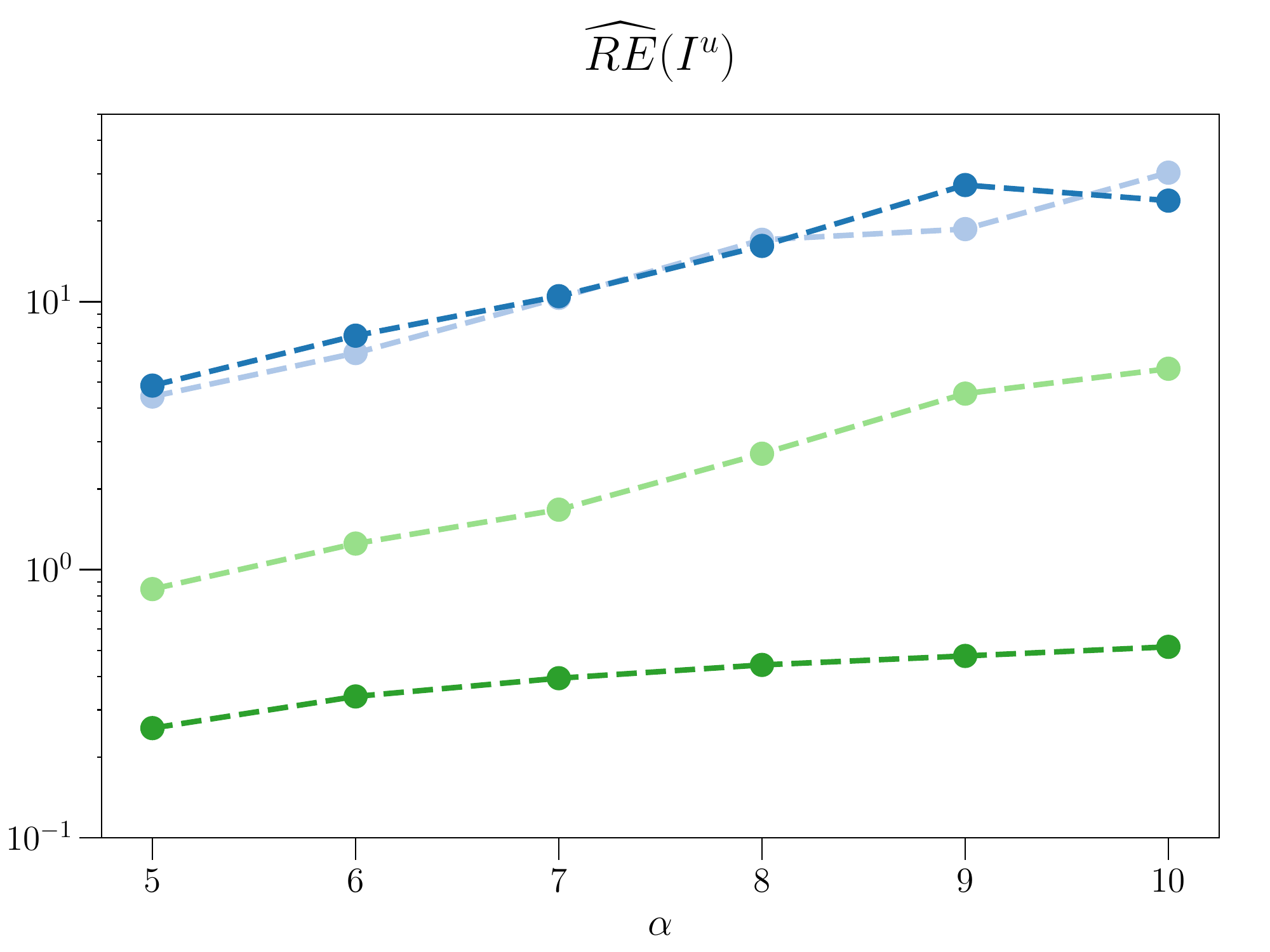}}
\caption{\textbf{(a)} We compare the naive Monte Carlo estimator with optimal importance sampling relying on a discretization of PDE \eqref{eq: bvp psi} for different values of $\alpha$, inverse temperature $\beta=1$ and different time steps $\Delta t $ with sample size $K = 10^3$. The confidence intervals are computed according to \eqref{eq: confidence interval}. \textbf{(b)} Analogously we repeat the comparison for the relative error of the estimators.} 
\label{fig: statistics with optimal control}
\end{figure}
\pagebreak

\section{Numerical strategies for solving the optimal control problem}
\label{sec: numerical strategies}

Numerically solving optimal control problems like the one in \eqref{eq: bvp psi} or (almost equivalently) solving high-dimensional PDEs such as the one stated in \eqref{eq: bvp value function} can be challenging. In particular in high- dimensional settings, this task seems hopeless when relying on classical grid based methods such as finite differences or finite elements since these methods suffer from the curse of dimensionality \cite{Weinan2021algorithms}. We will therefore work with an approach that relies on an optimization procedure aiming to iteratively minimize the cost functional \eqref{eq: cost functional} over a prescribed function class in the spirit of machine learning (cf. \cite{Nusken2021solving}). Our envisioned procedure can be described as follows:
\begin{enumerate}[(i)]
    \item Initialize the control $u$ with an appropriate choice $u^\text{init}$.
    \item Simulate realizations of the controlled process $X^{u}$ as defined in \eqref{eq: controlled langevin sde}.
    \item Compute an estimator of the cost functional $J(u; x)$ as well as its derivative with respect to $u$.
    \item Update the control $u$ by gradient descent.
    \item Repeat steps (ii)--(iv) until convergence.
\end{enumerate}

We argue that two aspects are crucial in order to implement the above scheme in practice. On the one hand, we need to design gradient estimators that are feasible in the sense that they can cope with random stopping times and at the same time exhibit sufficiently low variances. On the other hand, in particular in metastable settings, an appropriate initialization $u^\text{init}$ is important. Otherwise initial gradient information might turn out to be useless, for instance, due to increased variances or due to long trajectory simulations. As a remedy, we suggest a novel simulation algorithm which tries to combine ideas from existing approaches and thereby overcome these known issues. In particular, we will suggest identifying feasible control initializations coming from an adapted version of the heuristic metadynamics algorithm.

\subsection{Gradient computations}
\label{sec: gradient computations}

Let us first address the issue of computing gradients of the cost functional 
\begin{equation}
J(u; x) = \mathbb{E}^x\left[\mathcal{W}(X^u) + \frac{1}{2} \int_0^{\tau^u} |u(X_s^u)|^2 \mathrm ds \right],
\end{equation}
as already defined in \eqref{eq: cost functional}, with respect to the control. An inherent difficulty is that both the running costs and the process depend on the control $u$, and the latter implies that also the hitting time $\tau^u$ depends on $u$. We will approach this difficulty by an appropriate change of the path space measure and first compute a functional derivative in the G\^{a}teaux sense. Subsequently we can relate the rather abstract result to implementable gradients by considering controls $u = u_\theta$ that are parametrized by a parameter vector $\theta \in \mathbb{R}^p$. The G\^{a}teaux derivative can then be identified with a gradient with respect to $\theta$ by considering a special G\^{a}teaux derivative direction. Notably, this strategy will lead to a gradient estimator that can cope with the fact that the random hitting time $\tau^u$ appears both in the integral limit as well as in the terminal costs and depends on the function $u$, with respect to which we differentiate. Eventually, we can numerically compute our gradient estimator by a Monte Carlo approximation.

Let us start by recalling the definition of the G\^{a}teaux derivative.
\begin{definition}[G\^{a}teaux derivative]
We say that $J\colon \mathcal{U} \to \mathbb{R}$ is \emph{G\^{a}teaux differentiable} at $u \in \mathcal{U}$ if for all $\phi\in \mathcal{U}$ the mapping $\varepsilon \mapsto  J(u + \varepsilon \phi; x)$ is differentiable at $\varepsilon=0$. 
The G\^{a}teaux derivative of $J$ in direction $\phi$ is then defined as 
\begin{equation}
    \frac{\delta}{\delta u}J(u; x; \phi) \coloneqq \frac{\mathrm d}{\mathrm d \varepsilon}\Big|_{\varepsilon=0} J(u+ \varepsilon \phi;x).
\end{equation}
\end{definition}\par\bigskip

We can now compute the functional derivative of $J(u; x)$.
\begin{proposition}[G\^{a}teaux derivative of cost functional]
\label{prop: gateaux derivative of control functional}
The G\^{a}teaux derivative of the cost functional defined in \eqref{eq: cost functional} in the direction $\phi\in \mathcal{U}$ is given by
\begin{equation}
\frac{\delta}{\delta u}J(u; x; \phi) = \mathbb{E}^x \biggl[\int_0^{\tau^u} (u\cdot \phi)(X_s^u) \, \mathrm{d}s + \biggl(\mathcal{W}(X^u) + \frac{1}{2} \int_0^{\tau^u} \vert u (X_s^u) \vert^2 \, \mathrm{d}s\biggr) \\ \int_0^{\tau^u} \phi(X_s^u) \cdot \mathrm{d}W_s \biggr].
\end{equation}
\end{proposition}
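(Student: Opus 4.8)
The plan is to compute the derivative $\frac{\mathrm d}{\mathrm d\varepsilon}\big|_{\varepsilon=0} J(u+\varepsilon\phi;x)$ by first removing the $\varepsilon$-dependence from the \emph{law} of the underlying process. The obstacle, as the text already flags, is that $\varepsilon$ enters $J(u+\varepsilon\phi;x)$ in three coupled places: the drift of $X^{u+\varepsilon\phi}$, the running cost $\tfrac12\int_0^{\tau^{u+\varepsilon\phi}}|u+\varepsilon\phi|^2\,\mathrm ds$, and the random hitting time $\tau^{u+\varepsilon\phi}$ itself (which appears both as an integration limit and inside $g(X_{\tau})$). Differentiating under the expectation naively would require differentiating a stopping time with respect to a perturbation of the drift, which is delicate. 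The remedy is a Girsanov change of measure: rewrite $J(u+\varepsilon\phi;x)$ as an expectation over trajectories of the \emph{fixed} process $X^u$, with the perturbation absorbed into a Radon--Nikodym weight.

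Concretely, I would proceed as follows. First, express the controlled SDE for $X^{u+\varepsilon\phi}$ and note that its law on path space, relative to the law of $X^u$, is given (via Girsanov, cf. \Cref{app: girsanov} and \eqref{eq: girsanov martingale}) by the density
\[
Z^\varepsilon \coloneqq \exp\!\left(\varepsilon\int_0^{\tau^u}\phi(X_s^u)\cdot\mathrm dW_s^u - \frac{\varepsilon^2}{2}\int_0^{\tau^u}|\phi(X_s^u)|^2\,\mathrm ds\right),
\]
where $W^u$ is the Brownian motion driving $X^u$. Crucially, under this change of measure the hitting time $\tau^{u+\varepsilon\phi}$ of $X^{u+\varepsilon\phi}$ becomes the hitting time $\tau^u$ of $X^u$ — the stopping time no longer depends on $\varepsilon$. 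Hence
\[
J(u+\varepsilon\phi;x) = \mathbb{E}^x\!\left[\left(\mathcal{W}(X^u) + \frac{1}{2}\int_0^{\tau^u}|u(X_s^u)+\varepsilon\phi(X_s^u)|^2\,\mathrm ds\right) Z^\varepsilon\right].
\]
Now all $\varepsilon$-dependence sits inside a single expectation over a fixed probability space, in smooth (indeed analytic, or at least $C^1$) dependence on $\varepsilon$.

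Next I would differentiate at $\varepsilon=0$, exchanging $\frac{\mathrm d}{\mathrm d\varepsilon}$ with $\mathbb{E}^x$ (justified by dominated convergence, using the integrability of $\mathcal W$ assumed in \eqref{eq: work functional}, the linear growth of $u,\phi\in\mathcal U$, and standard exponential-moment bounds on Girsanov densities and on functionals over $[0,\tau^u]$ — this domination estimate is the one genuinely technical point, and I would state it as the main thing to be careful about). The product rule gives two terms. Differentiating the cost bracket while holding $Z^\varepsilon$ at $Z^0=1$ yields $\mathbb{E}^x\!\big[\int_0^{\tau^u}(u\cdot\phi)(X_s^u)\,\mathrm ds\big]$, since $\frac{\mathrm d}{\mathrm d\varepsilon}\big|_0\tfrac12|u+\varepsilon\phi|^2 = u\cdot\phi$. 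Differentiating $Z^\varepsilon$ at $\varepsilon=0$ gives $\frac{\mathrm d}{\mathrm d\varepsilon}\big|_0 Z^\varepsilon = \int_0^{\tau^u}\phi(X_s^u)\cdot\mathrm dW_s^u$, which multiplies the unperturbed cost bracket $\mathcal{W}(X^u)+\tfrac12\int_0^{\tau^u}|u(X_s^u)|^2\,\mathrm ds$. Adding the two contributions yields exactly the claimed formula. The only remaining cosmetic point is that the stochastic integral is with respect to the Brownian motion $W^u$ of the controlled process; since this is also the notation used in \eqref{eq: controlled langevin sde}, writing it as $\mathrm dW_s$ as in the statement is consistent. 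I would close by remarking that the interchange of derivative and expectation, together with the $C^1$ dependence on $\varepsilon$, also confirms that $J(\cdot;x)$ is indeed G\^ateaux differentiable at every $u\in\mathcal U$, so the statement is well-posed.
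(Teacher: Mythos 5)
Your proposal is correct and follows essentially the same route as the paper's proof: a Girsanov change of measure is used to absorb the perturbation $\varepsilon\phi$ of the drift into a Radon--Nikodym weight, so that both the underlying process and the random stopping time become those of the fixed control $u$, after which the derivative at $\varepsilon=0$ is computed by differentiating under the expectation (justified by dominated convergence) and applying the product rule. The only cosmetic difference is the exact form of the weight — you write $Z^\varepsilon=\exp(\varepsilon\int\phi\cdot\mathrm dW-\tfrac{\varepsilon^2}{2}\int|\phi|^2\,\mathrm ds)$ while the paper uses $\Lambda_\tau^{-1}(\varepsilon,\phi)$ with the opposite sign on the $\varepsilon^2$-term — but the two agree to first order in $\varepsilon$, so the resulting G\^{a}teaux derivative is identical.
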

\begin{proof}
See \Cref{app: proofs} for a proof.
\end{proof}

\Cref{prop: gateaux derivative of control functional} is valid for any direction $\phi\in \mathcal{U}$. Let us note that we are particularly interested in the directions\footnote{We assume that the functions $u_\theta$, $\theta \in \mathbb{R}^p$, as well as all partial derivatives $\frac{\partial}{\partial \theta_i}u_\theta$ lie in $\mathcal{U}$.} $\phi = \frac{\partial}{\partial \theta_i} u$ for all $i \in \{1, \dots, p\}$. This choice is motivated by the chain rule of the G\^{a}teaux derivative, which, under suitable assumptions, states that
\begin{equation}
    \frac{\partial}{\partial \theta_i} J(u_\theta; x) = \frac{\delta}{\delta u}{\Big|}_{u= u_{\theta}}J\Big(u; x; \frac{\partial}{\partial \theta_i} u_\theta\Big).
\end{equation}
We therefore readily get the following formula for the gradient of $J$ with respect to the parameter $\theta$.

\begin{corollary}[Gradient of cost functional]
\label{cor: gradient of cost functional}
Let $u = u_\theta$ be parametrized by the parameter vector $\theta \in \mathbb{R}^p$; then the partial derivatives of the control functional \eqref{eq: cost functional} with respect to the parameters are given by
\begin{align}
\label{eq: partial derivative of cost functional}
\begin{split}
\frac{\partial}{\partial \theta_i}J (u_\theta; x) 
&= \mathbb{E}^x \biggl[\int_0^{\tau^u} \biggl(u_\theta\cdot \frac{\partial}{\partial \theta_i} u_\theta\biggr)(X_s^{u_\theta}) \, \mathrm{d}s \\
&\qquad\quad + \biggl(\mathcal{W}(X^{u_\theta}) + \frac{1}{2} \int_0^{\tau^u} \vert u_\theta (X_s^{u_\theta}) \vert^2 \, \mathrm{d}s\biggr) \int_0^{\tau^u} \left(\frac{\partial}{\partial \theta_i} u_\theta\right)(X_s^{u_\theta}) \cdot \mathrm{d}W_s \biggr]
\end{split}
\end{align}
for any $i \in \{1, \dots, p \}$.
\end{corollary}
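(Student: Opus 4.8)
The plan is to deduce \Cref{cor: gradient of cost functional} directly from \Cref{prop: gateaux derivative of control functional} by combining it with the chain rule for functional derivatives, specializing the G\^ateaux direction to $\phi = \frac{\partial}{\partial \theta_i} u_\theta$. Fix $i \in \{1, \dots, p\}$, let $e_i$ denote the $i$-th standard basis vector of $\mathbb{R}^p$, and consider the curve $\varepsilon \mapsto u_{\theta + \varepsilon e_i}$ in $\mathcal{U}$. By the standing assumption that $\theta \mapsto u_\theta$ and all its partial derivatives take values in $\mathcal{U}$, this curve is differentiable at $\varepsilon = 0$ with derivative $\frac{\partial}{\partial \theta_i} u_\theta \in \mathcal{U}$, so that $u_{\theta + \varepsilon e_i} = u_\theta + \varepsilon \frac{\partial}{\partial \theta_i} u_\theta + r(\varepsilon)$ with $r(\varepsilon) = o(\varepsilon)$ in a suitable function norm.

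The key step is to establish the chain rule
\begin{equation*}
\frac{\partial}{\partial \theta_i} J(u_\theta; x) = \frac{\mathrm d}{\mathrm d\varepsilon}\Big|_{\varepsilon = 0} J\big(u_{\theta + \varepsilon e_i}; x\big) = \frac{\delta}{\delta u}\Big|_{u = u_\theta} J\Big(u; x; \frac{\partial}{\partial \theta_i} u_\theta\Big).
\end{equation*}
The second equality is the nontrivial point: it does not follow from the bare definition of the G\^ateaux derivative, because the increment $u_{\theta+\varepsilon e_i} - u_\theta$ is not exactly proportional to the fixed direction $\frac{\partial}{\partial \theta_i} u_\theta$ but carries the nonlinear remainder $r(\varepsilon)$. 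To handle it I would either upgrade the conclusion of \Cref{prop: gateaux derivative of control functional} to Hadamard (or Fr\'echet) differentiability of $u \mapsto J(u; x)$ in a neighbourhood of $u_\theta$, or argue directly: the explicit formula in \Cref{prop: gateaux derivative of control functional} shows that $\phi \mapsto \frac{\delta}{\delta u} J(u_\theta; x; \phi)$ is linear and bounded in $\phi$, and the Girsanov-based estimates used in its proof (in \Cref{app: proofs}) supply the uniform-in-$\varepsilon$ integrability needed both to let the contribution of $r(\varepsilon)$ vanish and to exchange the limit $\varepsilon \to 0$ with the expectation defining $J$. This is the main obstacle; everything else is bookkeeping.

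Once the chain rule is in place, it remains only to substitute $\phi = \frac{\partial}{\partial \theta_i} u_\theta$ and $u = u_\theta$ (hence $X^u = X^{u_\theta}$ and $\tau^u = \tau^{u_\theta}$) into the formula of \Cref{prop: gateaux derivative of control functional}: the term $\int_0^{\tau^u}(u\cdot\phi)(X_s^u)\,\mathrm ds$ becomes $\int_0^{\tau^u}\big(u_\theta\cdot\frac{\partial}{\partial\theta_i}u_\theta\big)(X_s^{u_\theta})\,\mathrm ds$, the stochastic integral $\int_0^{\tau^u}\phi(X_s^u)\cdot\mathrm dW_s$ becomes $\int_0^{\tau^u}\big(\frac{\partial}{\partial\theta_i}u_\theta\big)(X_s^{u_\theta})\cdot\mathrm dW_s$, and the prefactor $\mathcal W(X^u) + \frac12\int_0^{\tau^u}|u(X_s^u)|^2\,\mathrm ds$ is unchanged apart from writing $u = u_\theta$. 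Reading off the result yields exactly \eqref{eq: partial derivative of cost functional}, which proves the corollary.
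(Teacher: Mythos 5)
Your proposal follows exactly the paper's route: apply the chain rule for the G\^ateaux derivative with direction $\phi = \frac{\partial}{\partial\theta_i} u_\theta$ to the formula from \Cref{prop: gateaux derivative of control functional}. Your discussion of why the chain rule needs justification (the remainder $r(\varepsilon)$ is not proportional to the direction, so one needs Hadamard/Fr\'echet differentiability or a uniform-integrability argument) is in fact a careful unpacking of what the paper glosses over with the phrase ``under suitable assumptions,'' and is a welcome clarification rather than a deviation.
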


\begin{remark}
Note that the gradient given by \eqref{eq: partial derivative of cost functional} is equivalent to the one derived in \cite{Hartmann2012} up to discretization and up to a more general approximating function. For convenience, we repeat the related derivation for general parametrized functions $u_\theta$ in \Cref{app: alternative gradient computations}. Further note that -- contrary to the statement in \cite{Hartmann2012} -- our analysis shows that the gradient is in fact exact, even though it involves the random hitting time $\tau^u$, which depends on $u$.
\end{remark}

In principle, the gradient from \Cref{cor: gradient of cost functional} can be implemented straightforwardly by Monte Carlo approximation. However, even when relying on automatic differential tools, the repeated computation of, for instance, $\frac{\partial}{\partial \theta_i} u_\theta$ might be costly. Let us therefore state a loss functional that is more convenient from a computational point of view, namely
\begin{align}
\label{eq: J_eff}
\begin{split}
J_\text{eff}(u_\theta, u_\vartheta; x) 
&= \mathbb{E}^x \biggl[\frac{1}{2} \int_0^{\tau^u} \vert u_\vartheta (X_s^{u_\theta}) \vert^2 \, \mathrm{d}s \\ 
&\qquad\quad + \biggl(\mathcal{W}(X^{u_\theta}) + \frac{1}{2} \int_0^{\tau^u} \vert u_\theta (X_s^{u_\theta}) \vert^2 \, \mathrm{d}s\biggr) \int_0^{\tau^u} u_\vartheta(X_s^{u_\theta}) \cdot \mathrm{d}W_s \biggr],
\end{split}
\end{align}
which now depends on two parameter vectors $\theta, \vartheta \in \mathbb{R}^p$. It is straightforward to see that the gradient of the actual cost functional \eqref{eq: cost functional}, stated in \Cref{cor: gradient of cost functional}, can then be recovered via
\begin{equation}
\label{eq: autodiff of J}
 \nabla_\vartheta {J}_\text{eff}(u_\theta, u_\vartheta; x) \Big|_{\vartheta=\theta}
= \nabla_\theta {J}(u_\theta; x) .
\end{equation}
In practice, setting $\vartheta=\theta$ only after the differentiation is achieved by removing the parameter $\theta$ from the computational graph of automatic differentiation. Note that with this trick only one backward pass is needed.

Both $\frac{\partial}{\partial \theta_i}J$ and $J_\text{eff}$, as defined in \eqref{eq: partial derivative of cost functional} and \eqref{eq: J_eff}, respectively, can now be approximated by Monte Carlo, yielding, for instance, the estimator
\begin{align}
\label{eq: effective discrete cost functional}
\begin{split}
\widehat{J}_\text{eff}(u_\theta, u_\vartheta; x)
& = \frac{1}{K} \sum_{k=1}^K \biggl(\frac{1}{2} \int_0^{\tau^u} \vert u_\vartheta (X_s^{u_\theta, (k)}) \vert^2 \, \mathrm{d}s \\
& + \biggl(\mathcal{W}(X^{u_\theta, (k)}) + \frac{1}{2} \int_0^{\tau^u} \vert u_\theta (X_s^{u_\theta, (k)}) \vert^2 \, \mathrm{d}s\biggr) \int_0^{\tau^u} u_\vartheta(X_s^{u_\theta, (k)}) \cdot \mathrm{d}W_s^{(k)} \biggr),
\end{split}
\end{align}
where $X^{u_\theta, (k)}$ and $W^{(k)}$ are independent and identically distributed realizations of the controlled process \eqref{eq: controlled langevin sde} and of Brownian motion, respectively.

\subsection{Efficient initializations of stochastic optimization via metadynamics}
\label{sec: metadynamics}

We have so far computed a gradient estimator that allows for stochastic optimization in the spirit of reinforcement learning. To be precise, we can run gradient descent like algorithms that iteratively minimize a suitable objective function with the aim to improve the control which is applied to the dynamics. In this section we shall address the question of how to initialize the approximating function $u \in \mathcal{U}$ in such an iterative optimization procedure. This is in particular important in problems with random hitting times which depend crucially on the applied control $u$ -- as we have already seen in Figures \ref{fig: double well example hitting times}--\ref{fig: statistics with optimal control}. \\

Aiming for reasonable initializations of $u \in \mathcal{U}$ that can in particular cope with strong metastabilities of the dynamics, we suggest relying on ideas from the so-called \textit{metadynamics} algorithm. In its original form \textit{metadynamics} is an adaptive method for sampling the free energy profile of high-dimensional molecular systems \cite{Laio2002,Valsson2016}. Its main intention is related to sampling from corresponding stationary distributions, in particular in systems that exhibit high metastabilities. The approach can be described quite vividly: one iteratively ``fills up'' regions with low energy until the free energy surface can be determined. Those regions can be identified as the ones where the trajectories spend a sufficient amount of time. The filling is specifically achieved by adding Gaussian functions every fixed time interval until the trajectories are eventually able to escape the local minima. 

We use the underlying idea of the metadynamics algorithm in order to compute a reasonable initialization for our optimization procedure. Unlike in the original algorithm, we do not necessarily restrict this approach to the reduced collective variable space, but allow as well for the full space (cf. \Cref{rem: reaction coordinates}). Furthermore we adapt the stopping criterion used in the original version, as will be detailed later.

Analogously to \eqref{eq: optimal potential}, the idea is to modify the potential via
\begin{equation}
    V + V_\text{bias},
\end{equation}
where now the bias potential is given by a sum of unnormalized Gaussian functions, i.e.,
\begin{equation}
\label{eq: meta bias potential}
V_{\text{bias}}(x; \eta, \mu, \Sigma) = \sum\limits_{m=1}^M \eta_m \widetilde{\mathcal{N}}(x; \mu_m, \Sigma_m),
\end{equation}
where $\widetilde{\mathcal{N}}(\,\cdot\,; \mu_m, \Sigma_m)$ is an unnormalized density of a multivariate normal distribution, i.e.,
\begin{equation}
    \widetilde{\mathcal{N}}(x; \mu_m, \Sigma_m) = \exp\left(-\frac{1}{2}(x - \mu_m)\cdot \Sigma_m^{-1} (x - \mu_m)  \right),
\end{equation}
with mean $\mu_m \in \mathbb{R}^d$, covariance matrix $\Sigma_m \in \mathbb{R}^{d \times d}$, and $\eta_m \in \mathbb{R}$ being an appropriate weight. The intuition is that the added bias potential prevents the trajectory from going back to the already visited states. This bias potential can also be interpreted as a control function. The control resulting from the bias potential is then given by
\begin{equation}
\label{eq: metadynamics control initialization}
    u^\text{init}(x) = -\sigma^{-1}(x) \nabla V_{\text{bias}}(x; \eta, \mu, \Sigma) = - \sigma^{-1}(x) \sum\limits_{m=1}^M \eta_m  \nabla \widetilde{\mathcal{N}}(x; \mu_m, \Sigma_m).
\end{equation}

The intuition behind this strategy is that we essentially want to ``fill up'' those local minima of the potential which influence the estimation of the quantity of interest. Note that this has already been considered in \cite{Quer2018}, however, not in combination with optimal control ideas. We then take the resulting control $u^\text{init}$ as a rough initial guess of the optimal control, which is likely to push trajectories over the energy barrier such that trajectories are not trapped in metastable regions anymore. 
The expected advantages of such a control initialization are twofold. On the one hand the trajectories (in particular at the beginning of the control optimization) are expected to be much shorter, which reduces the runtime significantly. On the other hand we expect the variances of the gradient estimators, as for instance defined in \Cref{cor: gradient of cost functional}, to be smaller, which will result in faster convergence of gradient descent algorithms. Crucially, we should note that without our metadynamics based initialization method the optimization might not even converge at all. We refer to \Cref{sec: numerical examples} for illustrative examples of those aspects. The basic principle of our metadynamics based initialization method is illustrated in \Cref{fig: metadynamics algorithm}. \\

\begin{figure}[tbhp]
\centering
\subfloat[]{\label{fig: 4a}\includegraphics[width=60mm]{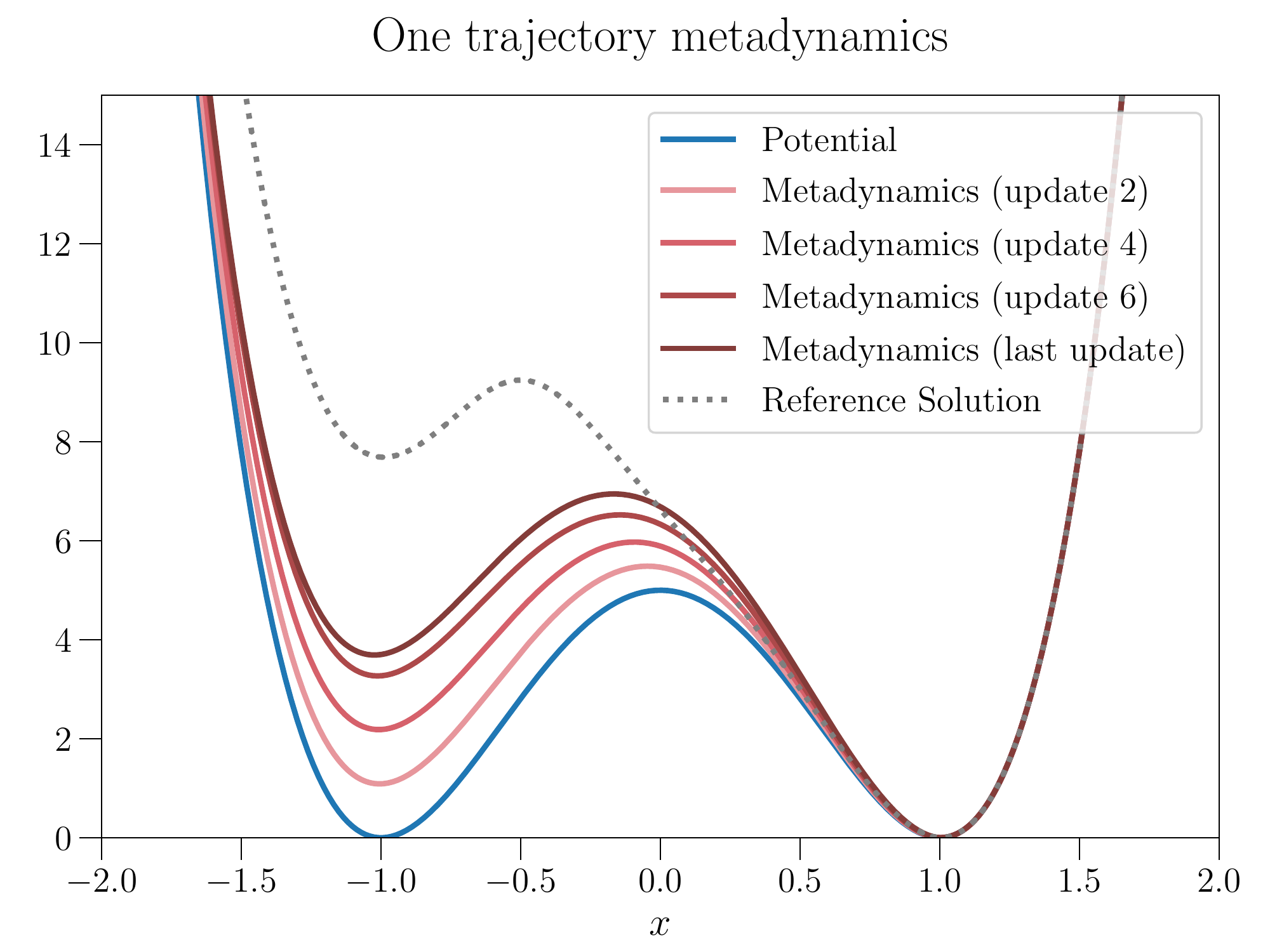}}
\subfloat[]{\label{fig: 4b}\includegraphics[width=60mm]{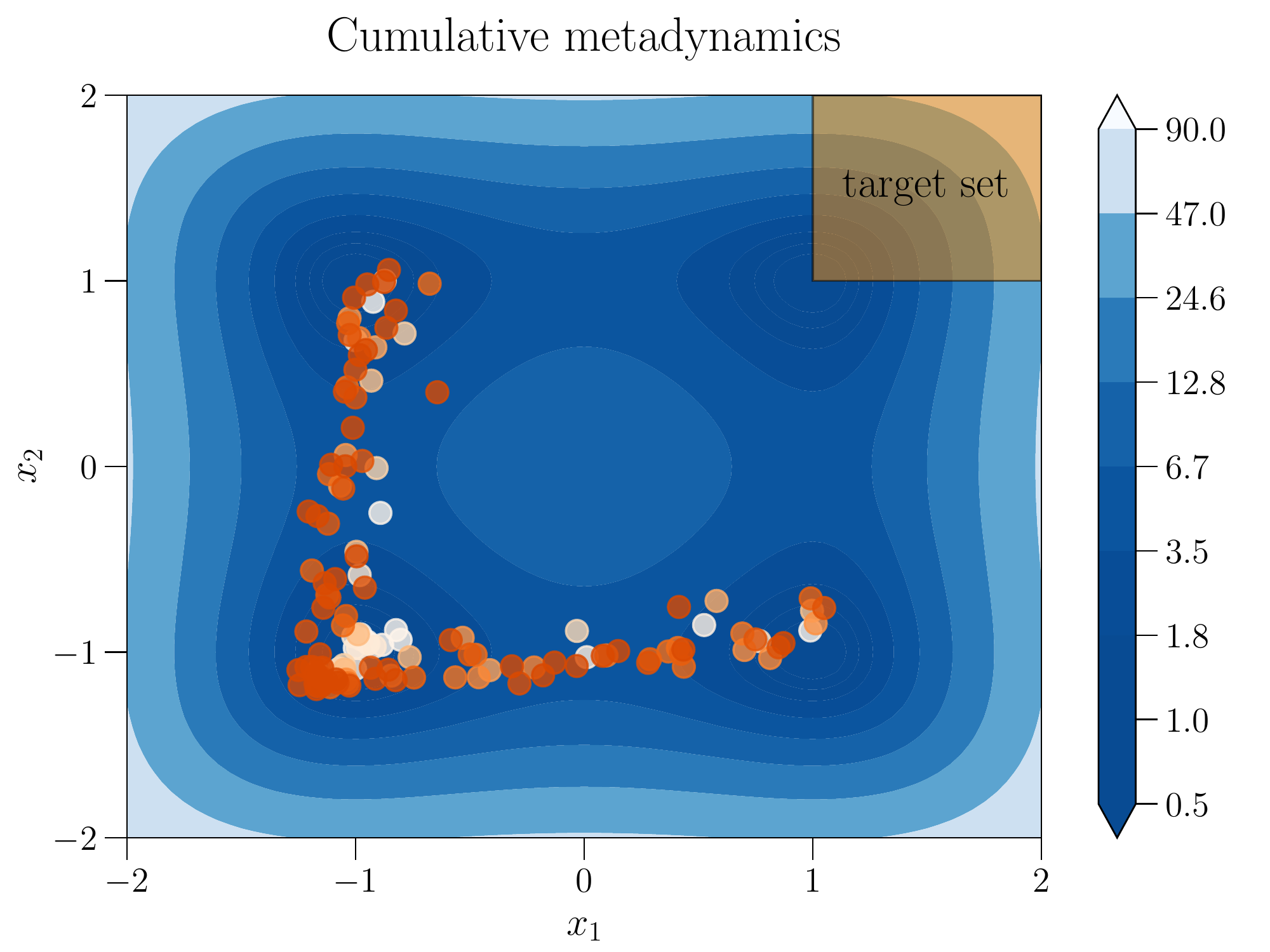}}
\caption{\textbf{(a)} Perturbed potential for different updates of $V_\text{bias}^{(m)}$ during \Cref{alg: one trajectory metadynamics} for the double well potential \Cref{ex: double well} with $\alpha=5$ and $\beta=1$. \textbf{(b)} Centers of the added unnormalized Gaussian functions after \Cref{alg: cumulative metadynamics} for the two-dimensional extensional of the double well potential. The color of the centers represent the weights of the added functions.}
\label{fig: metadynamics algorithm}
\end{figure}

In the following let us suggest two versions of a metadynamics based initialization algorithm. The first one builds the bias potential $V_\text{bias}$ by sampling just one trajectory, as already considered in \cite{Quer2018}. This trajectory follows the dynamics of the controlled stochastic process \eqref{eq: controlled langevin sde} until it hits the target set $\mathcal{T}$. In particular, the control $u$ is modified on the fly after each specified time interval $\delta$ by adding another unnormalized Gaussian function to the potential according to \eqref{eq: meta bias potential} with $\mu_m$ being the averaged position of the particle over the last time interval. 

This method ensures that metastable regions get ``filled up'' when visited by the trajectory. The time interval $\delta$, the covariance matrix of the Gaussians $\Sigma_m$, and the weights $\eta_m$ should be chosen such that the original potential is not perturbed too much, although still allowing for a significant reduction of the hitting time $\tau^u$. It is possible that using different versions of the metadynamics algorithm like \textit{well-tempered metadynamics} can lead to further improvements of the initialization procedure. For simplicity, however, we choose constant weights and covariance matrices. Let us summarize our first method in \Cref{alg: one trajectory metadynamics}.

\begin{algorithm}
\caption{One trajectory adapted metadynamics}
\label{alg: one trajectory metadynamics}
\begin{algorithmic}[1]
\STATE{Consider specified target set $\mathcal{T}$ and dynamics $X^u$ as in \eqref{eq: controlled langevin sde}.}
\STATE{Choose time interval $\delta$, weight $\eta > 0$ and covariance matrix $\Sigma \in \mathbb{R}^{d \times d}$.}
\STATE{Initialize $X_0^u = x$, $V_\text{bias}^{(0)} = 0, m=0$.}
\WHILE{trajectory has not arrived in $\mathcal{T}$ }
\STATE{Run dynamics $X^u$ with control $u = - \sigma^{-1}\nabla V_\text{bias}^{(m)}$ for time interval $\delta$.} 
\STATE{Choose $\mu_m = \frac{1}{\delta}\int_{m \delta}^{(m+1)\delta} X_s^u \, \mathrm ds$.}
\STATE{Adapt bias potential $V_\text{bias}^{(m+1)} \gets V_\text{bias}^{(m)} + \eta\, \widetilde{\mathcal{N}}(\,\cdot \,; \mu_m, \Sigma)$.}
\STATE{$m \gets m+1$.}
\ENDWHILE
\STATE{Set $V_\text{bias} =V_\text{bias}^{(m)}$.}
\end{algorithmic}
\end{algorithm}

In high-dimensional settings the considered dynamics often exhibits multiple metastable regions. In this case, a bias potential provided by only one trajectory might not be sufficient since we cannot guarantee the trajectory to visit all metastable regions before hitting the target set. Therefore, only some of those regions might be ``filled up.'' To deal with this issue, we suggest sampling multiple trajectories and execute corresponding bias potential modifications cumulatively. To be precise, each trajectory starts at the same initial position, one after the other. The first trajectory starts with the zero bias initialization, the second one considers the bias potential that came out after running the first trajectory, and so on. Ideally one would like to stop sampling trajectories once the bias potential is already properly ``filled up,'' but this can be difficult to determine. In any case, it is unlikely that we perturb the potential much more than needed since otherwise trajectories would hit the target set even before the time interval $\delta$ elapsed. Furthermore, we introduce the scaling factor $r \in (0, 1)$ that shall stabilize the procedure by reducing the effect of adding further Gaussians for each trajectory.
The cumulative method is summarized in \Cref{alg: cumulative metadynamics}.

\begin{remark}[Metadynamics in reaction coordinates]
\label{rem: reaction coordinates}
For very high-dimensional systems it may help to apply our metadynamics based algorithms in the space of collective variables. To this end, let us assume that a \textit{reaction coordinate} $\xi: \mathbb{R}^d \rightarrow \mathbb{R}^s$ is available, where $s < d$, mapping the full coordinate space into the space of collective variables and, loosely speaking, reducing the dynamics to its key features. Note that finding this projection of the corresponding dynamics, often known as \emph{effective dynamics}, is a challenging problem and that it may not follow the structure of the original diffusion process. However, let us assume that $\xi$ is such that the corresponding effective dynamics does not lose the dominant timescales of the original dynamics. Then our Algorithms \ref{alg: one trajectory metadynamics} and \ref{alg: cumulative metadynamics} can be applied for the effective dynamics of the controlled process. Note that the unnormalized Gaussian functions then live in the collective variable space and so does the corresponding bias potential. However, by using the composition between the unnormalized Gaussians and the reaction coordinate one can also express the control resulting from the bias potential (recall \eqref{eq: metadynamics control initialization}) in the complete state space by
\begin{equation}
\label{eq: metadynamics cv control initialization}
u^\text{init}(x) 
= - \sigma^{-1}(x) \sum\limits_{m=1}^M \eta_m  \nabla (\widetilde{\mathcal{N}} \cdot \xi)(x; \mu_m, \Sigma_m),
\end{equation}
where the gradient of the composition between the unnormalized Gaussian function and the reaction coordinate is given by the multivariate version of the chain rule,
\begin{equation}
\nabla (\widetilde{\mathcal{N}} \circ \xi)(x) = \textbf{J}_{\widetilde{\mathcal{N}}}(y)|_{y=\xi(x)} \cdot \textbf{J}_\xi(x),
\end{equation}
where $\textbf{J}_h$ represents the Jacobian matrix for a vector-valued function $h$.
\end{remark}

\begin{algorithm}
\caption{Cumulative adapted metadynamics}
\label{alg: cumulative metadynamics}
\begin{algorithmic}[1]
\STATE{Consider specified target set $\mathcal{T}$ and dynamics $X^u$ as in \eqref{eq: controlled langevin sde}.}
\STATE{Choose time interval $\delta$, number of trajectories $K^\text{meta}$, weight $\eta > 0$, scaling factor $r \in (0, 1)$, and covariance matrix $\Sigma \in \mathbb{R}^{d \times d}$.}
\STATE{Initialize $V_\text{bias}^{(0)} = 0$.}
\FOR{$k \in \{1, \dots, K^\text{meta}\}$}
\STATE{Set $X_{0}^{u,(k)} = x, m=0$.}
\WHILE{trajectory has not arrived in $\mathcal{T}$}
\STATE{Run dynamics $X^{u,(k)}$ with control $u = - \sigma^{-1}\nabla V_\text{bias}^{(m)}$ for time interval $\delta$.}
\STATE{Choose $\mu_m = \frac{1}{\delta}\int_{m \delta}^{(m+1)\delta} X_s^{u,(k)} \, \mathrm ds$.}
\STATE{Adapt bias potential $V_\text{bias}^{(m+1)} \gets V_\text{bias}^{(m)} + r^{k-1}\eta\, \widetilde{\mathcal{N}}(\,\cdot \,; \mu_m, \Sigma)$.}
\STATE{$m \gets m+1$.}
\ENDWHILE
\ENDFOR
\STATE{Set $V_\text{bias} =V_\text{bias}^{(m)}$.}
\end{algorithmic}
\end{algorithm}

\subsection{Control function approximations}
\label{sec: control function approximation}

Finally, we need to specify how to approximate the control function $u \in \mathcal{U}$. The general idea is to rely on parametrized functions $u_\theta$, specified by the parameter vector $\theta \in \mathbb{R}^p$. In particular, we consider a linear combination of ansatz functions (Galerkin approach) as well as neural network approximations. The former match well with the structure of the metadynamics based initialization algorithm that we have introduced before, but suffer from the curse of dimensionality. The latter seem well suited for high-dimensional problems, but need an additional step in order to benefit from our initialization strategy. Note that either function space needs to be sufficiently large in order to approximate the optimal control $u^*$ well enough.

In the Galerkin approach the control $u$ is projected onto a space consisting of finitely many ansatz functions. A clever choice of ansatz functions depends on the problem at hand and one might, for instance, consider radial symmetric functions, polynomials, or piecewise linear functions with Chebyshev coefficients; see, e.g., \cite{Hartmann2016}. As a related method let us mention tensor train approximations and refer to \cite{Fackeldey2022approximative, Richter2021solving} for further details. In this work we rely on Gaussian ansatz functions since they match well with the aforementioned initialization strategy. To be precise, let us choose the control approximation
\begin{equation}
\label{eq: Gaussian ansatz functions}
    u_\theta(x) = \sum\limits_{i=1}^p \theta_i  \nabla \mathcal{N}(x; \mu_i, \Sigma_i),
\end{equation}
where $\mathcal{N}(\,\cdot\,; \mu_i, \Sigma_i)$ is the density of a multivariate normal distribution with mean $\mu_i \in \mathbb{R}^d$ and covariance matrix $\Sigma_i \in \mathbb{R}^{d \times d}$, as in \eqref{eq: metadynamics control initialization}.

Feed-forward neural networks, on the other hand, are nonlinear functions that exhibit remarkable approximation properties \cite{Bach2017breaking, Jentzen2018proof}. They essentially consist of compositions of affine-linear maps and nonlinear activation functions. In particular, we define a \textit{feed-forward neural network} $u_\theta:\mathbb{R}^{d_0} \to \mathbb{R}^{d_L}$ with $L$ layers by
\begin{equation}
\label{eq: feed-foreward nn}
u_\theta(x) = A_L \rho(A_{L-1} \rho(\cdots  \rho(A_1 x + b_ 1) \cdots) + b_{L-1}) + b_L
\end{equation}
with matrices $A_l \in \mathbb{R}^{d_{l} \times d_{l-1}}$, vectors $b_l \in \mathbb{R}^{d_l}, 1 \le l \le L$, and a nonlinear activation function $\rho: \mathbb{R} \to \mathbb{R}$ that is applied componentwise. The collection of matrices $A_l$ and vectors $b_l$ comprises the learnable parameters $\theta$. For our control approximations, we can now choose $d_0 = d_L=d$. Note that the choice of the so-called architecture of neural networks, i.e., the number of parameters in each layer, is not always straightforward and requires some fine-tuning.

For initializing $u_\theta$ with the control $u^\text{init}$ obtained by one of the two adapted metadynamics algorithms, which have been suggested in \Cref{sec: metadynamics}, we can consider a least squares minimization on a given domain $\mathcal{D}$. That means we minimize the loss
\begin{equation}
\label{eq: approximation problem loss}
\mathcal{L}_\text{init}(\theta) \coloneqq \mathbb{E}\left[ \big| u_\theta(X) - u^\text{init}(X)\big|^2 \right], 
\end{equation}
where $X \sim \nu$ is sampled from a prescribed measure $\nu$ that has full support on the domain $\mathcal{D}$, e.g., the uniform measure. For the parametric approximation we can solve the minimization of the loss explicitly by solving a least squares problem. When considering neural networks we have to minimize $\mathcal{L}_\text{init}$ by some variant of gradient descent where the different parameters, such as batch size and stopping criterion, have to be chosen depending on the problem. Further details on the applied minimization method are provided in \Cref{sec: numerical examples}. For this minimization and for the computation of the gradient of the control cost, we rely on automatic differentiation tools such as PyTorch. For convenience let us state our final algorithm.

\begin{algorithm}[H]
\caption{Efficient importance sampling}
\label{alg: Efficient importance sampling}
\begin{algorithmic}[1]
\STATE{Choose a metadynamics time interval $\delta$, weight $\eta > 0$ (potentially a scaling factor $r \in (0, 1)$), covariance matrix $\Sigma \in \mathbb{R}^{d \times d}$, a function approximation $u_\theta$, a gradient based optimization algorithm, a corresponding learning rate $\lambda > 0$, a sample size $K$, a step size $\Delta t$, and a stopping criterion.}
\STATE{Compute $u^\text{init}$ by either Algorithm \ref{alg: one trajectory metadynamics} or \ref{alg: cumulative metadynamics}.}
\STATE{Initialize $u_\theta \approx u^\text{init}$ by minimizing $\mathcal{L}_\text{init}(\theta)$.}
\REPEAT
\STATE{Simulate $K$ samples of $X^{u, (k)}$.}
\STATE{Compute the gradient of the control cost estimator $\widehat{J}_\text{eff}$ defined in \eqref{eq: effective discrete cost functional} via automatic differentiation.}
\STATE{Update the parameters based on the optimization algorithm.}
\UNTIL{stopping criterion is fulfilled.}
\STATE{Do importance sampling according to \eqref{eq: expectation IS} with the control $u_\theta$.}
\STATE{\textbf{Result:} Low-variance estimate of $\Psi$ as defined in \eqref{eq: psi}.}
\end{algorithmic}
\end{algorithm}

\section{Numerical examples}
\label{sec: numerical examples}

In this section we demonstrate that our proposed \Cref{alg: Efficient importance sampling} can indeed lead to low-variance estimators of observables that involve random stopping times. In particular, we will show in both low- and high-dimensional metastable examples that the combination of control based importance sampling together with reasonable initializations leads to improved estimators. Throughout we will consider the overdamped Langevin equation as stated in \eqref{eq: controlled langevin sde} with $\sigma(x)=\sqrt{2 \beta^{-1}} \operatorname{Id}$ on the domain $\mathcal{D}=[-3, 3]^d$. We consider a multidimensional extension of the double well potential\footnote{Notice that even though the potential is symmetric in all dimensions we cannot decouple the estimation or control problem, respectively.}
\begin{equation}
\label{eq: nd double well potential}
{V_\alpha(x) = \sum\limits_{i=1}^d \alpha_i (x_i^2 - 1)^2},
\end{equation}
where the parameter $\alpha \in \mathbb{R}^d$ as well as the inverse temperature $\beta$ encode the strength of the metastability. We aim to compute the quantity
\begin{equation}
    \mathbb{E}\left[e^{-\tau} \right]
\end{equation}
by choosing $f = 1$ and $g = 0$ in the observable \eqref{eq: work functional}. If not stated differently we set the initial value of the process to $x = (-1, \dots, -1)^\top$ for all examples. The inverse temperature is set to $\beta = 1$ so that the metastability is mainly influenced by the choice of $\alpha$. If not otherwise stated the target set is chosen to be $\mathcal{T} = [1, 3]^d$.

In our numerical simulations we discretize the controlled stochastic process in time $0 < t_1 < \cdots < t_{\widetilde{N}}$ using the Euler--Maruyama scheme
\begin{equation}
\label{eq: discrete controlled langevin sde}
\widehat{X}_{n+1}^u = \widehat{X}_{n}^u  + (- \nabla V(\widehat{X}_n^u) + \sqrt{2 \beta^{-1}} \, u(\widehat{X}_n^u))\Delta t + \sqrt{2 \beta^{-1}} \,  \xi_{n+1}\sqrt{\Delta t} , \qquad \widehat{X}^u_{0} = x,
\end{equation}
where $\Delta t = t_{n+1} - t_n$ is a time step and $\xi_{n+1} \sim \mathcal{N}(0, \operatorname{Id})$ is a standard normally distributed random variable \cite{Higham2001}. Note that the length of each discrete trajectory is random according to $\widetilde{N} = \floor{\tau^u / \Delta t}$. For each experiment we monitor the importance sampling mean as the Monte Carlo estimator of \eqref{eq: expectation IS} and its variance and relative error accordingly. For the Monte Carlo estimators we compute confidence intervals by
\begin{equation}
\label{eq: confidence interval}
\widehat{\Psi}(x) \pm 1.96 \, \frac{\sqrt{\widehat{\operatorname{Var}}(I^u)}}{\sqrt{K}},
\end{equation}
where $\widehat{\operatorname{Var}}$ is the estimated variance computed with a sample size $K_\text{Var}=10^5$, and $K$ the sample size of the Monte Carlo estimator. We also keep track of mean first hitting time $\mathbb{E}[\tau^u]$ and the time needed for the last trajectory of an ensemble to reach the target set $\mathcal{T}$.

For dimensions $d \le 3$ we can compute reference solutions for the HJB equation \eqref{eq: bvp psi} (and therefore for the optimal control $u^*_\text{ref}$) by a finite difference method. We will use those for comparing against an (up to time discretization) optimal sampling efficiency in terms of relative errors. Furthermore, we compute an $L^2$ type error of our approximations along the controlled trajectories, i.e.,
\begin{equation}
L^2(u) \coloneqq \mathbb{E}\left[ \int_0^{\tau^{u}} |u - u^*_\text{ref}|^2 (X_s^{u}) \, \mathrm ds \right].
\end{equation}

The control approximation with Gaussian ansatz functions is done according to \eqref{eq: Gaussian ansatz functions} with $p$ Gaussians uniformly distributed over the domain $\mathcal{D}$. The covariance matrix is constant, $\Sigma_i = 0.5 \operatorname{Id}$ for all $i \in \{1, \dots, p \}$ and the number of ansatz functions changes depending on the example.
For the neural network representation we consider a feed-forward neural network according to \eqref{eq: feed-foreward nn} with two hidden layers, $d_1 = d_2 = 30$ and activation function $\rho(x) = \tanh(x)$. The initialization of $u_\theta$ with the control $u^\text{init}$ is achieved after minimizing the mean squared error loss \eqref{eq: approximation problem loss} by using the Adam algorithm with learning rate $\lambda=0.01$ \cite{kingma2014adam}. If not otherwise specified the training data points for this approximation problem have been uniformly sampled from the domain just one time and have been used for all gradient steps. A total of $10^3$ gradient steps suffices to obtain a good approximation. In order to have fair comparisons we set the control $u^{(0)}$ to be the zero function when not considering a metadynamics based initialization.

Moreover, the control optimization in \Cref{alg: Efficient importance sampling} is implemented using the Adam algorithm with learning rate $\lambda=0.01$. If not otherwise stated, the batch size is set to be $K=10^3$ and the time step $\Delta t = 0.005$. We repeat all of our experiments multiple times with different random seeds and different time intervals $\delta$ in order to guarantee generalizability. Each experiment requires just one CPU core and the maximum value of allocated memory is set to 100GB.

\subsection{Metastable double well potential}
Let us start with a one-dimensional metastable example for which $\alpha = 5$. We approximate the control with neural networks or Gaussians ansatz functions, where both are initialized with either the zero function or an initialization given by \Cref{alg: one trajectory metadynamics}, for which we choose $\delta = 0.2$, $\eta=1.0$, and $\sigma^2=0.5$. For this example we consider a finer time step $\Delta t = 0.001$ and a batch size of $K = 1000$. The resulting modified potential consists of $M=7$ unnormalized Gaussian functions. For the control approximations with ansatz functions we choose $p=50$ Gaussians. \Cref{fig: 1d re and l2} shows the relative error of the importance sampling estimator as well as the $L^2$ approximation error as a function of the gradient steps. We can see that the neural network performs better and that the control initializations speed up the convergence significantly. Note that the learned importance sampling control leads to a similar relative error compared to a reference optimal control.
In \Cref{fig: 1d control and perturbed potential} we display the approximated functions, once as the control and once as the perturbed potential. We can see that in particular the neural network approximation agrees well with the reference solution, whereas both the Gaussian approximation and the metadynamics attempt without control optimizations based control are off.
\pagebreak

\begin{figure}[tbhp]
\centering
\subfloat[]{\label{fig: 5a}\includegraphics[width=60mm]{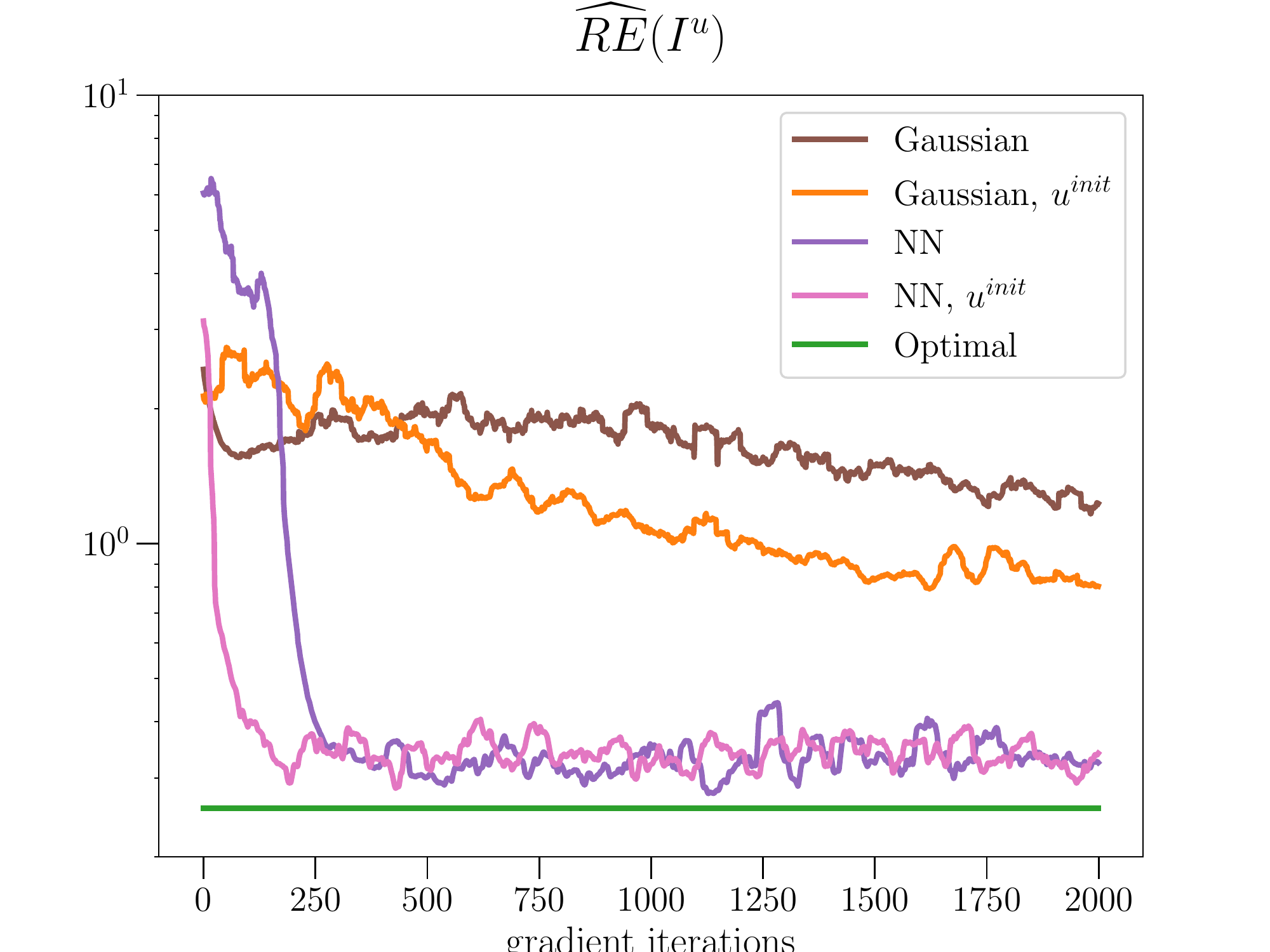}}
\subfloat[]{\label{fig: 5b}\includegraphics[width=60mm]{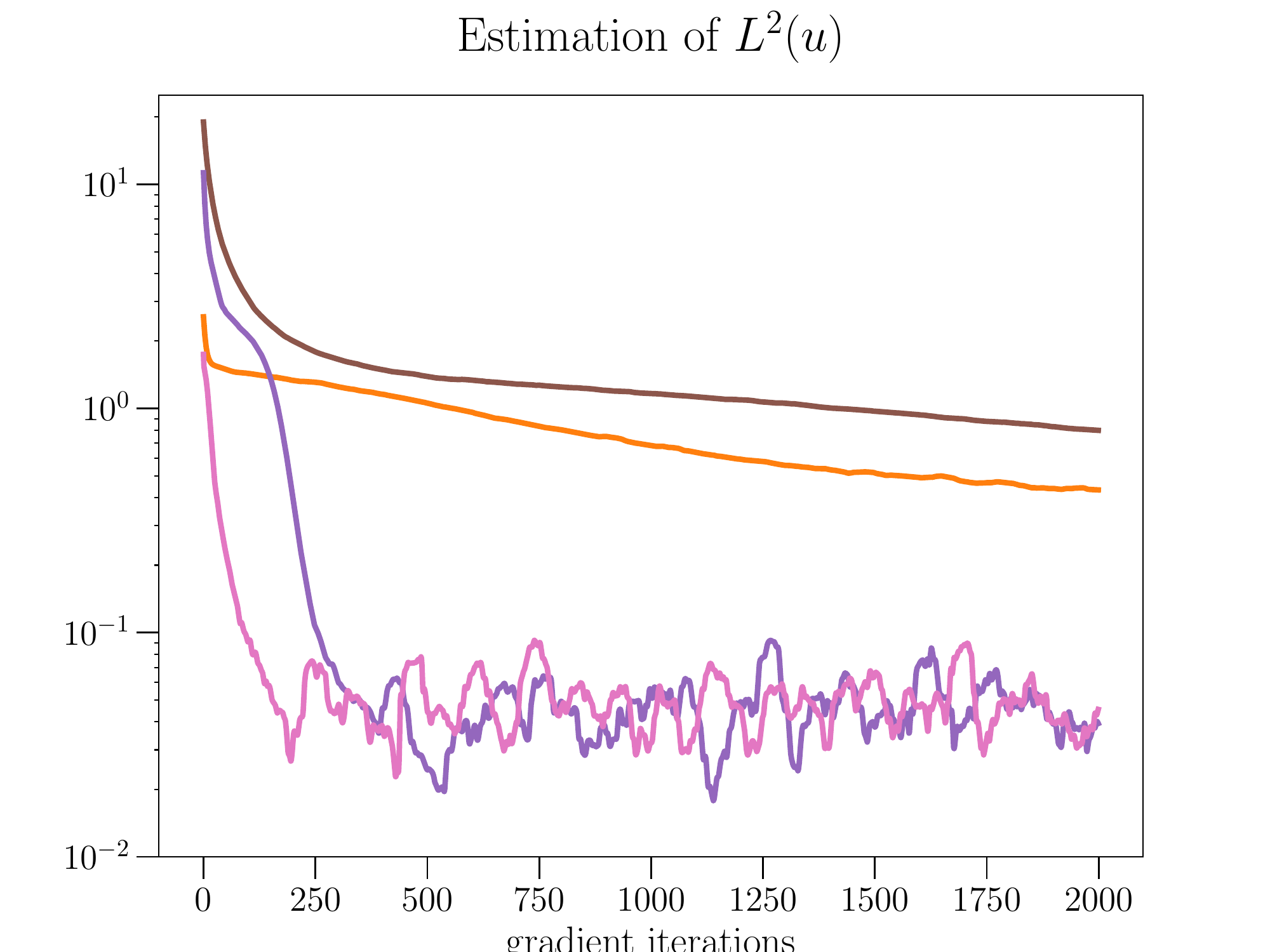}}
\caption{\textbf{(a)} Importance sampling relative error and \textbf{(b)} estimation of $L^2(u)$ at each gradient step.}
\label{fig: 1d re and l2}
\end{figure}

\begin{figure}[tbhp]
\centering
\subfloat[]{\label{fig: 6a}\includegraphics[width=60mm]{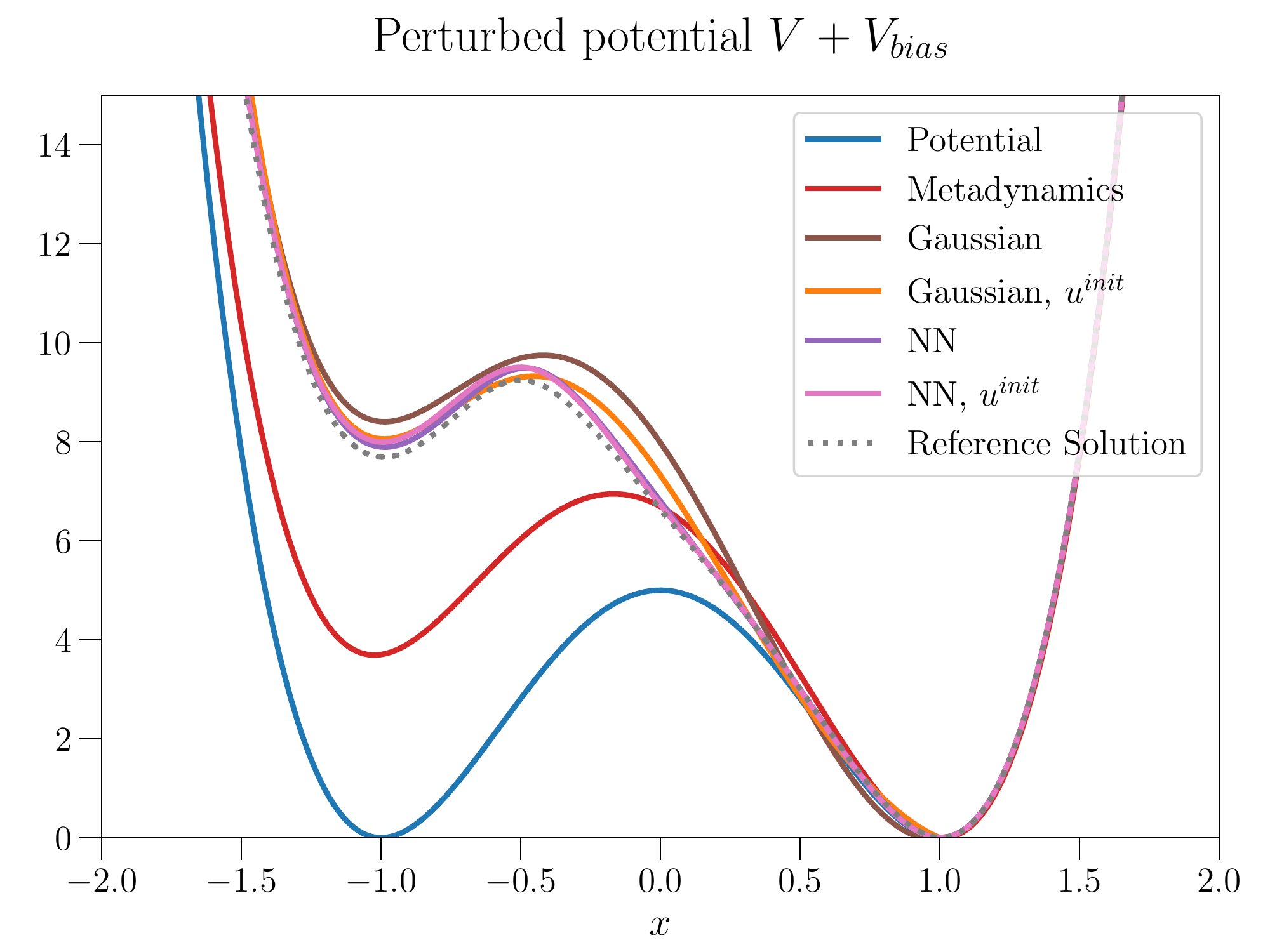}}
\subfloat[]{\label{fig: 6b}\includegraphics[width=60mm]{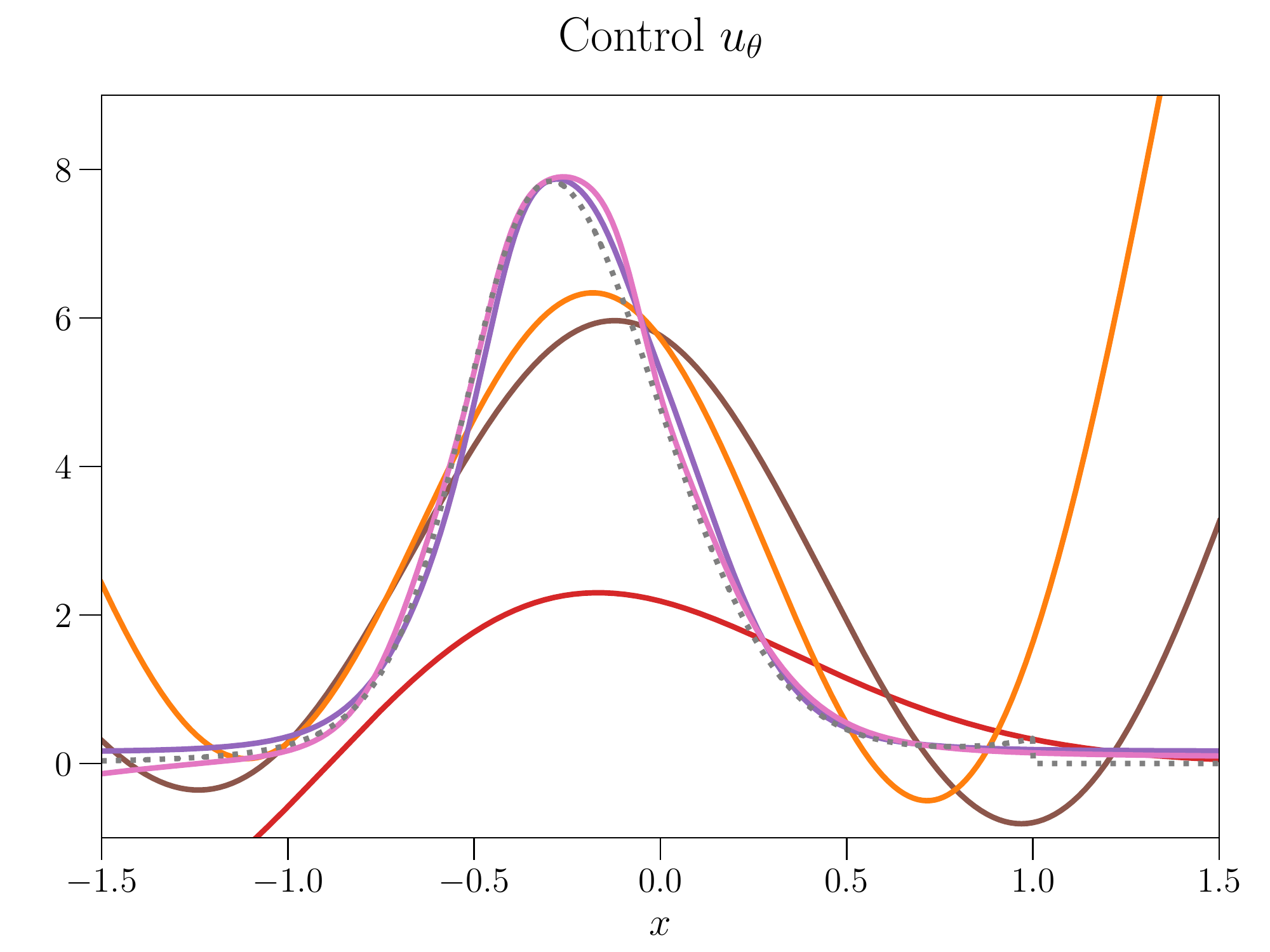}}
\caption{\textbf{(a)} Perturbed potential and \textbf{(b)} control after convergence.}
\label{fig: 1d control and perturbed potential}
\end{figure}

We have stated before that the metastability depends on the parameter $\alpha$. Let us therefore vary this parameter and compare performances of the following schemes against each other: naive Monte Carlo (MC sampling), a bias constructed by an adapted metadynamics algorithm without optimization as proposed in \cite{Quer2018} (Metadynamics), a Gaussian control representation without initialization (Gaussians) as well as with the metadynamics based initialization (Gaussians, $u^\text{init}$), a neural network representation without initialization (NN) as well as with initialization (NN, $u^\text{init}$), a sampling with the discretized optimal control calculated with a finite difference method (Optimal), and the reference solution from the PDE (Reference Solution).

In \Cref{fig: 1d psi and re} we display the estimator as well as the relative errors. We can see that the estimation of the expectation value gets worse with increasing value of $\alpha$, in particular when relying on naive Monte Carlo estimation, the Gaussian control approximations, or the metadynamcis algorithm only. We should highlight that without control initialization we are not able to get results for $\alpha > 6$ for the importance sampling estimators since corresponding control based optimization algorithms exceed the memory constraints. The reason for this is that the first sampling of the gradient estimator takes very long and thus the allocated memory capacity is exceeded. With the adapted metadynamics based initializations, on the other hand, we can observe that the optimal control importance sampling strategies yield valid estimators with low relative error even for large metastabilities. However, the neural network representation with initialization results in a much smaller relative error than the Gaussian representation also with initialization. Moreover, we stress the fact that doing importance sampling right after metadynamics does not guarantee satisfactory results. This is probably due to $u^\text{init}$ still being off from $u^*$, noting that there is an exponential dependency on the variance in the distance of the used control to the optimal control \cite{Hartmann2021nonasymptotic}.

\begin{figure}[tbhp]
\centering
\subfloat[]{\label{fig: 7a}\includegraphics[width=60mm]{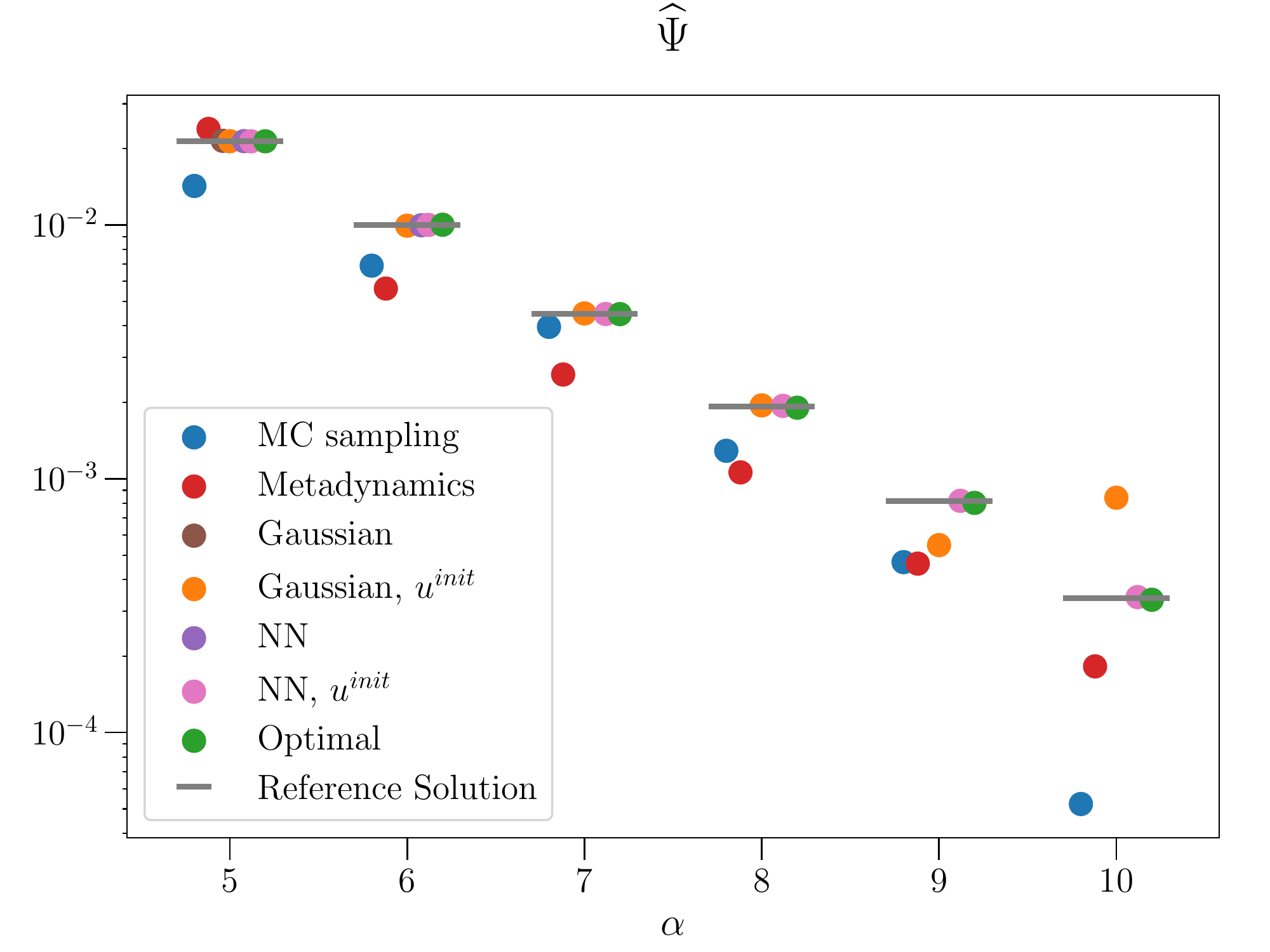}}
\subfloat[]{\label{fig: 7b}\includegraphics[width=60mm]{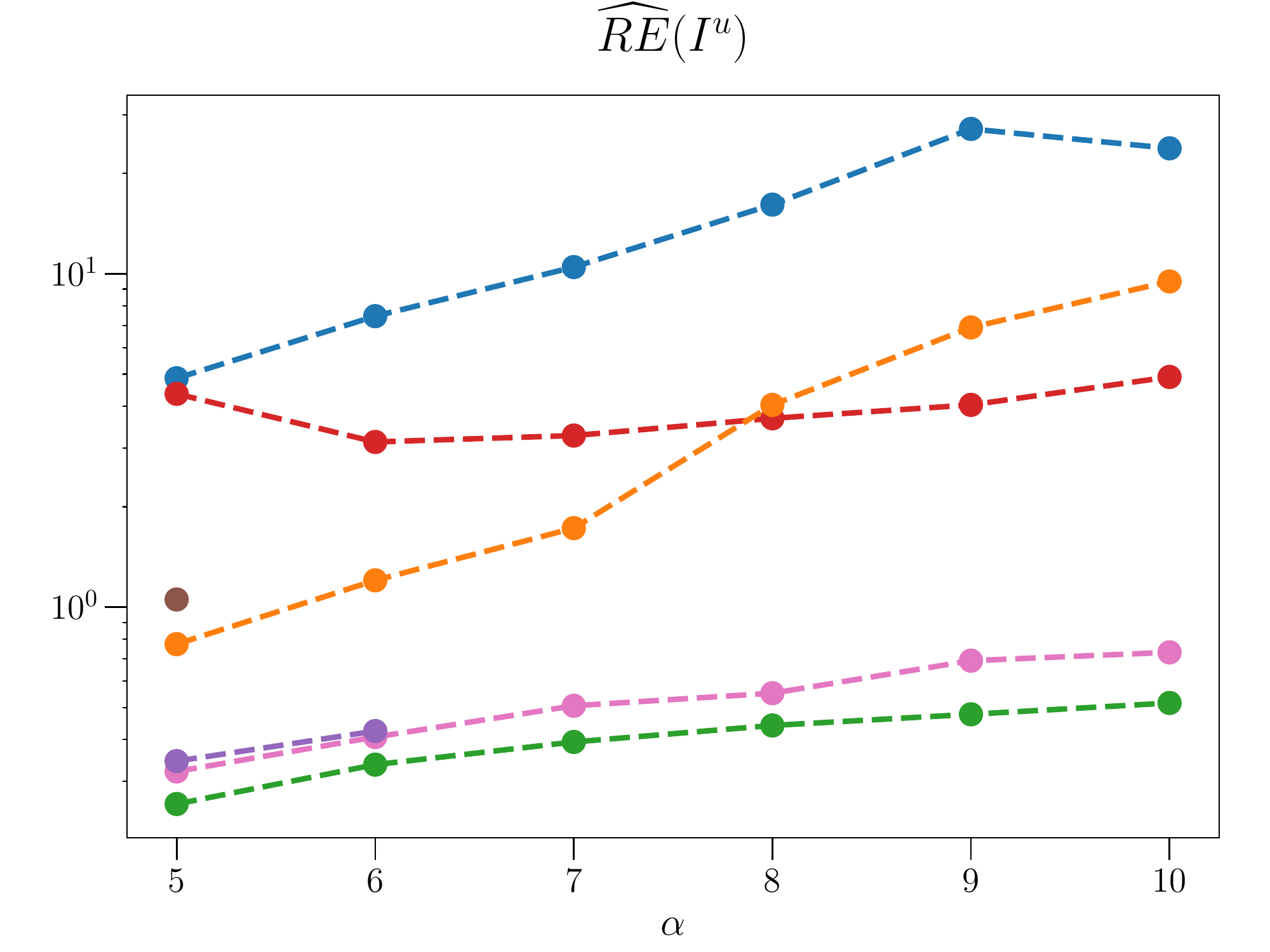}}
\caption{\textbf{(a)} Estimation $\widehat{\Psi}$ and \textbf{(b)} importance sampling relative error of the different methods described above.}
\label{fig: 1d psi and re}
\end{figure}

\subsection{Multi-dimensional extension of the double well potential}
Let us now repeat the above experiment for multidimensional problems. We start with $d = 2$, for which we can still compute a reference solution. 
This example is followed by an example in $d=4$ for which the PDE \eqref{eq: bvp psi} cannot be discretized anymore due to the curse of dimensionality. Here we compute a reference value of $\Psi$ by Monte Carlo estimation using a very large batch size.

\subsubsection*{Example in $\mathbf{d = 2}$}
For the $2d$ example we choose $\alpha = (5, 5)^\top$. As before we compute the metadynamic based initialization of the control according to \Cref{alg: one trajectory metadynamics} with time interval $\delta = 1.0$, weight $\eta = 1.0$, and covariance matrix $\Sigma = 0.5 \, \operatorname{Id}$. The resulting modified potential consist of $M=10$ unnormalized Gaussian functions. For the linear combination of ansatz functions we choose $p=100$ Gaussians, again placed on an equidistant grid in the domain $\mathcal{D}$.

Let us highlight two important aspects of the experiment. First, we see in \Cref{fig: 2d re and l2} that even after a runtime of $10^4$ seconds both Gaussian ansatz approximation attempts in contrast neural networks have not converged. We have observed in our experiments that the Gaussian approximation is sensible with respect to the number of ansatz functions, their placing in the considered domain, and the choice of the covariance matrix. Note that such hyperparameters do not have to be tuned for neural networks. Second, we can observe that the optimization initialized with the adapted metadynamics algorithm results in a faster convergence. In \Cref{fig: 2d re and l2} we see that our suggested approach needs only half the computational time to converge in comparison with the simulation using zero initialization. Note that the applied control yields shorter trajectories and thus reduced overall computational costs -- this can, for instance, be seen in \Cref{fig: 2d setting and nn control approximation}.

\begin{figure}[tbhp]
\centering
\subfloat[]{\label{fig: 8a}\includegraphics[width=60mm]{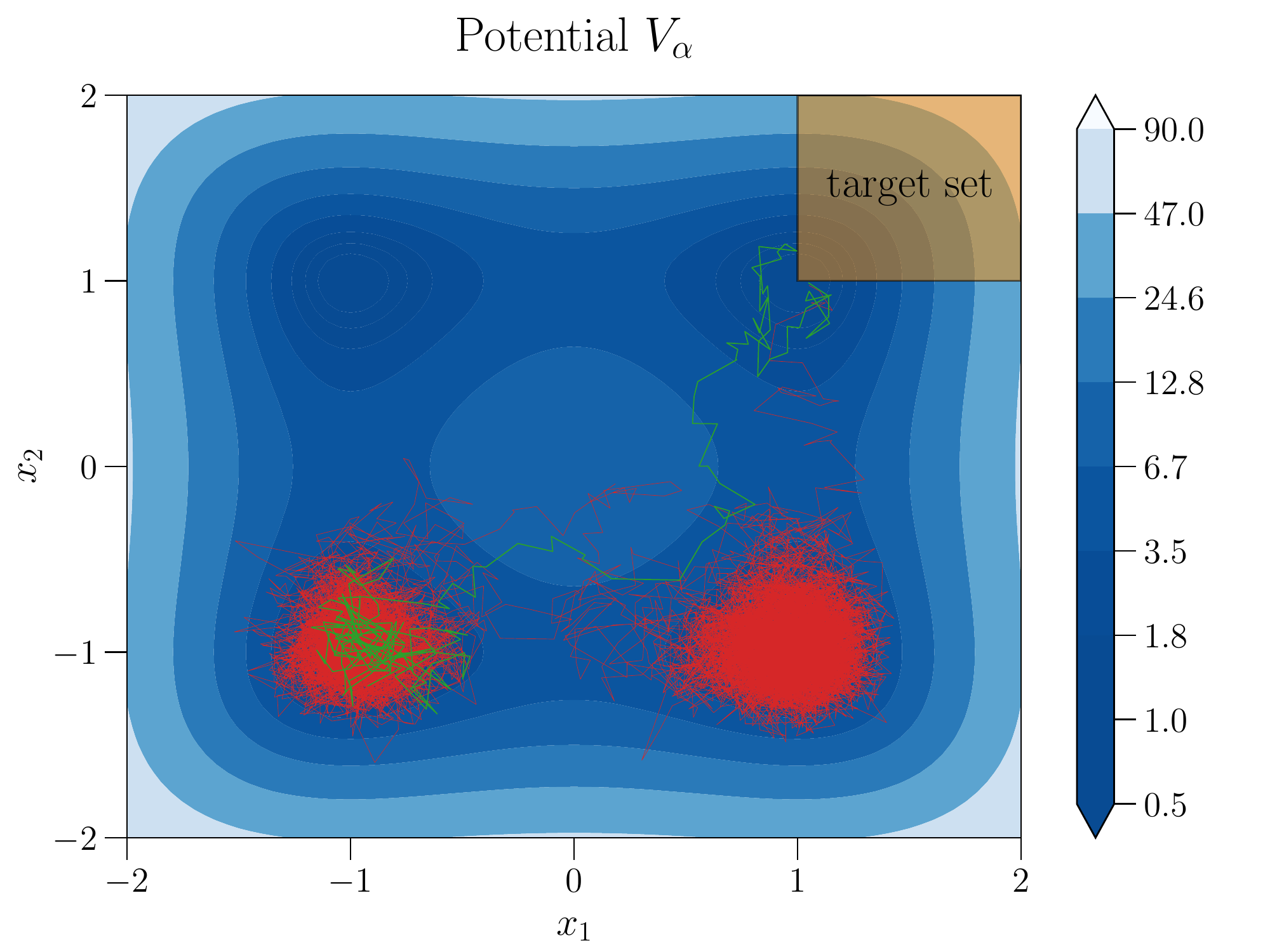}}
\subfloat[]{\label{fig: 8b}\includegraphics[width=60mm]{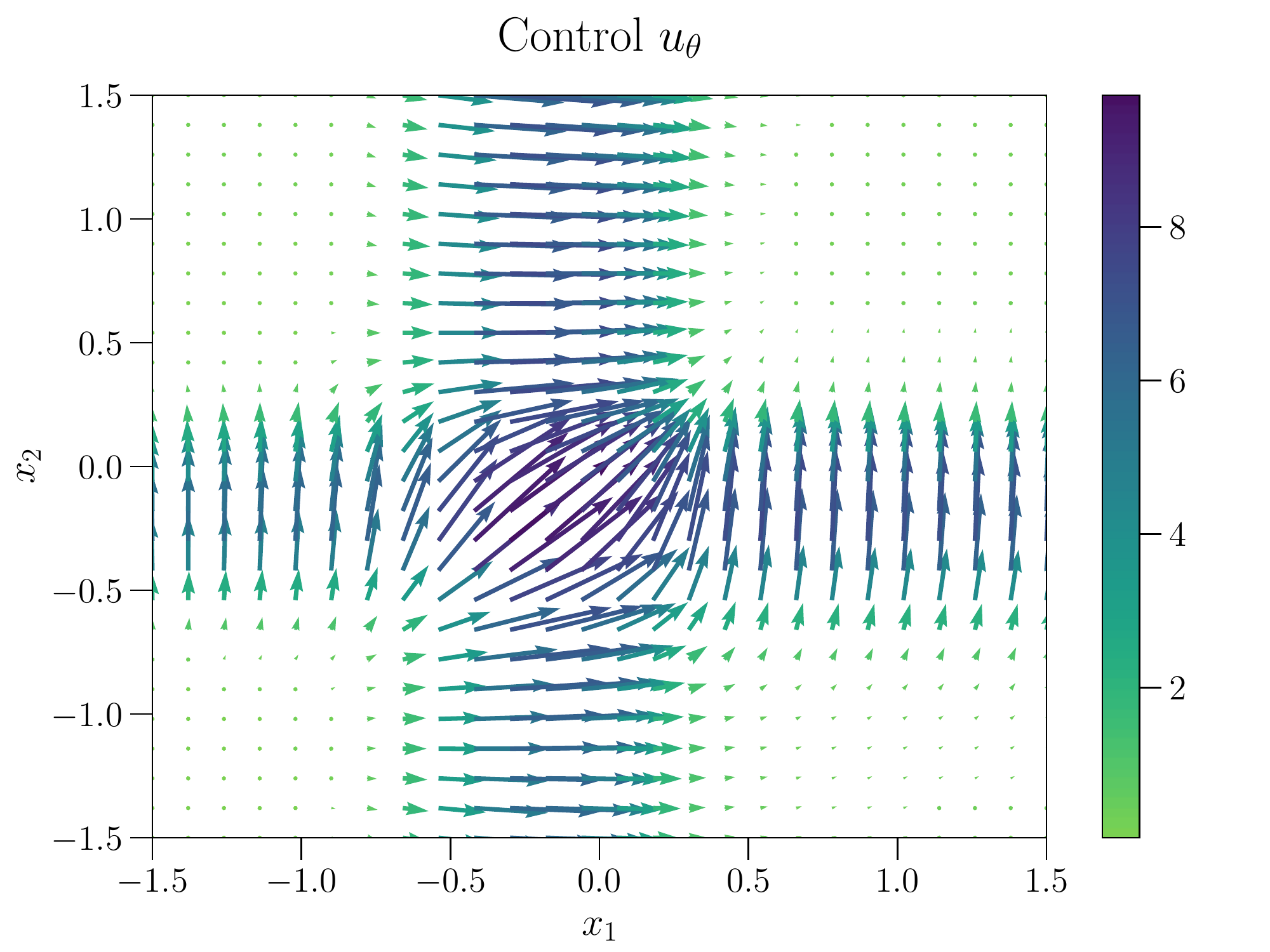}}
\caption{\textbf{(a)} The double well potential for $d=2$ has four minima at $x = (\pm 1, \pm 1)$ and a local maximum at $ x = (0, 0)$. The uncontrolled trajectory (in red) is long and gets trapped in two of the minima for a significant amount of time. The optimal importance sampling trajectory (in green) on the other hand is much shorter. \textbf{(b)} Control approximated by a neural network using \Cref{alg: Efficient importance sampling}.}
\label{fig: 2d setting and nn control approximation}
\end{figure}

\begin{figure}[tbhp]
\centering
\subfloat[]{\label{fig: 9a}\includegraphics[width=60mm]{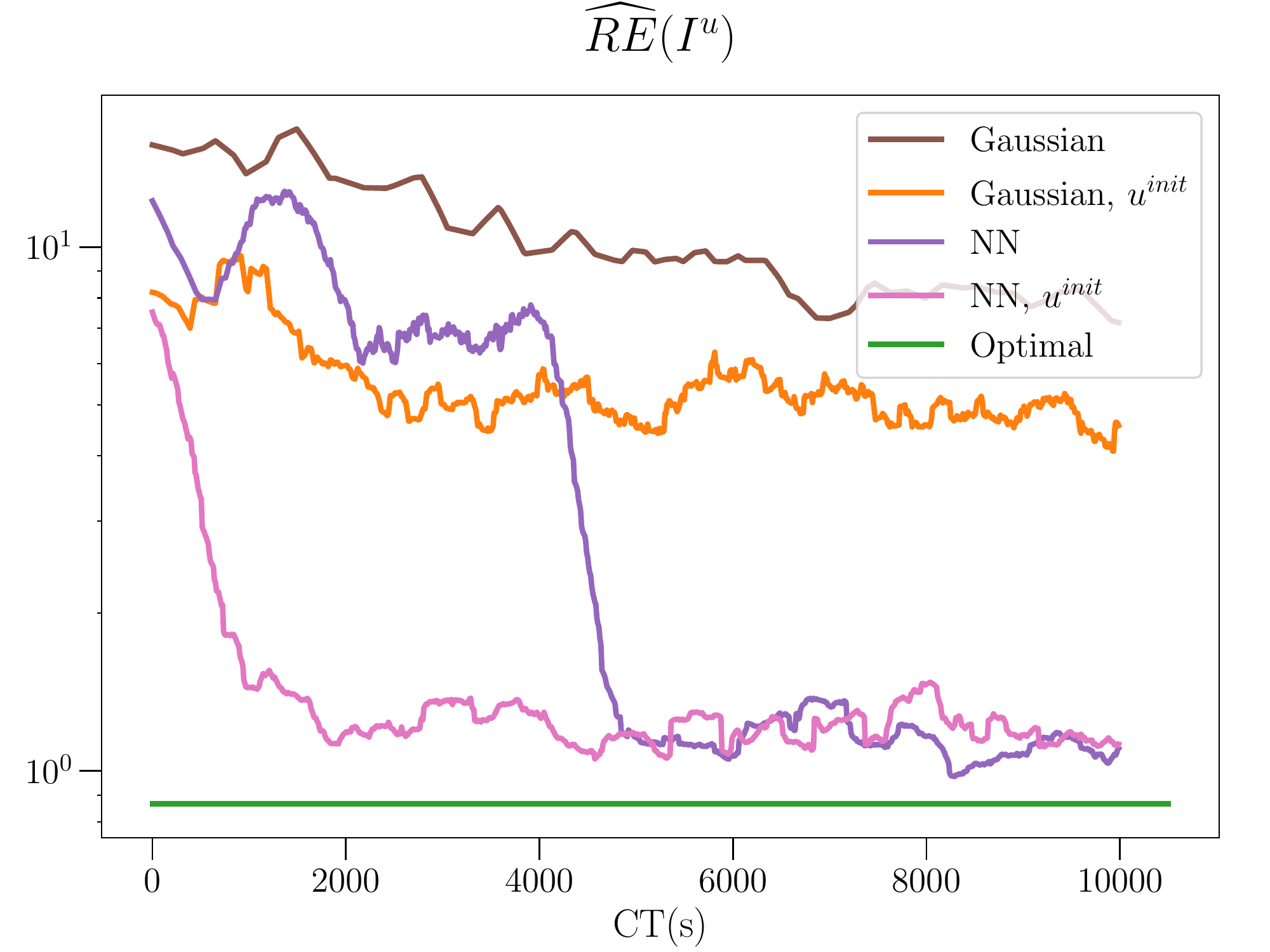}}
\subfloat[]{\label{fig: 9b}\includegraphics[width=60mm]{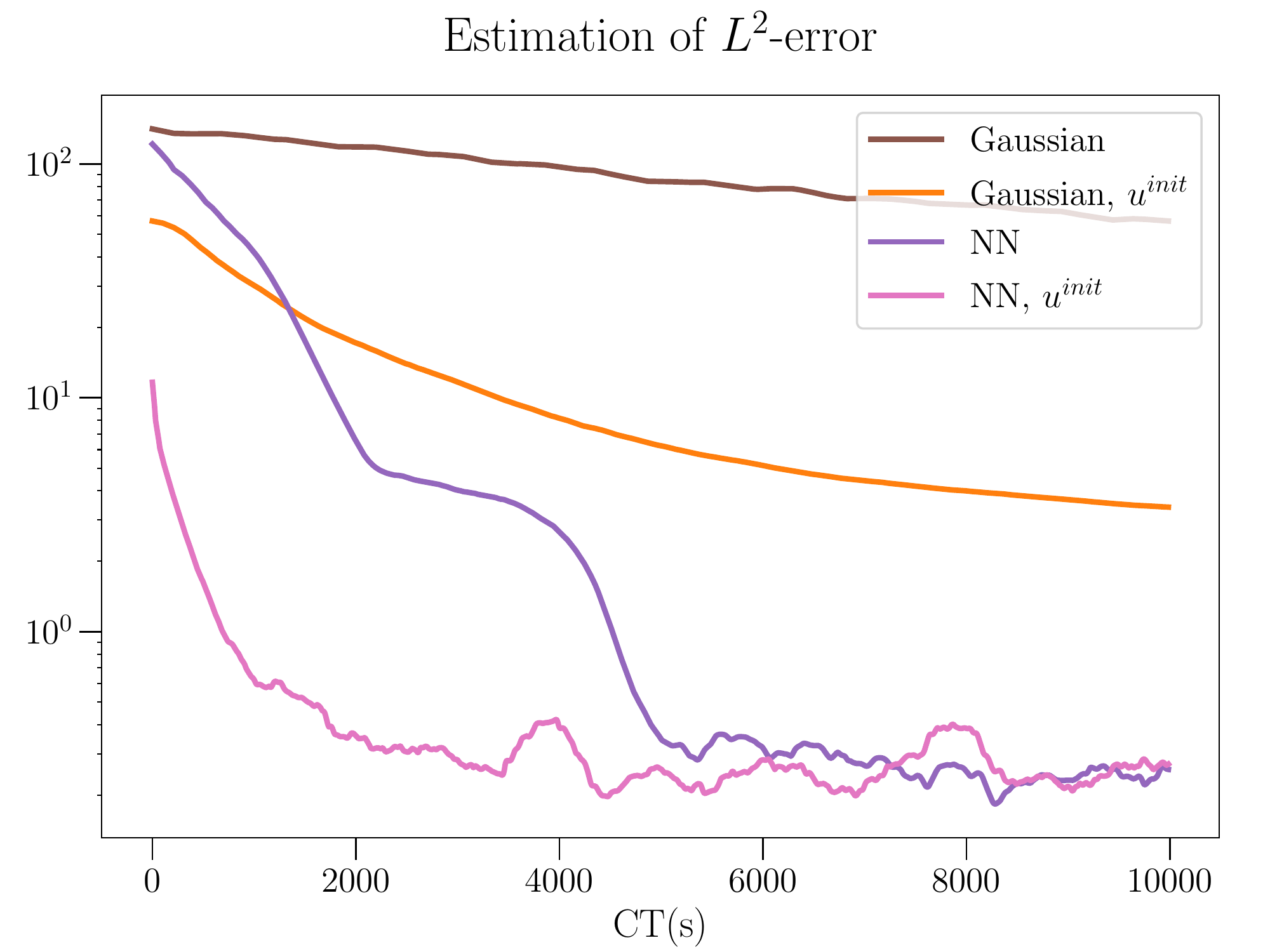}}
\caption{\textbf{(a)} Importance sampling relative error and \textbf{(b)} estimation of $L^2(u)$ as a function of the computation time.}
\label{fig: 2d re and l2}
\end{figure} 

\subsubsection*{Example in $\mathbf{d = 4}$}
Let us now consider an example in $d = 4$. We again use the potential stated in \eqref{eq: nd double well potential} and set $\alpha = (5, 5, 5, 5)^\top$. Note that the potential now has 16 minima. This time we rely on \Cref{alg: cumulative metadynamics} for our metadynamics based control initialization, for which we choose $\delta = 5$, $K^\text{meta}=100$, $\eta = 1$, $r=0.95$, and $\Sigma = 0.5 \operatorname{Id}$. The main advantage of the cumulative version of the adapted metadynamics algorithm is that we now rely on multiple trajectories for finding a good control initialization. This method is more robust and the trajectories explore a larger part of the domain. For the chosen parameters the resulting modified potential consists of $M=284$ unnormalized Gaussian functions. For $d \geq 4$ a relevant step is the initialization of the control with $u^\text{init}$ by minimizing the loss stated in \eqref{eq: approximation problem loss}. In such a case the support of the bias potential can be small in comparison with the considered domain and sampling the training data uniformly might not be feasible. Instead, we sample new training data for each gradient step following a normal distribution centered in the different unnormalized Gaussian which the chosen adapted version of the metadynamics algorithm has added. A total of $10^4$ gradient steps and $10^3$ sampled points for each gradient step suffice to provide a good initialization. \Cref{fig: 4d mean and re} shows the estimation of $\Psi$ provided by the neural network approximation with the abovementioned metadynamics based initialization as well as the relative error of the importance sampling estimator as a function of the gradient steps. As a reference value for $\Psi$ we take a Monte Carlo estimator that relies on $K=10^8$ trajectories.

\begin{figure}[tbhp]
\centering
\subfloat[]{\label{fig: 10a}\includegraphics[width=60mm]{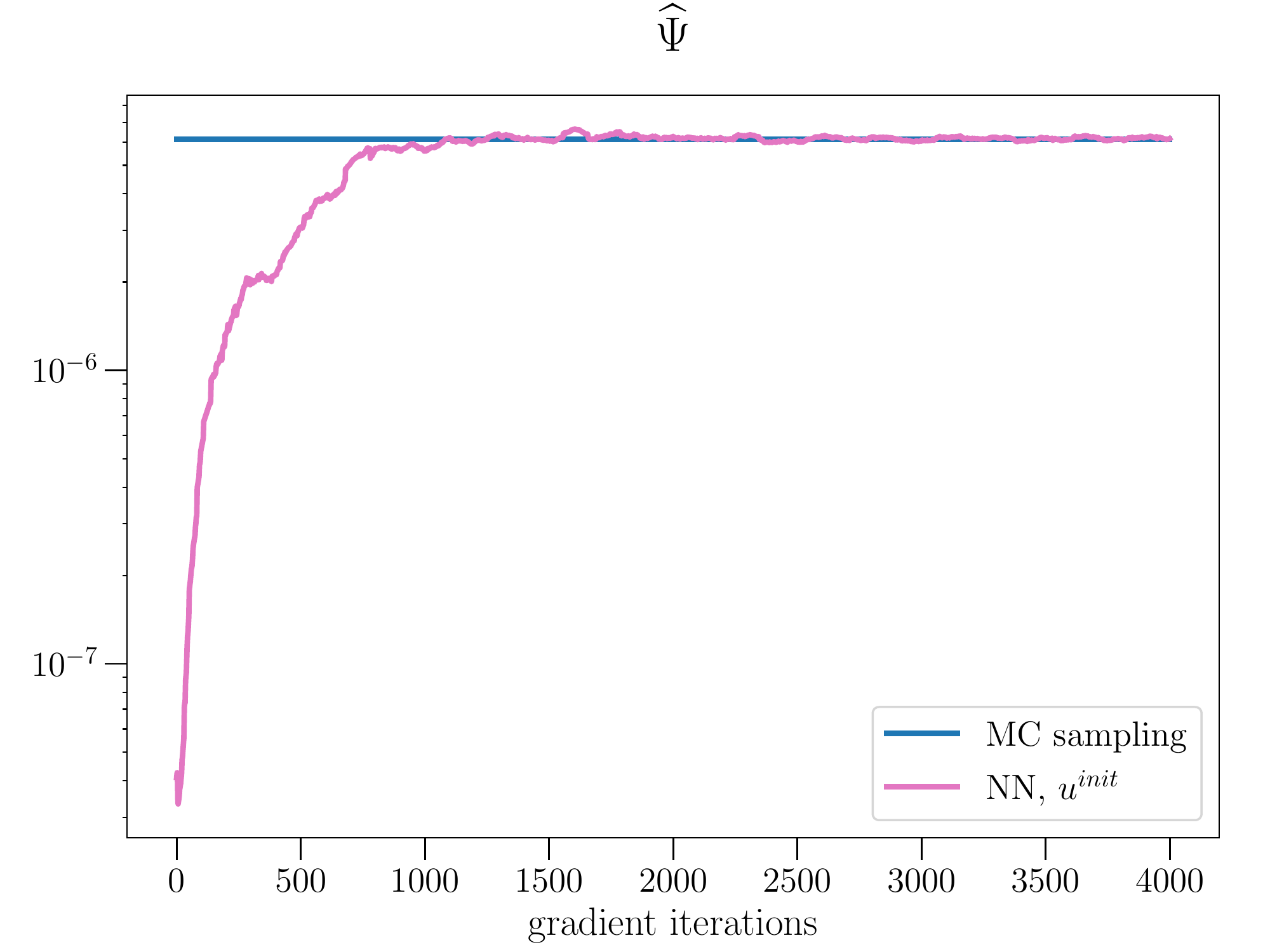}}
\subfloat[]{\label{fig: 10b}\includegraphics[width=60mm]{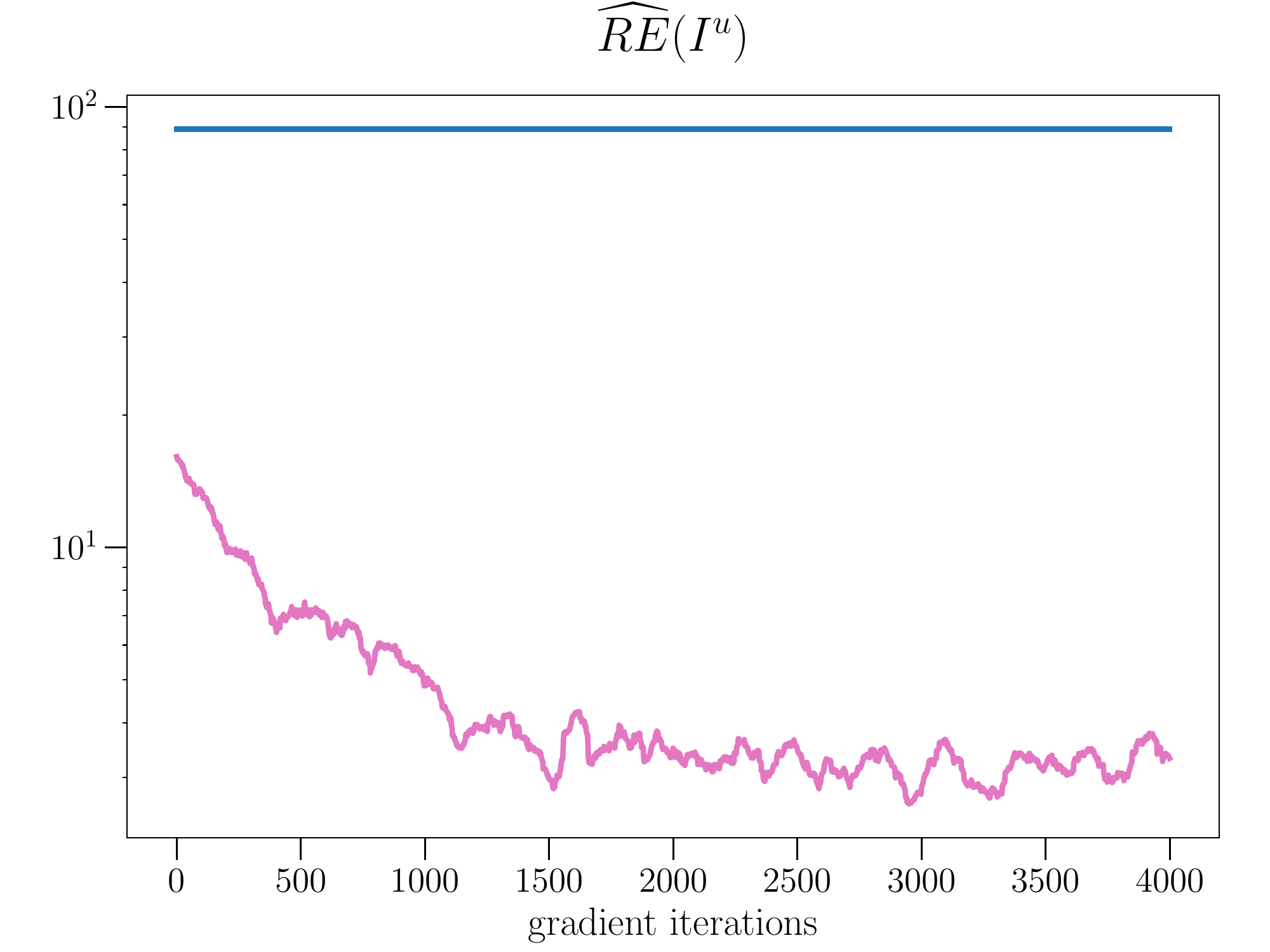}}
\caption{\textbf{(a)} Estimation of $\Psi$ and \textbf{(b)} the importance sampling relative error as a function of the gradient iterations.}
\label{fig: 4d mean and re}
\end{figure}

In our experiments, it turns out that we indeed need to rely on the metadynamics based control initialization, since the zero initialization does not work due to long trajectories and memory issues. Note that the target set is much smaller in comparison to the rest of the domain, which, together with the intrinsic metastability of the system, implies very long trajectories. In \Cref{fig: 4d mean and re} we notice that the suggested metadynamics based initialization converges and gives an accurate estimator with a smaller relative error. In \Cref{fig: 4d mc vs is} we observe that the estimation via naive Monte Carlo sampling requires the simulation of a huge amount of trajectories and is less accurate than the estimation via our suggested procedure. 

\begin{figure}[tbhp]
\centering
\subfloat[]{\label{fig: 11a}\includegraphics[width=60mm]{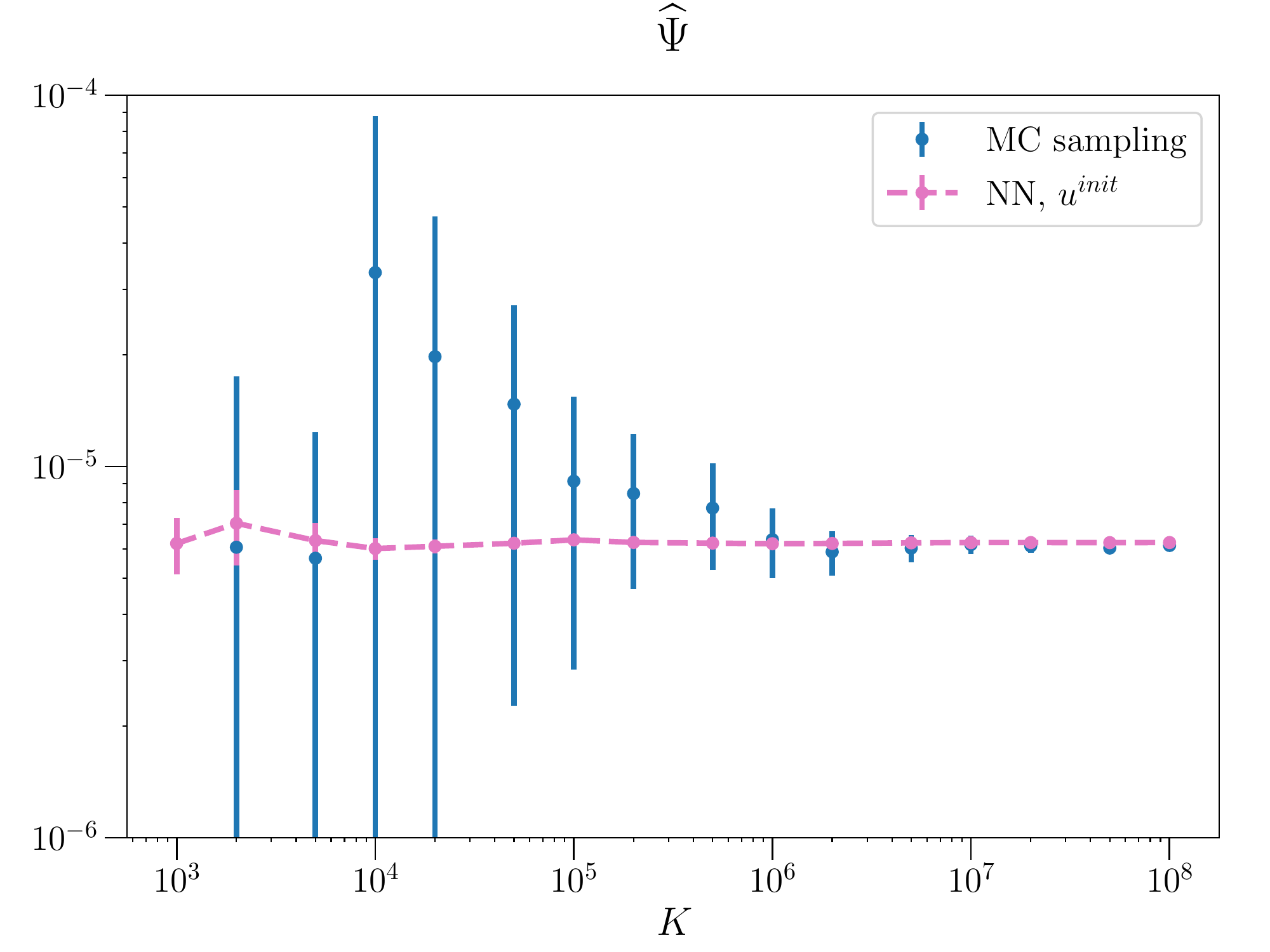}}
\subfloat[]{\label{fig: 11b}\includegraphics[width=60mm]{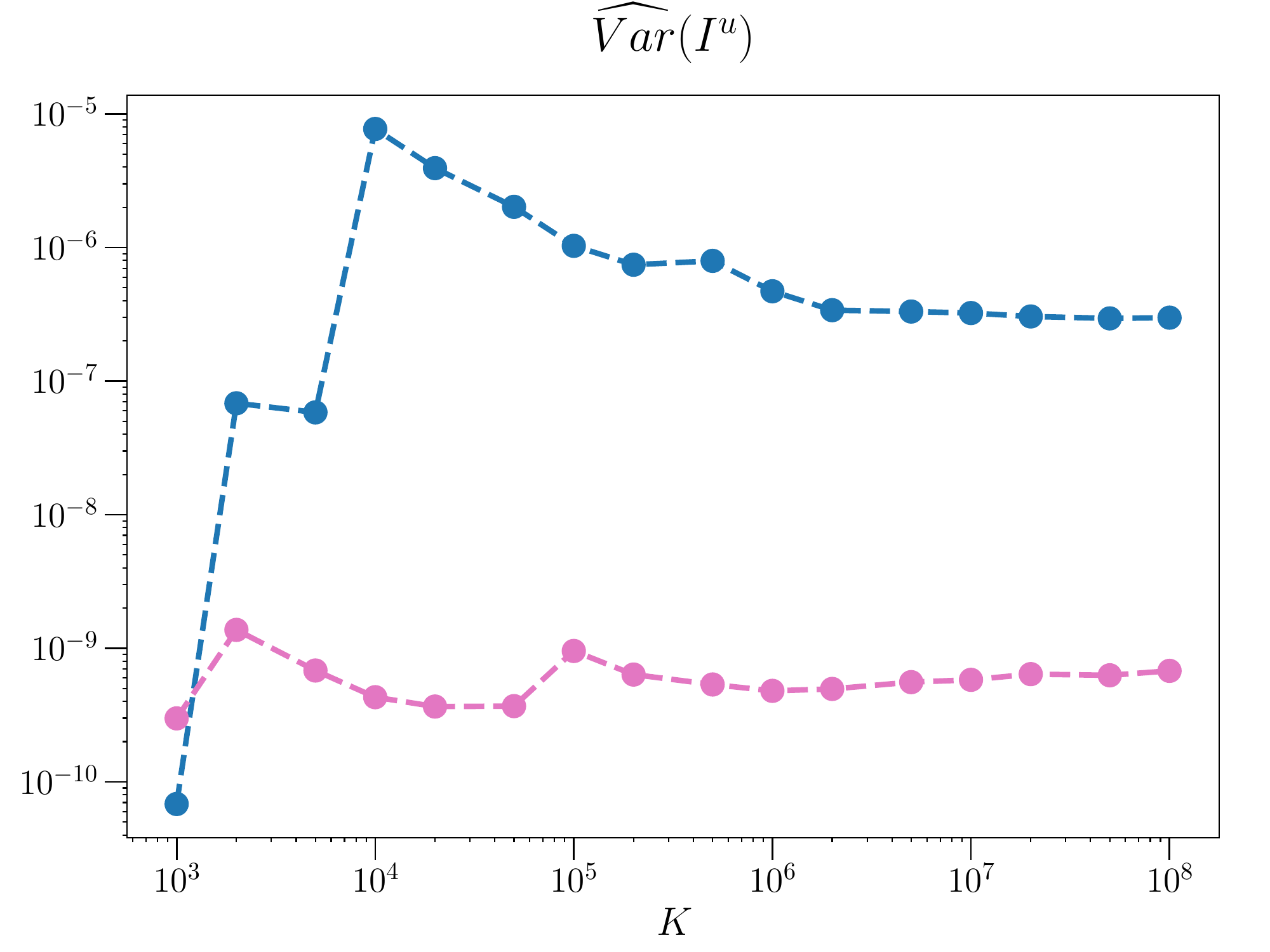}}
\caption{\textbf{(a)} Estimation of $\Psi$ as well as \textbf{(b)} the corresponding sampling variance when using either naive Monte Carlo or the learned importance sampling estimating according to \Cref{alg: Efficient importance sampling} as a function of different batch sizes $K$. Note that the Monte Carlo estimator corresponds to a noncontrolled sampling, i.e., $u=0$.}
\label{fig: 4d mc vs is} 
\end{figure}

\subsection{High-dimensional example with metastable features}
In our final example we consider a high-dimensional example in $d = 20$, where one dimension is particularly metastable. We set $\alpha_1 = 5$ and $\alpha_j=0.5$ for $j \in \{2, \dots, 20\}$ and we choose the target set to be \sloppy${\mathcal{T} = [1, 3] \times [-3, 3]^{d-1}}$, so that the trajectories stop after overcoming the potential barrier in the metastable coordinate. Here we again do not have a reference solution due to the curse of dimensionality. 

For this example we use two different approaches to generate a good control initialization $u^{\text{init}}$. First, we implement the cumulative version of the adapted metadynamics algorithm in the full state space using \Cref{alg: cumulative metadynamics}, with $\delta = 2$, $K^\text{meta}=100$, $\eta = 1$, $r=0.95$, and $\Sigma = 0.5 \operatorname{Id}$. Second, we use again \Cref{alg: cumulative metadynamics} with the same choice of parameters in a reduced collective variable space. The reaction coordinate is chosen to be the projection on the first coordinate, $\xi(x) = x_1$, since it describes the most important characteristics of the dynamics. For this choice, the corresponding controlled effective dynamics is again of Langevin type.

In our experiments we can see that the optimization procedure benefits from both metadynamics based initializations. 
In \Cref{fig: 20d mean and re metastable} we can observe that although all methods eventually find the same minimum, the two initialized versions converge much faster. Further, we notice that the initialization with reaction coordinates converges faster than the initialization relying on the full state space. 

\begin{figure}[tbhp]
\centering
\subfloat[]{\label{fig: 12a}\includegraphics[width=60mm]{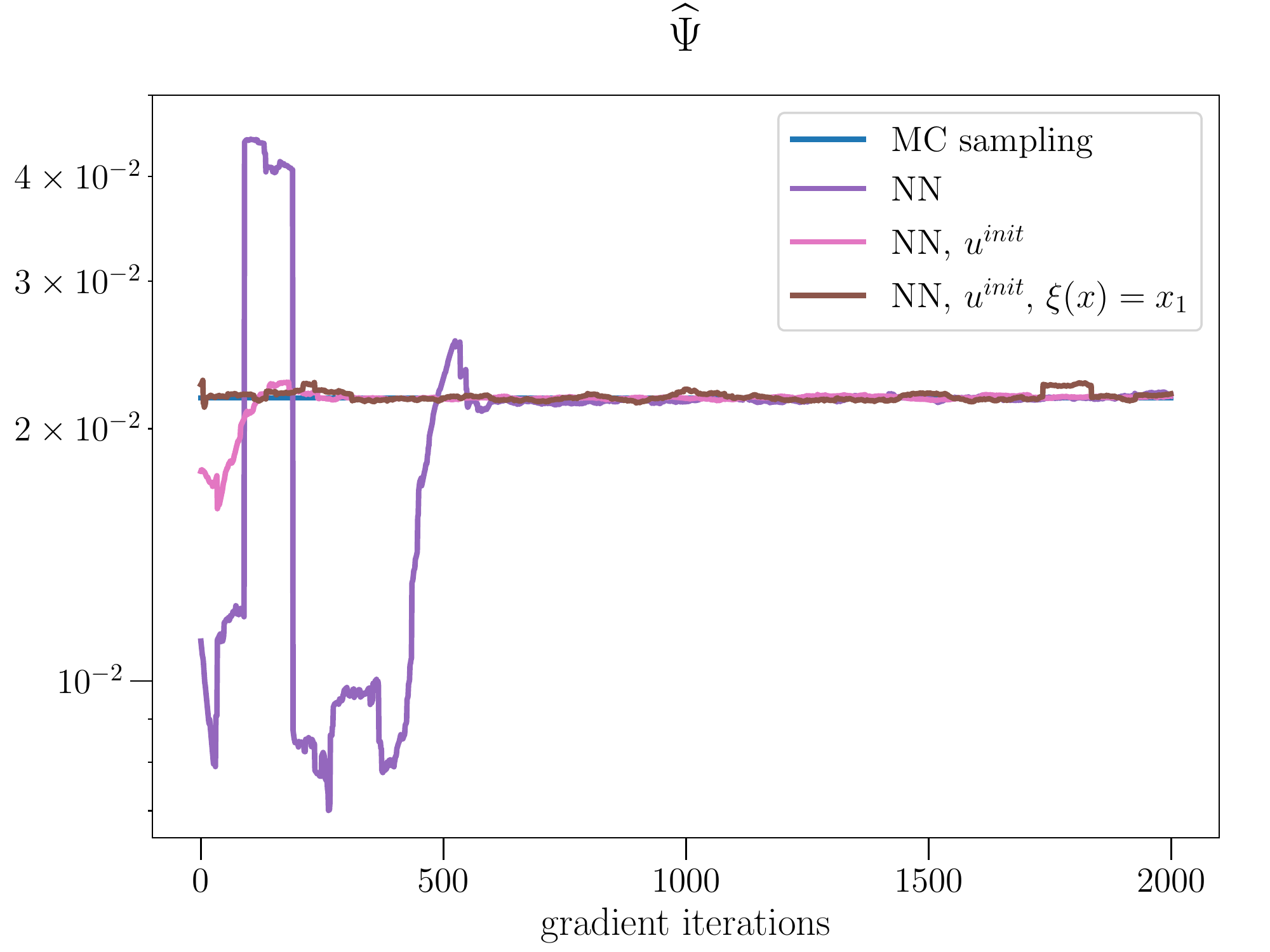}}
\subfloat[]{\label{fig: 12b}\includegraphics[width=60mm]{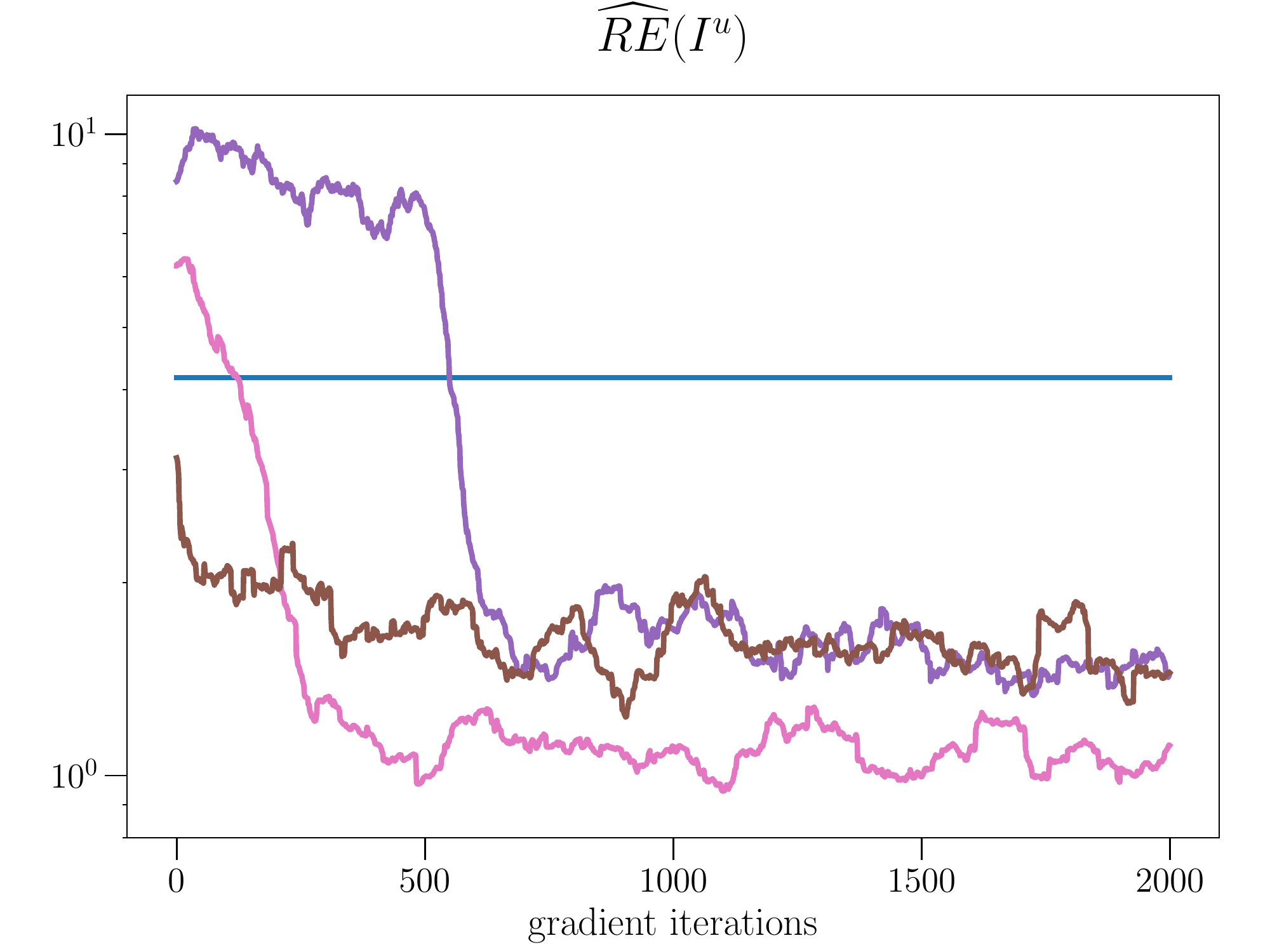}}
\caption{\textbf{(a)} Estimation of $\Psi$ and \textbf{(b)} importance sampling relative error at each gradient step.}
\label{fig: 20d mean and re metastable}
\end{figure}

\begin{figure}[tbhp]
\centering
\subfloat[]{\label{fig: 13a}\includegraphics[width=60mm]{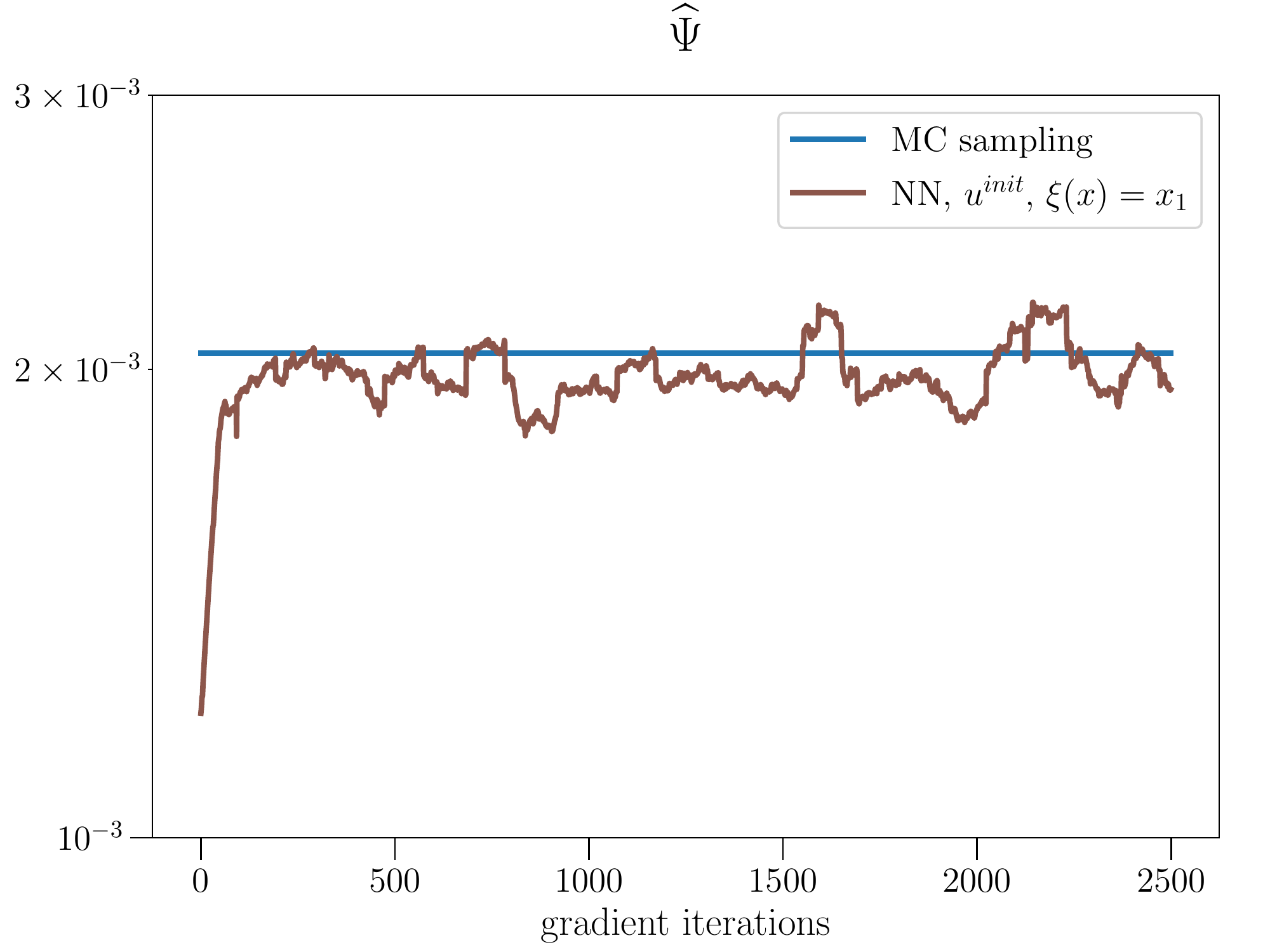}}
\subfloat[]{\label{fig: 13b}\includegraphics[width=60mm]{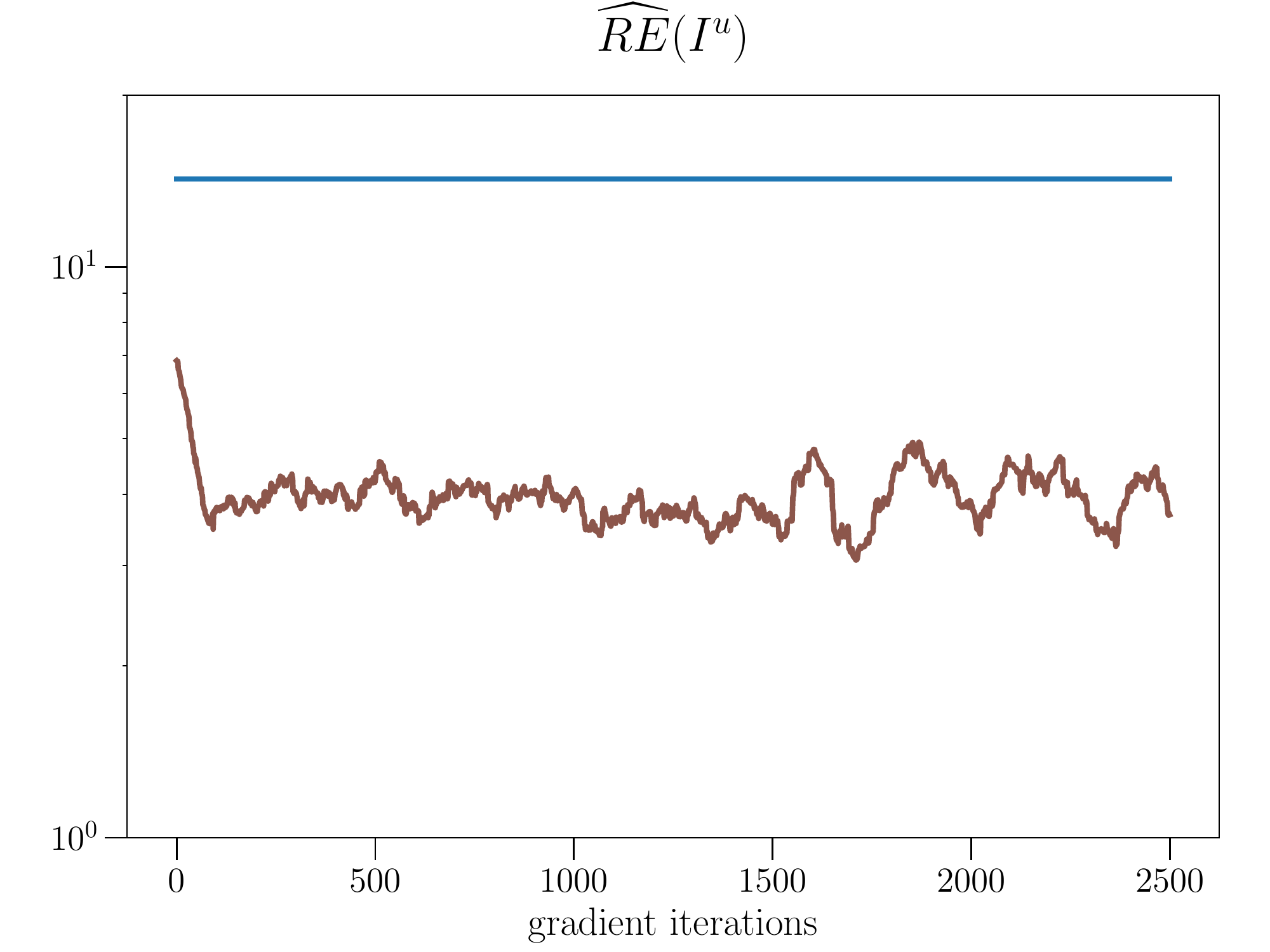}}
\caption{\textbf{(a)} Estimation of $\Psi$ and \textbf{(b)} importance sampling relative error at each gradient step.}
\label{fig: 20d mean and re high-metastable}
\end{figure}

Finally, we repeat the above experiments in an even more metastable setting, taking $\alpha_1 = 8$, with the same choice of hyperparameters. Our experiments reveal that both the noninitialized case as well as the metadynamics based initialization in the full state space fail due to the fact that the allocated memory is exceeded because of long trajectories. In \Cref{fig: 20d mean and re high-metastable} we can observe that the metadynamics based initialization in the collective variable space, however, provides an accurate estimator with smaller relative error than the plain Monte Carlo sampling. In \Cref{fig: 20d control slice meta init} we display the projection of the control in the $i$th coordinate as a function of $x_i$ for a fixed value of $x_j$, $j \in \{2, \dots, 20\}$, once before starting the optimization procedure and once after convergence. We compare the metastable direction $i=1$ with the others, e.g., $i=2$, and see that, as expected, the control gets particularly large in the metastable region. \pagebreak

\begin{figure}[tbhp]
\centering
\subfloat[]{\label{fig: 14a}\includegraphics[width=60mm]{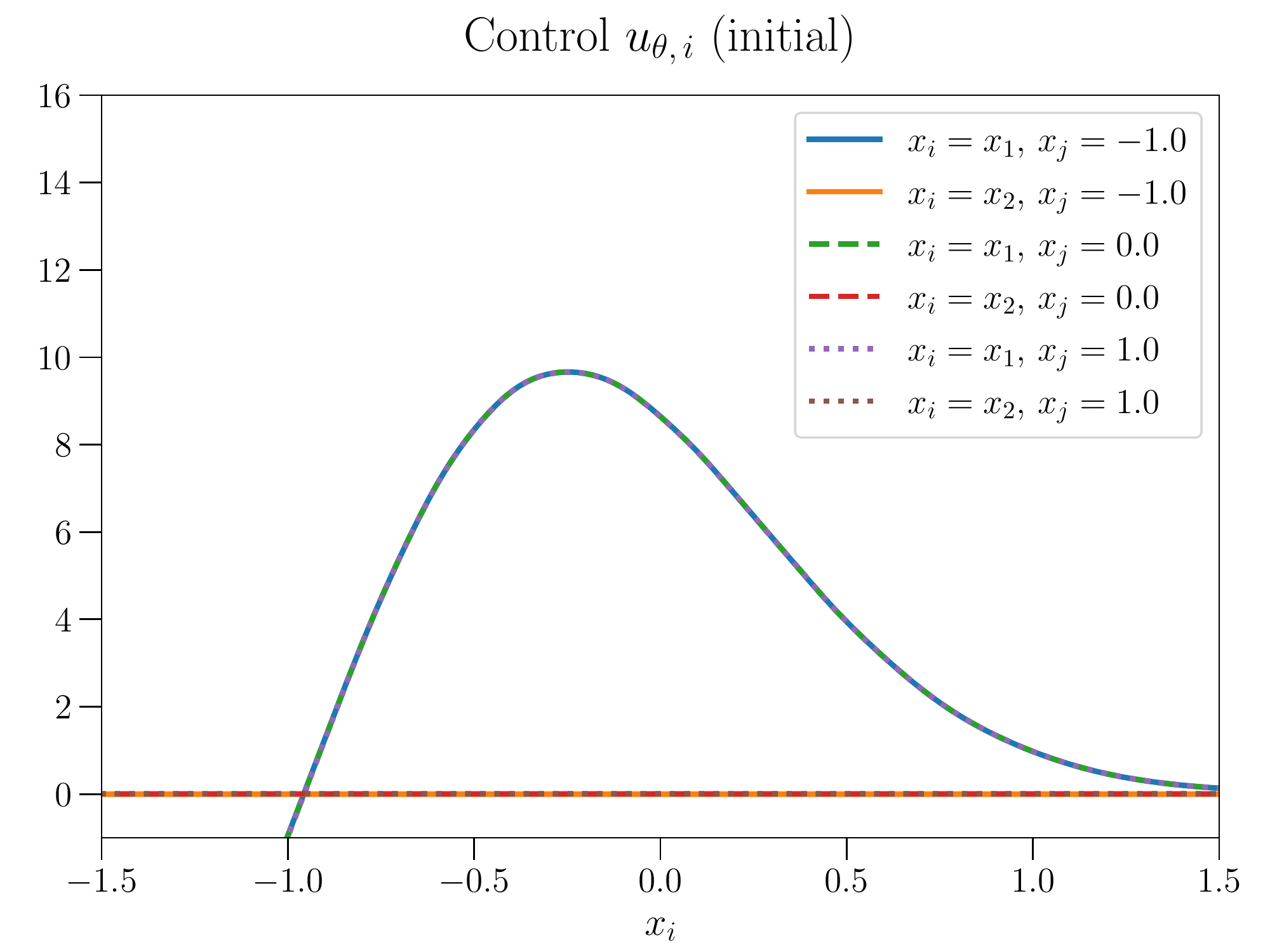}}
\subfloat[]{\label{fig: 14b}\includegraphics[width=60mm]{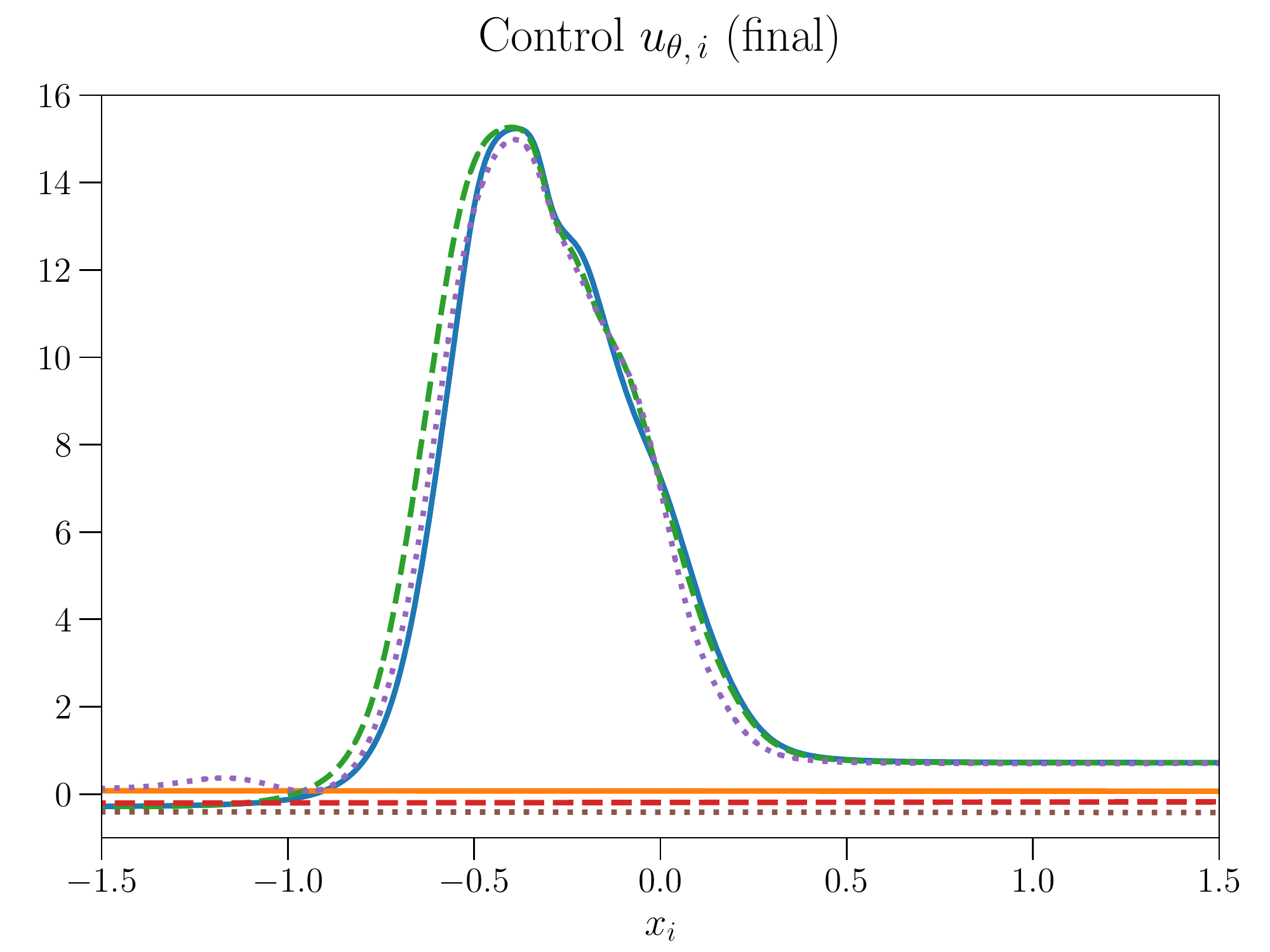}}
\caption{\textbf{(a)} Different components of the control function before the optimization procedure (coming from the metadynamics based initialization) and \textbf{(b)} after convergence.}
\label{fig: 20d control slice meta init}
\end{figure}

\section{Conclusion and outlook}
\label{sec: summary and outlook}
In this paper we have presented a novel method that improves the sampling of metastable diffusions. To be precise, this method is able to cope with two particular challenges, namely (1) the simulation of certain rare events of interest and (2) high variances in the gradient computation of importance sampling optimization algorithms. To overcome those issues, we have suggested combining optimal control based importance sampling with the metadynamics algorithm. In fact, we are able to combine the best of those two algorithms: offering reasonable initializations, but still striving for systematic variance reductions of Monte Carlo estimators. For the control function approximation we rely on neural networks, which allow for high-dimensional applications and offer further advantages in contrast to a linear combination of ansatz functions, since those would have to be placed explicitly in the domain of interest and would require the tuning of additional hyperparameters. We have further derived a gradient estimator that can easily be computed with automatic differentiation tools and thereby allows for efficient computations. We could demonstrate in multiple numerical experiments that our method improves the convergence of optimizing control based importance sampling significantly. In particular, we note that often our methods work well when alternative strategies do not produce reasonable results anymore. Overall, we are thus able to design low variance importance sampling estimators even in very metastable scenarios.

The stochastic optimization algorithms that we have considered in this work can be understood as some sort of reinforcement learning. For future work, it might therefore be interesting to consider tricks that have been developed in this fruitful field of machine learning in recent years. In particular, it might be promising to alter optimization objectives by, for instance, adding additional terms that strive for a minimization of the hitting times, while still keeping the variance of the estimators low. Such strategies might become particularly relevant in realistic examples in ever larger dimensions with even higher metastabilities. Furthermore, we believe that a connection of our algorithms to model reduction attempts might be fruitful such that (as in the original version of the algorithms) metadynamics based control initialization would only be executed in the relevant coordinates. With that we are optimistic that our proposed method will lead to more efficient sampling of real physical systems.

\section*{Acknowledgement}
The research of E.R.B and L.R. has been funded by Deutsche Forschungsgemeinschaft (DFG) through grant CRC 1114 ``Scaling Cascades in Complex Systems,'' A05 (Probing scales in equilibrated systems by optimal nonequilibrium forcing, project 235221301). The research of J.Q. has been funded by the Einstein Foundation Berlin. 

\section*{Code availability}
The code used for the numerical examples is available on GitHub at  \url{www.github.com/riberaborrell/sde-importance-sampling}.

\appendix
\section{Appendix}
\label{appendix}

\subsection{Alternative gradient computations}
\label{app: alternative gradient computations}

In this section we present an alternative way of computing the gradient of the control functional \eqref{eq: cost functional}, now relying on a discrete version of the controlled stochastic process. This gradient estimator has already been suggested in \cite{Hartmann2012}, however, we generalize it to more general function classes. The strategy is to consider the controlled process in discrete time, first with deterministic time horizons. Then, by relying on transition probabilities of the discretized process, we can compute the gradient of the discrete cost functional. Eventually, we can change to random stopping times, which yields a gradient that can be estimated by Monte Carlo.\\

Let us start by stating the discrete version of our controlled process \eqref{eq: controlled langevin sde} on a time grid $0 < t_1 < \cdots < t_N$, for a fixed $N$, namely

\begin{equation}
\label{eq: controlled langevin discretized sde}
\widehat{X}_{n+1}^u 
= \widehat{X}_{n}^u + \left(- \nabla V(\widehat{X}_n^u)  + \sigma(\widehat{X}_n^u) u_\theta(\widehat{X}_n^u)\right) \Delta t + \sigma(\widehat{X}_n^u) \xi_{n+1}  \sqrt{\Delta t} ,
\end{equation}

where $\Delta t = t_{n+1} - t_n$ is the time increment and $\xi_{n+1} \sim \mathcal{N}(0, \operatorname{Id})$ are standard normally distributed random variables. Moreover, the discrete version of the control functional \eqref{eq: cost functional} reads
\begin{equation}
\label{eq: discrete cost functional}
\widehat{J}(u_\theta ; x) \coloneqq \mathbb{E}^x [ h_\theta(\widehat{X}^u)],\qquad h_\theta(\widehat{X}^u ) \coloneqq \sum_{n=0}^{N-1} \Big( f(\widehat{X}^u_n) + \frac{1}{2} |u_\theta(\widehat{X}^u_n)|^2 \Big) \Delta t + g(\widehat{X}^u_{N}).
\end{equation}
The process $\widehat{X}^u$ is a discrete Markov process and by using the Chapman-Kolmogorov equation (see e.g. Section 2.2 in \cite{Pavliotis2014}) we can express its joint probability density function conditional on $\widehat{X}_0^u = x$ in terms of the transition densities $G: \mathbb{R}^d \times\mathbb{R}^d \to \mathbb{R}_{\ge 0}$
\begin{equation}
\label{eq: joint pdf}
\rho(\widehat{X}^u) = \prod_{n=0}^{N-1} G(\widehat{X}^u_{n+1} | \widehat{X}^u_n).
\end{equation}
From \eqref{eq: controlled langevin discretized sde} we know that for any discrete steps $x, y \in \mathbb{R}^d$, $G$ is of multivariate normal form, namely
\begin{equation}
    G(y | x) = \mathcal{N}\left(y; x+ \left(-\nabla V(x) + \sigma(x)u(x)\right)\Delta t, \sigma(x)\sigma^\top(x) \Delta t \right).
\end{equation}

Let us for computational convenience assume $\sigma(x) = \sqrt{2\beta^{-1}}\operatorname{Id}$, then

\begin{equation}
G(y | x) 
= \frac{1}{(4 \pi \beta^{-1} \Delta t)^{d/2}} \exp{\left( - \frac{\beta \Delta t } {4} \left |\frac{y - x}{\Delta t} + \nabla V(x)  - \sqrt{2\beta^{-1}} u_\theta(x) \right|^2 \right)}. \\
\end{equation}

By combining the transition densities in the above expression \eqref{eq: joint pdf}, we get
\begin{equation}
\label{eq: discrete path density}
\rho_\theta(\widehat{X}^u) = \frac{1}{\mathcal{Z}} \exp\left(-S_\theta(\widehat{X}^u) \right),
\end{equation}
where the so called discrete action and its normalization factor are given by
\begin{align}
\label{eq: discrete action}
S_\theta(\widehat{X}^u) &= \frac{\beta \Delta t}{4} \sum\limits_{n=0}^{N -1} \Big| \frac{\widehat{X}^u_{n+1} - \widehat{X}^u_n}{\Delta t} + \nabla V(\widehat{X}^u_n) - \sqrt{2 \beta^{-1}} \, u_\theta(\widehat{X}^u_n) \Big|^2, \\
\mathcal{Z}
&= (4 \pi \beta^{-1} \Delta t)^{N d/2}.
\end{align}

With the help of \textit{infinitesimal perturbation analysis} we can now obtain an estimator for the gradient of the above discretized loss function with respect to the parameter vector $\theta$.

\begin{proposition}[Derivative of discrete cost functional]
\label{prop: grad discrete cost functional}
Consider $h_\theta$ as defined in \eqref{eq: discrete cost functional}. We can compute the derivative of the discrete cost functional with respect to a parameter $\theta_i$ as
\begin{subequations}
\begin{align}
\label{eq: grad discrete cost functional}
\frac{\partial}{\partial \theta_i} \widehat{J}(u_\theta ; x) 
&= \mathbb{E}^x \biggl[\frac{\partial}{\partial \theta_i} h_\theta(\widehat{X}^u) - h_\theta(\widehat{X}^u)  \frac{\partial}{\partial \theta_i} S_\theta(\widehat{X}^u) \biggr],\\
\begin{split}
& = \mathbb{E}^x\biggl[\sum\limits_{n=0}^{N-1} u_\theta(\widehat{X}^u_n) \cdot \frac{\partial}{\partial \theta_i} u_\theta(\widehat{X}^u_n) \Delta t  \\
& \qquad\quad + \biggl(\sum_{n=0}^{N-1} \biggl( f(\widehat{X}^u_n) + \frac{1}{2} |u_\theta(\widehat{X}^u_n)|^2 \biggr) \Delta t + g(\widehat{X}^u_{N})\biggr)  \\
&\qquad\qquad\qquad \times \biggl(\sum\limits_{n=0}^{N-1} \xi_{k+1} \cdot \frac{\partial}{\partial \theta_i} u_\theta(\widehat{X}^u_n)\sqrt{\Delta t} \biggr)\biggr]
\end{split}
\end{align}
\end{subequations}
where the expectation is to be understood with respect to the discrete path measure with density $\rho$ as defined in \eqref{eq: discrete path density} and the discrete action $S$ is defined in \eqref{eq: discrete action}.
\end{proposition}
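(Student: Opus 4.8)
The plan is to treat $\widehat{J}(u_\theta;x)=\mathbb{E}^x[h_\theta(\widehat X^u)]$ as an integral over discrete paths $\widehat x=(\widehat x_1,\dots,\widehat x_N)\in(\mathbb{R}^d)^N$ against the density $\rho_\theta$ of \eqref{eq: discrete path density}, and to differentiate under the integral sign. This produces exactly the two contributions in \eqref{eq: grad discrete cost functional}: one from the explicit $\theta$-dependence of the integrand $h_\theta$, and one ``score-function'' term from the $\theta$-dependence of the path density. Concretely, from
\begin{equation*}
\widehat J(u_\theta;x)=\int_{(\mathbb{R}^d)^N} h_\theta(\widehat x)\,\rho_\theta(\widehat x)\,\mathrm d\widehat x
\end{equation*}
the product rule gives $\partial_{\theta_i}\widehat J=\int(\partial_{\theta_i}h_\theta)\,\rho_\theta\,\mathrm d\widehat x+\int h_\theta\,(\partial_{\theta_i}\rho_\theta)\,\mathrm d\widehat x$, and for the second integral I would use the log-derivative identity $\partial_{\theta_i}\rho_\theta=\rho_\theta\,\partial_{\theta_i}\log\rho_\theta$. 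Since $\log\rho_\theta=-S_\theta(\widehat x)-\log\mathcal Z$ by \eqref{eq: discrete path density} and the normalization $\mathcal Z$ does not depend on $\theta$, this equals $-\rho_\theta\,\partial_{\theta_i}S_\theta$; rewriting both integrals as expectations under $\rho_\theta$ then yields the first displayed line of the proposition.

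It remains to evaluate the two $\theta$-derivatives at a fixed path $\widehat x$, at which $\widehat x_n$ carries no $\theta$-dependence. From the definition of $h_\theta$ in \eqref{eq: discrete cost functional} only the running-cost terms $\tfrac12|u_\theta|^2$ depend on $\theta$, so $\partial_{\theta_i}h_\theta(\widehat X^u)=\Delta t\sum_{n=0}^{N-1}u_\theta(\widehat X^u_n)\cdot\partial_{\theta_i}u_\theta(\widehat X^u_n)$. For $S_\theta$ I would apply the chain rule to each squared norm in \eqref{eq: discrete action}: the inner $\theta$-derivative of $\tfrac{\widehat x_{n+1}-\widehat x_n}{\Delta t}+\nabla V(\widehat x_n)-\sqrt{2\beta^{-1}}\,u_\theta(\widehat x_n)$ is $-\sqrt{2\beta^{-1}}\,\partial_{\theta_i}u_\theta(\widehat x_n)$, while along a trajectory generated by \eqref{eq: controlled langevin discretized sde} that residual equals $\sqrt{2\beta^{-1}}\,\xi_{n+1}/\sqrt{\Delta t}$. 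Collecting the constant $\tfrac{\beta\Delta t}{4}\cdot 2\cdot\bigl(\sqrt{2\beta^{-1}}\bigr)^2/\sqrt{\Delta t}=\sqrt{\Delta t}$ gives $\partial_{\theta_i}S_\theta(\widehat X^u)=-\sqrt{\Delta t}\sum_{n=0}^{N-1}\xi_{n+1}\cdot\partial_{\theta_i}u_\theta(\widehat X^u_n)$. Substituting $\partial_{\theta_i}h_\theta$ and $-h_\theta\,\partial_{\theta_i}S_\theta$ into the first line and expanding $h_\theta$ via \eqref{eq: discrete cost functional} gives the second displayed line.

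The step I expect to be the main obstacle is the rigorous justification of differentiating under the ($Nd$-dimensional) integral sign, i.e.\ producing a $\theta$-integrable dominating function for $\partial_{\theta_i}(h_\theta\rho_\theta)$ on a neighbourhood of the parameter of interest. I would derive this from the standing assumptions that $u_\theta$ and its partial derivatives $\partial_{\theta_i}u_\theta$ lie in $\mathcal U$ — hence grow at most linearly in $x$, uniformly over a compact set of parameters — together with the Gaussian tails of the finite product $\rho_\theta$ and the fact that $N$ is fixed; these ensure that $h_\theta$, $\partial_{\theta_i}h_\theta$, $\partial_{\theta_i}S_\theta$ and their products with $\rho_\theta$ are integrable and admit the required locally uniform bounds. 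The remaining pieces — the chain-rule computation for $S_\theta$ and the identification of the Euler–Maruyama residual with $\sqrt{2\beta^{-1}}\,\xi_{n+1}/\sqrt{\Delta t}$ — are routine.
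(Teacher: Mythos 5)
Your proof follows the same route as the paper's: write $\widehat J$ as an integral of $h_\theta$ against the path density $\rho_\theta$, apply the product rule, use the score-function identity $\partial_{\theta_i}\rho_\theta=-\rho_\theta\,\partial_{\theta_i}S_\theta$ (exploiting $\theta$-independence of $\mathcal Z$), and substitute the chain-rule evaluations of $\partial_{\theta_i}h_\theta$ and $\partial_{\theta_i}S_\theta$ together with the Euler--Maruyama residual identification. Your explicit remarks on justifying differentiation under the integral (linear growth of $u_\theta$ and its derivatives, Gaussian tails, fixed $N$) go a bit beyond what the paper spells out, but the argument is the same.
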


\begin{proof}[Proof of \Cref{prop: grad discrete cost functional}]
We follow essentially the arguments of \cite{Hartmann2012} without restricting the choice of the space of possible controls to linear combination of vector fields related to Gaussian ansatz functions. First, let us compute the partial derivatives of the discrete control functional \eqref{eq: discrete cost functional} and the discrete action \eqref{eq: discrete action}, namely

\begin{align}
\label{eq: partial phi}
\frac{\partial}{\partial \theta_i}h_\theta(\widehat{X}^u) 
&= \Delta t \sum\limits_{n=0}^{N-1} u_\theta(\widehat{X}_n^u) \cdot \frac{\partial}{\partial \theta_i} u_\theta(\widehat{X}_n^u)
\end{align}
and
\begin{subequations}
\begin{align}
\label{eq: partial S}
\frac{\partial}{\partial \theta_i} S_\theta(\widehat{X}^u) 
&= - \frac{\sqrt{\beta} \Delta t}{\sqrt{2}} \sum\limits_{n=0}^{N-1} \left( \frac{\widehat{X}_{n+1}^u - \widehat{X}_n^u}{\Delta t} + \nabla V(\widehat{X}_n^u) - \sqrt{2 \beta^{-1}} \, u_\theta(\widehat{X}_n^u) \right)  \frac{\partial}{\partial \theta_i} u_\theta(\widehat{X}_n) \\
&= - \sqrt{\Delta t} \sum\limits_{n=0}^{N-1} \xi_{n+1} \frac{\partial}{\partial \theta_i} u_\theta(\widehat{X}_n^u) .
\end{align}
\end{subequations}
Let us write the expectation of the loss function as an integral over the state space
\[
\widehat{J}(u_\theta; x) 
= \mathbb{E}^x [ h_\theta(\widehat{X}^u)] 
= \int_{\mathbb{R}^d \times \dots \times \mathbb{R}^d} h_\theta \, \rho_\theta \, \mathrm dx_1 \dots \mathrm dx_N .
\]
Then, the partial derivative of the loss function with respect to $\theta_i$ can be computed like
\begin{equation}
\label{eq: partial loss}
\frac{\partial}{\partial \theta_i} \widehat{J}(u_\theta; x) 
= \int_{\mathbb{R}^d \times \dots \times \mathbb{R}^d} \left(\left(\frac{\partial}{\partial \theta_i} h_\theta \right) \rho_\theta + h_\theta \left( \frac{\partial}{\partial \theta_i} \rho_\theta \right) \right) \mathrm dx_1 \dots \mathrm dx_N .
\end{equation}
By using the fact that the normalization factor does not depend on the parameters the partial derivative of the probability density function with respect to $\theta_i$ simplifies to
\[
\frac{\partial}{\partial \theta_i}  \rho_\theta
= \frac{\partial}{\partial \theta_i} \left( \frac{1}{Z} \exp{(- S_\theta)} \right)
= \frac{1}{Z} \frac{\partial}{\partial \theta_i}  \exp{(-S_\theta)}
= - \left( \frac{\partial}{\partial \theta_i}  S_\theta \right) \rho_\theta,
\]
and the expression \eqref{eq: partial loss} finally reads
\begin{align*}
\frac{\partial}{\partial \theta_i} \widehat{J}(u_\theta; x)
&= \int_{\mathbb{R}^d \times \dots \times \mathbb{R}^d} \left( \left( \frac{\partial}{\partial \theta_i} h_\theta - h_\theta \left( \frac{\partial}{\partial \theta_i}  S_\theta \right) \right) \rho_\theta \right) \mathrm dx_1 \dots \mathrm dx_N \\
&= \mathbb{E}^x \left[\frac{\partial}{\partial \theta_i} h_\theta - h_\theta \left( \frac{\partial}{\partial \theta_i}  S_\theta \right) \right].
\end{align*}
\end{proof}

As mentioned before, \Cref{prop: grad discrete cost functional} holds for the case of a fixed time horizon. Let us now replace fixed times by random stopping times of the controlled dynamics, namely $\tau^u \coloneqq \inf \{s > 0 \mid X_s^u \in \mathcal{T}\}$. Note that this stopping time now depends on the control $u$ (and therefore on the parameter $\theta$) and one could be tempted to incorporate this dependency in the gradient computations. However, we have seen in \Cref{cor: gradient of cost functional} that the derived gradient is in fact exact.

\subsection{Girsanov's theorem}
\label{app: girsanov}

Girsanov's theorem \cite{Oksendal2003} provides a formula for changes of measures in path space, which are relevant for our importance sampling computations. Let us therefore provide a brief summary of the theorem.  

Let $\widetilde{\Omega} = C([0, \infty), \mathbb{R}^d)$ be the space of continuous paths equipped with the supremum norm and let $\mathcal{F} = \mathcal{B}(\widetilde{\Omega})$ denote the corresponding $\sigma$-algebra. First, we define $(M_t^u)_{0 \leq t \leq T}$ by
\begin{equation}
\label{eq: definition girsanov martingale}
M_t^u \coloneqq \exp{\left(-  \int_0^t u(X_s^u) \cdot \mathrm dW_s - \frac{1}{2} \int_0^t |u(X_s^u)|^2 \mathrm ds \right)}, \quad M_0^u \coloneqq 0,
\end{equation}
where $X^u$ is the controlled process following \eqref{eq: controlled langevin sde} and $(\widetilde{W}_t)_{0 \leq t \leq T}$ is a Brownian Motion with additional drift, determined for all $t \in [0, T]$ by
\[
\widetilde{W}_t \coloneqq W_t + \int_0^t u(X_s^u) \mathrm{d}s. 
\]
If $(M_t^u)_{0 \leq t \leq T}$ is a martingale w.r.t. the canonical filtration of the Brownian motion $(W_t)_{0 \leq t \leq T}$ then the Girsanov Theorem \cite[Thm 8.6.8]{Oksendal2003} states that there exists a probability measure $\mathbb{Q}$ absolutely continuous w.r.t.\ the original probability measure $\mathbb{P}$ characterized by $M_T^u = \frac{\mathrm d\mathbb{Q}}{\mathrm d\mathbb{P}}$, i.e. for all $A \in \mathcal{F}$
\[
\mathbb{Q}(A) = \mathbb{E}_\mathbb{Q}[\ind_A] = \mathbb{E}_\mathbb{P}[ M_T^u \ind_A], 
\]
such that $(\widetilde{W}_t)_{0 \leq t \leq T}$ is a Brownian motion with respect to $\mathbb{Q}$ and $(X_t^u, \widetilde{W}_t)$ is a weak solution of \eqref{eq: langevin sde}, i.e.
\[
\text{$\mathbb{Q}$-law of $(X_t^u)_{0 \leq t \leq T}$} = \text{$\mathbb{P}$-law of $(X_t)_{0 \leq t \leq T}$}.
\]

Notice that for applying Girsanov's theorem, one has to assume that the process \eqref{eq: definition girsanov martingale} is a martingale. Novikov's condition provides us with a sufficient requirement for stochastic processes of the form \eqref{eq: definition girsanov martingale} to be a martingale, see \cite{Nobuyuki1989}. Namely, it suffices that for all $t \in [0, T]$ 
\[
\mathbb{E}^x\left[\exp{\left(\frac{1}{2} \int\limits_0^t |u(X_s^u)|^2 \mathrm ds \right)}\right] < \infty.
\]

Girsanov's theorem can be extended to bounded stopping times (see \cite[Prop 1]{Quer2018}). If the stopping time is bounded the fulfillment of Novikov's condition has already been discussed in \cite{Lelievre2016}, \cite{Quer2018}. In this case, it holds that
\begin{equation}
\label{eq: girsanov stopping times}
\mathbb{E}^{x}_{\mathbb{P}}[\exp{(- \mathcal{W}(X))}]
= \mathbb{E}_{\mathbb{Q}}^{x}[\exp{(- \mathcal{W}(X^u))}] 
= \mathbb{E}_{\mathbb{P}}^{x}[\exp{(- \mathcal{W}(X^u))} M_{\tau^u}^u],
\end{equation}
where $\mathbb{Q}$ is the Wiener measure of the Brownian motion with drift, $\widetilde{W}$. This implies that the random variable $I^u: \widetilde{\Omega} \rightarrow \mathbb{R}$ given by
\begin{align}
\label{eq: is quantity of interest}
I^u 
&= \exp{(- \mathcal{W}(X^u))} M_{\tau^u}^u \\
&= \exp{\Big(
- \int_0^{\tau^u} f(X_s^u) \mathrm ds 
- g(X_{\tau^u}^u)
-  \int_0^{\tau^u} u(X_s^u) \cdot \mathrm dW_s 
-  \frac{1}{2} \int_0^{\tau^u} |u(X_s^u)|^2 \mathrm ds 
\Big)}
\end{align}
is equivalent to our quantity of interest $I$, as defined in \eqref{eq: quantity of interest}. We call the quantity $I^u$ the (re-weighted) importance sampling quantity of interest.

\subsection{Proofs}
\label{app: proofs}

\begin{proof}[Proof of \Cref{prop: gateaux derivative of control functional}]
The proof is adapted from \cite{Nusken2021solving} and we refer to a similar computation in \cite{Fournie1999applications} and to further technical details in \cite{Lie2016thesis}.

For $\varepsilon \in \mathbb{R}$ and $\phi \in C_b^1(\mathbb{R}^d, \mathbb{R}^d)$, let us define the change of measure
\begin{equation}
\Lambda_\tau(\varepsilon,\phi) = \exp \left( -\varepsilon \int_0^{\tau^u} \phi(X^u_s) \cdot \mathrm{d}W_s - \frac{\varepsilon^2}{2} \int_0^{\tau^u} \vert \phi(X^u_s)\vert^2 \, \mathrm{d}s\right), \qquad \frac{\mathrm{d}\mathbb{Q}}{\mathrm{d}\mathbb{P}} = \Lambda_\tau(\varepsilon,\phi).
\end{equation}
According to Girsanov's theorem, the process $(\widetilde{W}_t)_{0 \le t \le T}$, defined as
\begin{equation}
\widetilde{W}_t = W_t + \varepsilon \int_0^t \phi(X^u_s) \, \mathrm{d}s,
\end{equation}
is a Brownian motion under $\mathbb{Q}$. We therefore obtain
\begin{subequations}
\begin{align}
J(u + \varepsilon \phi;x)
& = \mathbb{E}^x \left[ \left( \frac{1}{2} \int_0^{\tau^{u+ \varepsilon \phi}}\vert (u + \varepsilon \phi) (X_s^{u+ \varepsilon \phi}) \vert^2 \, \mathrm{d}s + \int_0^{\tau^{u+ \varepsilon \phi}} f(X_s^{u+ \varepsilon \phi})\, \mathrm ds + g(X_{\tau^{u+ \varepsilon \phi}}^{u+ \varepsilon \phi})\right)  \right] \\
& = \mathbb{E}^x \left[ \left( \frac{1}{2} \int_0^{\tau^{u}} \vert (u + \varepsilon \phi) (X_s^u) \vert^2 \, \mathrm{d}s + \int_0^{\tau^{u}} f(X_s^u)\, \mathrm ds + g(X_{\tau^{u}}^u)\right) \Lambda^{-1}_\tau(\varepsilon,\phi) \right].
\end{align}
\end{subequations}
Using dominated convergence, we can interchange derivatives and integrals (for technical details, we refer to \cite{Lie2016thesis}) and compute
\label{eq: variation RE}
\begin{align}
\begin{split}
\frac{\mathrm{d}}{\mathrm{d} \varepsilon} \Big\vert_{\varepsilon = 0} J (u + \varepsilon \phi;x) 
& = \mathbb{E}^x \biggl[\int_0^{\tau^{u}} (u\cdot \phi)(X_s^u) \, \mathrm{d}s \\
& \qquad\quad + \biggl(\frac{1}{2} \int_0^{\tau^{u}} \vert u (X_s^u) \vert^2 \, \mathrm{d}s + \int_0^{\tau^{u}} f(X_s^u)\, \mathrm ds + g(X_{\tau^{u}}^u)\biggr) \\
& \qquad\qquad\quad \times \int_0^{\tau^{u}}\phi(X_s^u) \cdot \mathrm{d}W_s \biggl].
\end{split}
\end{align}
\end{proof}

\printbibliography

\end{document}